
\documentclass[reqno]{amsart}
\usepackage{amsmath}
\usepackage{amsfonts}
\usepackage{graphics}
\usepackage{epsfig}
\usepackage{amssymb}
\usepackage{amscd}
\usepackage[all]{xy}
\usepackage{latexsym}
\usepackage{graphicx}
\usepackage{multirow}
\usepackage{geometry}
\usepackage{color}
\usepackage[usenames,dvipsnames]{pstricks}
\usepackage{epsfig}
\usepackage{pst-grad} 
\usepackage{pst-plot}
\usepackage{textcomp} 


\newtheorem{thm}{Theorem}[section]
\newtheorem{cor}[thm]{Corollary}

\newtheorem{prop}[thm]{Proposition}

\newtheorem{quest}{Question}[section]

\theoremstyle{definition}
\newtheorem{Def}[thm]{Definition}
\newtheorem{rem}[thm]{Remark}

\newtheorem*{ack}{Acknowledgements}
\newtheorem{ex}[thm]{Example}

\numberwithin{equation}{section}
\numberwithin{figure}{section}

\textwidth = 6in
\oddsidemargin = 0.25in
\evensidemargin=0.25in
\textheight = 8.7in
\topmargin = -0.2in


\begin{document}
\large
\setcounter{section}{0}

\allowdisplaybreaks


\title[Topological Recursion for Generalized $bc$-Motzkin Numbers]{Topological Recursion for Generalized $bc$-Motzkin Numbers}


\author[C.\ Jacob]{Cooper Jacob}
\address{Cooper Jacob:
Department of Mathematics\\
University of California\\
Davis, CA, USA, 95616}
\email{cooperjacob@math.ucdavis.edu}


\begin{abstract}

We present a higher genus generalization of $bc$-Motzkin numbers, which are themselves a generalization of Catalan numbers, and we derive a recursive formula which can be used to calculate them. Further, we show that this leads to a topological recursion which is identical to the topological recursion that had previously been proved for generalized Catalan numbers, and which is an example of the Eynard-Orantin topological recursion.

\end{abstract}


\subjclass[2020]{Primary: 05C15, 05C30; Secondary: 14H15.}

\keywords{Generalized Motzkin numbers; bc-Motzkin numbers; Motzkin numbers; Catalan numbers; recursion; differential recursion; topological recursion; combinatorics; graph coloring; graph counting; Eynard-Orantin differential forms; Eynard-Orantin topological recursion.}


\maketitle


\tableofcontents


\section{Introduction}
\label{sect:introbasicdefs}

In this paper, we show that a generalization of the $bc$-Motzkin numbers, which were originally defined in \cite{sun}, satisfies a topological recursion. It is known (see \cite{dumitrescumulaselec}) that the Catalan numbers can be given a higher genus analogue, and this generalization satisfies a topological recursion. Thus, it is a natural question to ask whether the $bc$-Motzkin numbers, which are defined in terms of Catalan numbers, also satisfy a topological recursion.

Our main result in this paper is the following:

\begin{thm}
\label{thm:bcmtoprecintro}
Define symmetric $v$-linear differential forms on $(\mathbb{P}^1)^v$ for $2g-2+v > 0$, called Eynard-Orantin differential forms, by
$$W_{g,v}^{\widetilde{M}(b,c)}(t_1,t_2,\dots,t_v) = d_{t_1} \cdots d_{t_v} F_{g,v}^{\widetilde{M}(b,c)}(t_1,t_2,\dots,t_v),$$
and for $(g,v) = (0,2)$ by
$$W_{0,2}^{\widetilde{M}(b,c)}(t_1,t_2) = \frac{dt_1 \, dt_2}{(t_1-t_2)^2}.$$
Here, $F_{g,v}^{\widetilde{M}(b,c)}(t_1,t_2,\dots,t_v)$ is the discrete Laplace transform of the generalized $bc$-Motzkin numbers. 

Then, these differential forms satisfy the following integral recursion formula:
\begin{multline*}
W_{g,v}^{\widetilde{M}(b,c)}(t_1,t_2,\dots,t_v) = - \frac{1}{64} \frac{1}{2\pi i} \int_{\gamma} \bigg( \frac{1}{t+t_1} + \frac{1}{t-t_1} \bigg) \frac{(t^2-1)^3}{t^2} \,\frac{1}{dt} \, dt_1 \\
\cdot \bigg[ \sum_{j=2}^v \bigg( W_{0,2}^{\widetilde{M}(b,c)}(t,t_j) W_{g,v-1}^{\widetilde{M}(b,c)}(-t,t_2,\dots,\widehat{t_j},\dots,t_v) \\ + W_{0,2}^{\widetilde{M}(b,c)}(-t,t_j) W_{g,v-1}^{\widetilde{M}(b,c)}(t,t_2,\dots,\widehat{t_j},\dots,t_v) \bigg) \\
+ W_{g-1,v+1}^{\widetilde{M}(b,c)}(t,-t,t_2,\dots,t_v) \\
+ \sum_{g_1+g_2=g, \; I \sqcup J = \{2,\dots,v\}, \; \text{stable}} W_{g_1,|I|+1}^{\widetilde{M}(b,c)}(t,t_I) W_{g_2,|J|+1}^{\widetilde{M}(b,c)}(-t,t_J) \bigg]
\end{multline*}
where the curve $\gamma$ is as given in Figure~\ref{fig:gammacontour}. Here, $t_I = (t_{i_1},t_{i_2},\dots,t_{i_{|I|}})$ for an index set $I$, and the notation $\widehat{t_j}$ means that we delete $t_j$ from this sequence. The last sum in the above formula is for all partitions of $g$ and all set partitions of $\{2, \dots, v\}$, and the ``stable'' summation means $2g_1 + |I| - 1 > 0$ and $2g_2 + |J| - 1 > 0$.
\end{thm}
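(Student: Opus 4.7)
My plan is to adapt the Dumitrescu--Mulase proof of the Eynard--Orantin topological recursion for generalized Catalan numbers. The argument splits naturally into three stages: a catalytic combinatorial recursion for the generalized $bc$-Motzkin numbers themselves, a Laplace-transformed PDE for the free energies $F_{g,v}^{\widetilde{M}(b,c)}$, and a residue-calculus step that converts this PDE into the claimed contour integral recursion.

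The first stage is an ``edge-removal'' argument at a distinguished half-edge on the first marked face of a $bc$-Motzkin decorated ribbon graph. Paralleling the Catalan case, one tracks where the partner half-edge lies and obtains exactly three types of terms: pairings with an edge bordering another marked face (producing the sum over $j \geq 2$), pairings that disconnect the surface into two stable pieces (producing the $W_{g_1,|I|+1}\,W_{g_2,|J|+1}$ sum), and pairings that reduce the genus by one (producing the $W_{g-1,v+1}$ term). The genuinely new ingredient relative to the Catalan setup is that the $b$- and $c$-decorations must be propagated through each case; I expect these to contribute only multiplicative weights that can be absorbed into the choice of spectral coordinates.

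I would then take the discrete Laplace transform of this combinatorial recursion with respect to a coordinate $t$ on $\mathbb{P}^1$ uniformising the underlying spectral curve (something of the form $x = t + 1/t$, or its $bc$-analogue), and apply $d_{t_1}\cdots d_{t_v}$ to pass from $F_{g,v}^{\widetilde{M}(b,c)}$ to $W_{g,v}^{\widetilde{M}(b,c)}$. The factor $\tfrac{1}{64}(t^2-1)^3/t^2$ should emerge here as a Jacobian-type weight coming from the change of variables between the $\mu$-generating series and the $t$-rational functions, localised at the ramification points $t = \pm 1$.

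Finally, I would rewrite the resulting differential recursion as a contour integral on $\mathbb{P}^1$. Choosing $\gamma$ to enclose the branch points $t = \pm 1$ but not the insertions $t = \pm t_j$, the symmetric kernel $\tfrac{1}{t-t_1} + \tfrac{1}{t+t_1}$ reproduces the differential recursion via its residues at $t = \pm t_1$, while the residues at $t = \pm 1$ assemble the topological recursion. The step I expect to be the main obstacle is the precise bookkeeping of constants, signs, and the action of the involution $t \mapsto -t$ in the presence of the extra $b,c$ data; this is what determines the exact prefactor $-1/64$ and ensures that the ``stable'' condition matches $2g_1+|I|-1 > 0$ and $2g_2+|J|-1 > 0$. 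A secondary subtlety is the treatment of the base cases $(g,v) = (0,1)$ and $(0,2)$, which must be handled separately since $W_{0,1}$ is not given by $d_{t_1}F_{0,1}$ and $W_{0,2}$ is fixed by the Bergman-kernel formula in the statement rather than derived from the Laplace transform.
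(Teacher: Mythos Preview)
Your three-stage plan---combinatorial edge-removal recursion, discrete Laplace transform to a differential recursion, then residue calculus to the contour-integral form---is exactly the route the paper takes, and your expectation that the $b,c$ decorations are absorbed entirely into the spectral coordinate is precisely the paper's key observation: under the change of variable $\frac{x_i-b}{c} = 2 + \frac{4}{t_i^2-1}$, the differential recursion for $F_{g,v}^{\widetilde{M}(b,c)}$ becomes literally identical to the Catalan one, so the final residue step is unchanged from Dumitrescu--Mulase.

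One concrete error to flag: the ramification points of the spectral map are not at $t = \pm 1$ but at $t = 0$ and $t = \infty$ (compute $dx/dt = -8ct/(t^2-1)^2$; the points $t = \pm 1$ are poles of $x$, not critical points). Accordingly, the contour $\gamma$ encircles $t = 0$ and $t = \infty$, and the paper evaluates the integral by picking up residues at $t = \pm t_1$ and, for the $W_{0,2}$ terms, also at $t = \pm t_j$---not at $t = \pm 1$. This does not break your strategy, but it does mean your description of which residues reproduce the differential recursion needs to be corrected before the bookkeeping will close.
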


\begin{figure}[hbt]
\includegraphics[height=1.25in]{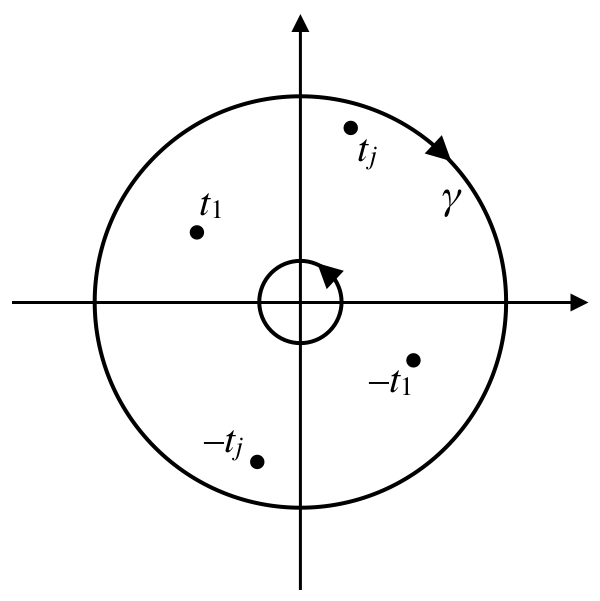}
\caption{The curve $\gamma$.}
\label{fig:gammacontour}
\end{figure}

What is remarkable is that this topological recursion is identical to the topological recursion for generalized Catalan numbers given in \cite{dumitrescumulaselec} (also see Section~\ref{sect:catbackground} of this paper), with the only difference being that we have made a change of variable depending on $b$ and $c$. 

\bigskip

Topological recursion was first introduced by Chekhov, Eynard, and Orantin in their 2006 paper \cite{CEO} (see also \cite{eynardorantin} for a more precise initial definition of topological recursion), but instances of such formulas started appearing earlier. In their paper, the recursion structure was used to calculate multi-resolvent correlation functions of random matrices. However, a topological recursion-like formula had also appeared in a geometry problem before this. Mirzakhani's recursion formula for the Weil-Petersson volume of the moduli space of genus $g$ bordered Riemann surfaces with $n$ geodesic boundaries, which she proved in her thesis in 2004 (see \cite{mirzakhanigeodesics} and \cite{mirzakhaniintersection}) was shown by Eynard and Orantin to also satisfy a topological recursion after applying the Laplace transform (see \cite{eynardorantinweil}).

Since then, many examples of topological recursion formulas have been discovered. As discussed in the preface to \cite{liumulase}, such formulas have appeared in topological quantum field theory and cohomological field theory, intersection numbers of cohomology classes on the moduli space $\overline{\mathcal{M}}_{g,n}$ of stable curves, Gromov-Witten theory, $A$-polynomials and polynomial invariants of hyperbolic knots, WKB analysis of classical ordinary differential equations, enumeration of Hurwitz numbers, Witten-Kontsevich intersection numbers, and moduli spaces of Higgs bundles (see, for example, \cite{BHLM}, \cite{DLN}, \cite{dumitrescumulase}, \cite{EMS}, \cite{mulasezhang}, \cite{norbury}). 

Further, and of particular interest for this paper, topological recursion formulas have also come from various different counting problems in combinatorics, such as counting Grothendieck's dessins d'enfants, and more general counting problems related to graphs drawn on surfaces (see \cite{CMS}, \cite{DMSS}, \cite{DoNorbury}, \cite{mulasepenkava}, \cite{mulasesulkowski}). One example of this is a generalization of the Catalan numbers, which is described in detail in \cite{dumitrescumulaselec} and is also discussed briefly in Section~\ref{sect:catbackground} of this paper. This topological recursion for the generalized Catalan numbers is what prompted the author to look for a topological recursion based on $bc$-Motzkin numbers, which can be viewed as a generalization of the Catalan numbers. It has been observed in \cite{DMSS} that the discrete Laplace transform of edge contraction operations in many graph counting problems corresponds to a topological recursion, and this can be seen in the examples mentioned above, which well as for the $bc$-Motzkin numbers as will be described later in this paper. More generally, the Laplace transform can be identified as a mirror symmetry between the $A$-model side of enumerative geometry and the $B$-model side of holomorphic geometry.

\bigskip

We will not discuss the definition of topological recursion in detail in this paper. The reader is referred to the excellent survey papers \cite{eynard} by Eynard and \cite{borot} by Borot to learn more about topological recursion. We will, however, give a short definition of topological recursion, tailored to combinatorial examples, later in this section. 

Our topological recursion for generalized $bc$-Motzkin numbers gives yet another example of such a combinatorial topological recursion formula. Surprisingly, this formula turns out to be identical, up to a change of variable, to the topological recursion obtained in \cite{dumitrescumulase} for generalized Catalan numbers.

In the Catalan case, an analogy with counting graphs on the genus $g$ surface was used to generalize Catalan numbers to the case of higher genus and greater number of vertices. The $k$th Catalan number is given by
$$C_k = \frac{1}{k+1} \binom{2k}{k}.$$
The first few Catalan numbers for $n = 0, 1, 2, 3, \dots$ are
$$1, 1, 2, 5, 14, 42, 132, 429, 1430, \dots$$
Catalan numbers appear in numerous different counting problems. For example, the $k$th Catalan number (roughly) counts the number of graphs drawn on a sphere with one vertex and $k$ edges, where one of the half-edges incident on this vertex is chosen to be marked with an arrow. (See Figure~\ref{fig:catsphere}.)

\begin{figure}[hbt]
\includegraphics[height=1.25in]{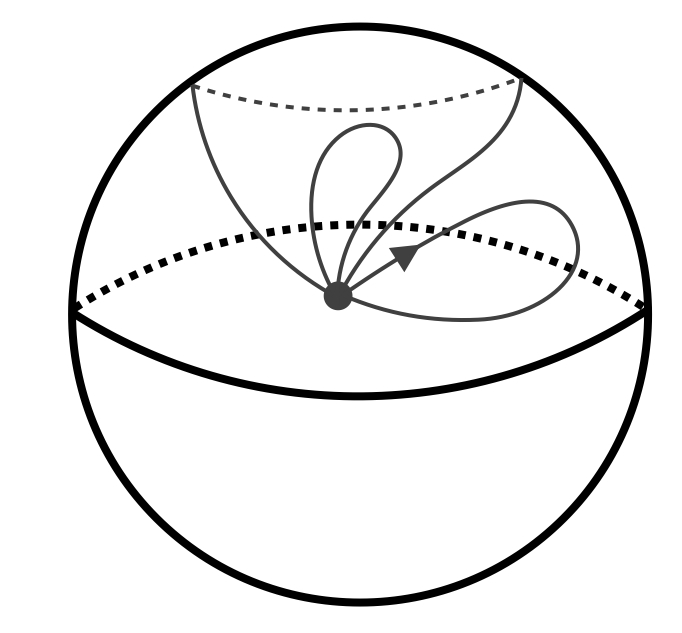}
\caption{Catalan graph with three edges.}
\label{fig:catsphere}
\end{figure}

This graph analogy leads to the definition of generalized Catalan numbers, from which a differential recursion formula on the discrete Laplace transform of these numbers can be proved, and this subsequently gives a topological recursion (see \cite{dumitrescumulaselec}).

Thus, it is natural to ask if any other numbers frequently appearing in combinatorial problems admit a generalization of this kind, and if this generalization satisfies a topological recursion. Our focus in this paper is to study $bc$-Motzkin numbers, which are themselves a generalization of Motzkin numbers.

The $n$th Motzkin number is defined in terms of the Catalan numbers as 
$$M_n = \sum_{k=0}^{\lfloor n/2 \rfloor} \binom{n}{2k} C_k.$$

The first few Motzkin numbers for $n = 0, 1, 2, 3, \dots$ are
$$1, 1, 2, 4, 9, 21, 51, 127, 323, \dots$$
Motzkin numbers were first introduced by Theodore Motzkin in 1948 in his paper \cite{motzkin}. There are many different well-known combinatorial interpretations of Motzkin numbers. For example, the $n$th Motzkin number counts the number of routes on a grid from the coordinate $(0,0)$ to the coordinate $(n,0)$ in $n$ steps, subject to the requirement that the path does not cross below the $x$-axis. (See Figure~\ref{fig:motzkinpaths}.)

\begin{figure}[hbt]
\includegraphics[height=1.25in]{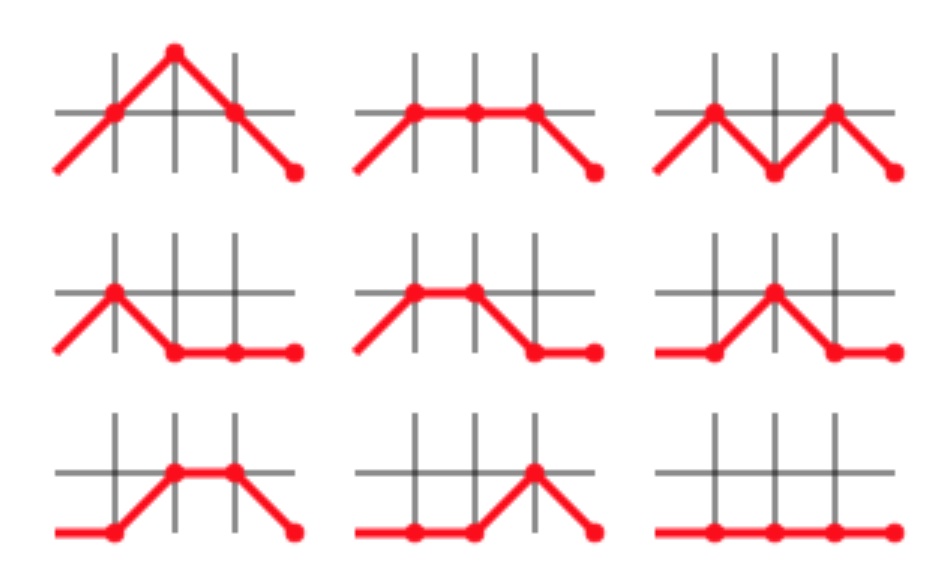}
\caption{All possible Motzkin paths for $n = 4$. (Image credit: Wikipedia.)}
\label{fig:motzkinpaths}
\end{figure}

The $bc$-Motzkin numbers are defined by 
\begin{equation}
\label{eqn:bcmotzkin}
M_n(b,c) = \sum_{k=0}^{\lfloor n/2 \rfloor} \binom{n}{2k} C_k b^{n-2k} c^k,
\end{equation}
where $b,c \in \mathbb{N}$. These numbers were introduced by Sun in 2014 \cite{sun}, and they appear, for example, in the work of lattice models of statistical physics. 

In particular, when $(b,c) = (1,1)$, we recover the definition of Motzkin numbers,
$$M_n(1,1) = M_n.$$
And, Catalan numbers are the special case of $bc$-Motzkin numbers when $(b,c) = (0,1)$, 
$$M_n(0,1) = C_n.$$
Thus, we can also consider these $bc$-Motzkin numbers as a generalization of Catalan numbers, as well as a generalization of Motzkin numbers.  

This leads us to ask the following questions.

\begin{quest}
Do the $bc$-Motzkin numbers admit a higher genus generalization?
\end{quest}

\begin{quest}
If so, then does this generalization satisfy a Catalan-like recursion formula?
\end{quest}

\begin{quest}
If we define the discrete Laplace transform for these higher genus $bc$-Motzkin numbers, does this satisfy a recursion formula? Do we also obtain a topological recursion?
\end{quest}

\begin{quest}
How does the fact that these $bc$-Motzkin numbers are a generalization of Catalan numbers translate into the properties of these recursion formulas?
\end{quest}

In this paper, we answer all these questions affirmatively by giving concrete formulas. An unexpected result is that the recursion formula for the discrete Laplace transform, and hence the topological recursion, is (almost) identical to those for the Catalan numbers. The difference between these results for the Catalan numbers and the generalized $bc$-Motzkin numbers is the coordinate transformation between the defining variables for the discrete Laplace transform and the variables appearing in these recursion formulas. These coordinate transformations give a family of deformations of the ``spectral curve'' of the topological recursion. It is very interesting to see that a surprisingly simple deformation of the spectral curve produces a vast generalization of the Catalan numbers.

Note that Catalan numbers are a special case of $bc$-Motzkin numbers, so this is a two-parameter generalization of the results for generalized Catalan numbers. 

Using an analogy with graph colorings, discussed in more detail in Section~\ref{sect:highergenus}, we define the generalized $bc$-Motzkin numbers as follows.

\begin{Def}
For $b,c \in \mathbb{R}$ with $b \geq 0$ and $c > 0$, we define the generalized $bc$-Motzkin numbers by 
\begin{multline*}
\widetilde{M}_{g,v}(n_1,n_2,\dots,n_v;b,c) = \sum_{\mu_1=0}^{n_1} \sum_{\mu_2=0}^{n_2} \cdots \sum_{\mu_v=0}^{n_v} \binom{n_1}{\mu_1} \binom{n_2}{\mu_2} \cdots \binom{n_v}{\mu_v} \\
\cdot C_{g,v}(\mu_1,\mu_2,\dots,\mu_v) \, b^{(n_1+n_2+\cdots+n_v)-(\mu_1+\mu_2+\cdots+\mu_v)}c^{\mu_1+\mu_2+\cdots+\mu_v}.
\end{multline*}
\end{Def}

From this definition, in Section~\ref{sect:recformula} we prove the following recursion formula for these generalized $bc$-Motzkin numbers:

\begin{thm}
The generalized $bc$-Motzkin numbers satisfy the following formula:
\begin{multline*}
\widetilde{M}_{g,v}(n_1,n_2,\dots,n_v;b,c) - b \widetilde{M}_{g,v}(n_1-1,n_2,\dots,n_v;b,c) \\
\begin{aligned}
& =  c^2 \bigg\{\sum_{j=2}^v n_j \, \widetilde{M}_{g,v-1} (n_1+n_j-2, n_2, \dots, \hat{n_j}, \dots, n_v; b; c) \\
& \quad + \sum_{\zeta+\xi=n_1-2} \bigg[ \widetilde{M}_{g-1,v+1}(\zeta,\xi,n_2,\dots,n_v;b,c) \\
& \quad + \sum_{g_1+g_2=g} \sum_{I \sqcup J = \{2,\dots,v\}} \widetilde{M}_{g_1,|I|+1}(\zeta,n_I;b,c)\widetilde{M}_{g_2,|J|+1}(\xi,n_J;b,c) \bigg] \bigg\} \\
\end{aligned}
\end{multline*}
\end{thm}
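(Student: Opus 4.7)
The plan is to derive the stated recursion by applying the discrete Laplace transform that defines $\widetilde{M}_{g,v}$ to the known edge-contraction recursion for the generalized Catalan numbers $C_{g,v}$ (recalled in Section~\ref{sect:catbackground}). The key bridge is a Pascal-type identity: setting $\mathcal{L}[f](n) := \sum_{\mu=0}^{n}\binom{n}{\mu}f(\mu)b^{n-\mu}c^{\mu}$, one has
\[\mathcal{L}[f](n) - b\,\mathcal{L}[f](n-1) \;=\; c\,\mathcal{L}[Tf](n-1),\qquad (Tf)(\mu) := f(\mu+1),\]
which follows from $\binom{n}{\mu}-\binom{n-1}{\mu}=\binom{n-1}{\mu-1}$ and an index shift. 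Applied in the first variable of the definition of $\widetilde{M}_{g,v}$, this rewrites the left-hand side of the theorem as $c$ times a joint Laplace transform, evaluated at $(n_1-1,n_2,\dots,n_v)$, of the shifted Catalan number $C_{g,v}(\mu_1'+1,\mu_2,\dots,\mu_v)$.

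Next, since $\mu_1'+1 \geq 1$, I would feed the generalized Catalan recursion into $C_{g,v}(\mu_1'+1,\dots)$. This produces two pieces: a ``$j$-sum'' with summands $\mu_j\,C_{g,v-1}(\mu_1'+\mu_j-1,\ldots,\widehat{\mu_j},\ldots)$, and a ``splitting sum'' over $\alpha+\beta=\mu_1'-1$ containing $C_{g-1,v+1}(\alpha,\beta,\mu_2,\ldots)$ together with the stable products $C_{g_1,|I|+1}(\alpha,\mu_I)\,C_{g_2,|J|+1}(\beta,\mu_J)$. Each of these two pieces will, after further binomial manipulation, produce one of the terms on the right-hand side of the theorem, together with a second factor of $c$.

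For the $j$-sum I would use $\mu_j\binom{n_j}{\mu_j}=n_j\binom{n_j-1}{\mu_j-1}$ to pull out the factor $n_j$, then set $\mu_j=\mu_j'+1$ to extract a second factor of $c$, and finally apply the Vandermonde convolution $\sum_{\mu_1'+\mu_j'=\sigma}\binom{n_1-1}{\mu_1'}\binom{n_j-1}{\mu_j'}=\binom{n_1+n_j-2}{\sigma}$ to fuse two Laplace transforms in $n_1,n_j$ into one in the single merged variable $n_1+n_j-2$; this yields the $c^{2}\sum_{j}n_{j}\widetilde{M}_{g,v-1}(n_1+n_j-2,\ldots)$ term. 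For the splitting sum I would reparametrize $\mu_1'=\alpha+\beta+1$, then invoke the hockey-stick convolution $\sum_{\zeta+\xi=n_1-2}\binom{\zeta}{\alpha}\binom{\xi}{\beta}=\binom{n_1-1}{\alpha+\beta+1}$, which splits the single binomial coefficient in $n_1-1$ into the pair needed to assemble $\widetilde{M}_{g-1,v+1}(\zeta,\xi,n_2,\ldots)$ and the product $\widetilde{M}_{g_1,|I|+1}(\zeta,n_I)\,\widetilde{M}_{g_2,|J|+1}(\xi,n_J)$; again two factors of $c$ appear, one from the initial Pascal step and one from the reparametrization.

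The step I expect to be hardest is the bookkeeping in the stable-product piece of the splitting term: after applying the hockey-stick identity in the $n_1$ variable, one still must verify that the Laplace transforms in the variables indexed by $I$ and by $J$ factor across the disjoint union $I\sqcup J=\{2,\dots,v\}$ in exactly the way needed to produce the two $\widetilde{M}$ factors on the right, and that the summations over $g_1+g_2=g$, over $I\sqcup J$, and over $\alpha,\beta$ all line up consistently. This is routine since $\mathcal{L}$ is separately a transform in each variable and the weights $b^{\sum n_i-\sum\mu_i}c^{\sum\mu_i}$ factor multiplicatively, but any off-by-one error would surface here. Once both the $j$-sum and splitting-sum contributions are reassembled, the resulting expression is precisely the $c^2\{\cdots\}$ right-hand side of the theorem.
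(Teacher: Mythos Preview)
Your proposal is correct and coincides with the paper's algebraic proof (Appendix~\ref{subsect:appendixrecursion}): the Pascal step $\binom{n_1}{\mu_1}-\binom{n_1-1}{\mu_1}=\binom{n_1-1}{\mu_1-1}$, the absorption $\mu_j\binom{n_j}{\mu_j}=n_j\binom{n_j-1}{\mu_j-1}$ followed by Vandermonde for the $j$-sum, and the identity $\sum_{\zeta+\xi=n}\binom{\zeta}{\alpha}\binom{\xi}{\beta}=\binom{n+1}{\alpha+\beta+1}$ (what the paper calls the ``Vandermonde-like'' identity, Proposition~\ref{prop:vandermondelike}) for the splitting term are exactly the ingredients the paper uses. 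Your treatment of the $(g-1,v+1)$ piece is in fact slightly more direct than the paper's, which takes a detour through a telescoping sum; and note that the paper's primary proof in Section~\ref{sect:recformula} is a separate combinatorial argument via the graph-coloring model of Definition~\ref{def:bcmcoloring}, with the algebraic proof you outline given as an alternative.
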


Note that this is not truly a recursion formula, as the $(g,v)$ term also appears on the right side in this formula.

Then, in Section~\ref{sect:diffrecformula}, using this formula and the definition of the discrete Laplace transform of the $bc$-Motzkin numbers,
$$F_{g,v}^{\widetilde{M}(b,c)}(x_1,\dots,x_v) = 
\begin{cases}
\displaystyle \sum_{n=1}^{\infty} \frac{\widetilde{M}_{0,1}(n;b,c)}{n} x_1^{-n} - \widetilde{M}_{0,1}(0;b,c) \log x_1 \text{ if $(g,v) = (0,1)$}, \\
\displaystyle \sum_{n_1=1}^{\infty} \cdots \sum_{n_v=1}^{\infty} \frac{\widetilde{M}_{g,v}(n_1,n_2,\dots,n_v;b,c)}{n_1 n_2 \cdots n_v} x_1^{-n_1} \cdots x_v^{-n_v}, \\
\end{cases}$$
we prove that these functions satisfy a differential recursion formula.

\begin{thm}
The discrete Laplace transform $F^{\widetilde{M}(b,c)}_{g,v}(t_1,t_2,\dots,t_v)$ satisfies the following differential recursion formula, for every $(g,v) \neq (0,1),(0,2)$:
\begin{multline*}
\frac{\partial}{\partial t_1} F_{g,v}^{\widetilde{M}(b,c)}(t_1,t_2,\dots,t_v) = - \frac{1}{16} \sum_{j=2}^v \bigg[ \frac{t_j}{t_1^2 - t_j^2} \bigg( \frac{(t_1^2-1)^3}{t_1^2} \frac{\partial}{\partial t_1} F_{g,v-1}^{\widetilde{M}(b,c)}(t_1,\dots,\widehat{t_j},\dots,t_v)\\
\begin{aligned}
& - \frac{(t_j^2-1)^3}{t_j^2} \frac{\partial}{\partial t_j} F_{g,v-1}^{\widetilde{M}(b,c)}(t_2,\dots,t_v) \bigg) \bigg] \\
& - \frac{1}{16} \sum_{j=2}^v \frac{(t_1^2-1)^2}{t_1^2} \bigg[ \frac{\partial}{\partial t_1} F_{g,v-1}^{\widetilde{M}(b,c)}(t_1,\dots,\widehat{t_j},\dots,t_v) \bigg] \\
& - \frac{1}{32} \frac{(t_1^2-1)^3}{t_1^2} \frac{\partial}{\partial u_1} \frac{\partial}{\partial u_2} F_{g-1,v+1}^{\widetilde{M}(b,c)}(u_1,u_2,t_2,\dots,t_v) \bigg|_{u_1=u_2=t_1} \\
& - \frac{1}{32} \frac{(t_1^2-1)^3}{t_1^2} \sum_{g_1+g_2=g, \; I \sqcup J = \{2,\dots,v\}, \; \text{stable}} \frac{\partial}{\partial t_1} F_{g_1,|I|+1}^{\widetilde{M}(b,c)}(t_1,t_I) \cdot \frac{\partial}{\partial t_1} F_{g_2,|J|+1}^{\widetilde{M}(b,c)}(t_1,t_J) \\
\end{aligned}
\end{multline*}
where the ``stable'' summation means $2g_1 + |I| - 1 > 0$ and $2g_2 + |J| - 1 > 0$.

We have the initial conditions
$$\frac{\partial}{\partial t} F_{0,1}^{\widetilde{M}(b,c)}(t) = \frac{8t}{(t+1)(t-1)^3}$$
and
$$\frac{\partial}{\partial t_1} F_{0,2}^{\widetilde{M}(b,c)}(t_1,t_2) = \frac{(t_2+1)}{(t_1-1)(t_1+t_2)}.$$
\end{thm}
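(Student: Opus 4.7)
The plan is to apply the discrete Laplace transform to the combinatorial recursion of the previous theorem, and then pass to the spectral coordinates $t_i$ via a $(b,c)$-dependent change of variable. At the level of generating functions, a simple identity shows that the $bc$-Motzkin Laplace transform differs from the Catalan Laplace transform only by this change of variable, so the computation ultimately reduces to the Catalan case handled in \cite{dumitrescumulaselec}. Concretely, I would multiply the combinatorial recursion by $(n_2 \cdots n_v)^{-1}\, x_1^{-n_1} \cdots x_v^{-n_v}$ and sum over all $(n_1,\ldots,n_v)$ in the appropriate ranges, carefully tracking boundary contributions from the shifts $n_1 \mapsto n_1-1$ on the left and $n_1 \mapsto \zeta+\xi+2$ on the right. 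Three generating function identities do the work: $\sum_{n \ge 1} n y^n = y/(1-y)^2$ handles the sum $\sum_{n_j} n_j\, \widetilde{M}_{g,v-1}(\cdots)$, producing the kernel $x_1 x_j/(x_j-x_1)^2$ applied to a derivative of $F^{\widetilde{M}(b,c)}_{g,v-1}$; the Cauchy product rule converts the convolution $\sum_{\zeta+\xi=n_1-2}$ into a product of Laplace transforms evaluated at the same variable $x_1$; and the stable splitting carries over directly. This yields a differential recursion for $F^{\widetilde{M}(b,c)}_{g,v}$ in the $x$-variables.

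Next, I would change variables by $x_i = b + c(t_i + 1/t_i)$. This substitution is forced by the identity
\[
\sum_n \widetilde{M}_{0,1}(n;b,c)\, x^{-n} \;=\; \frac{x}{x-b}\, C\!\left(\frac{c^2}{(x-b)^2}\right),
\]
which follows from $C_{0,1}(2k)=C_k$ (and $C_{0,1}(2k+1)=0$) together with the elementary identity $\sum_{n\ge \mu}\binom{n}{\mu}b^{n-\mu}x^{-n} = x^{-\mu}/(1-b/x)^{\mu+1}$; thus the Catalan substitution $(x-b)/c = t+1/t$ intertwines the $bc$-Motzkin Laplace transform with the Catalan Laplace transform, up to the elementary prefactor $x/(x-b)$. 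The Jacobian $dx/dt = c(t^2-1)/t^2$ then combines with the kernels from the first step to produce exactly the factors $(t^2-1)^3/t^2$ in the recursion, while antisymmetrization under the hyperelliptic involution $t \mapsto 1/t$ of the spectral curve produces the kernel pieces $t_j/(t_1^2-t_j^2)$ and the diagonal term $(t_1^2-1)^2/t_1^2$ appearing in the first two lines of the stated formula.

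The main obstacle will be the bookkeeping of boundary terms arising from the index shifts in the Laplace transform and of the prefactor $x/(x-b)$ under the change of variables, establishing that these combine cleanly so that the $(b,c)$-dependence appears only through the substitution itself and not explicitly in the recursion coefficients. The initial conditions are then verified by direct computation: for $(0,1)$, the closed-form generating function identity above is integrated and expressed in the $t$-variable to give $\partial_t F^{\widetilde{M}(b,c)}_{0,1}(t) = 8t/((t+1)(t-1)^3)$; and for $(0,2)$, the analogous two-variable generating function identity for $\widetilde{M}_{0,2}$ is summed directly and yields $\partial_{t_1} F^{\widetilde{M}(b,c)}_{0,2}(t_1,t_2) = (t_2+1)/((t_1-1)(t_1+t_2))$ after the same substitution.
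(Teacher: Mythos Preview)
Your overall plan—apply the Laplace transform to the combinatorial recursion of Theorem~\ref{thm:genbcmrec} and then pass to spectral coordinates—is exactly the paper's route. Two concrete details in your proposal, however, are off and would derail the computation.

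First, your change of variable $x_i = b + c(t_i + 1/t_i)$ is not the one the theorem is stated in. The paper uses $(x_i-b)/c = 2(t_i^2+1)/(t_i^2-1)$; yours is related to it by the M\"obius map $s_i = (t_i+1)/(t_i-1)$, so the two parametrizations are abstractly equivalent, but every rational coefficient in the statement—the factors $(t_1^2-1)^3/t_1^2$, $(t_1^2-1)^2/t_1^2$, $t_j/(t_1^2-t_j^2)$, and both initial conditions—is written in the paper's $t$-coordinate. Carrying out the calculation in your $s$-coordinate yields a different-looking recursion and requires an extra reparametrization step you have not mentioned.

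Second, the kernel you name for the term $\sum_{j} n_j\,\widetilde{M}_{g,v-1}(n_1+n_j-2,\ldots)$ is wrong. The factor $n_j$ does not act as the operator $\sum n y^n = y/(1-y)^2$; it simply cancels the $1/n_j$ already in the denominator $1/(n_2\cdots n_v)$ of the Laplace transform. What remains is the finite geometric sum
\[
\sum_{\substack{n_1+n_j=k+2\\ n_1,n_j\ge 1}} x_1^{-n_1-1} x_j^{-n_j} \;=\; \frac{x_1^{-k-1}-x_j^{-k-1}}{x_1(x_j-x_1)},
\]
which produces a \emph{first}-order pole $1/(x_j-x_1)$ and, crucially, a \emph{difference} of two $\partial F_{g,v-1}$ terms (one with first argument $x_1$, one with first argument $x_j$). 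After the change of variables this difference becomes the first bracketed sum in the theorem; the kernel $x_1x_j/(x_j-x_1)^2$ you wrote does not arise.

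Your shortcut via the binomial identity $\sum_{n\ge\mu}\binom{n}{\mu}b^{n-\mu}x^{-n} = x/(x-b)^{\mu+1}$ is a genuine alternative the paper does not exploit in the proof (it derives $F^{\widetilde{M}(b,c)}_{g,v}(t) = F^C_{g,v}(t)$ only afterward, as a corollary). But note that this identity relates the \emph{full mixed derivatives} $\partial_{x_1}\cdots\partial_{x_v} F^{\widetilde{M}(b,c)}_{g,v}$ and $\partial_{y_1}\cdots\partial_{y_v} F^C_{g,v}$ (with $y_i=(x_i-b)/c$) up to the prefactor $\prod_i x_i/(x_i-b)$; it does not directly relate the $F$'s or their single $t_1$-derivatives. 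To turn this into a proof you must still show that the prefactor is absorbed by the change of variables and that the integration constants (fixed by vanishing as $t_i\to -1$) match—exactly the ``main obstacle'' you flag, which needs an actual argument rather than a remark.
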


Here, we have used the change of variable $t_i = t_i(x_i,b,c)$, for $i \in \{1,2,\dots,v\}$, defined by 
$$\frac{x_i - b}{c} = 2 + \frac{4}{t_i^2-1}.$$
This leads to the topological recursion stated in Theorem~\ref{thm:bcmtoprecintro}.

Thus, in response to the questions posed above, we see that:

\begin{enumerate}
\item The $bc$-Motzkin numbers can indeed be generalized to a higher genus case, just like the Catalan numbers.
\item These generalized $bc$-Motzkin numbers satisfy a recursion formula.
\item We can obtain a differential recursion formula on the discrete Laplace transform of these generalized $bc$-Motzkin numbers, and this leads to a topological recursion.
\item The differential recursion and the topological recursion are (almost) identical to those for the Catalan numbers. The difference is in the coordinate transformation from $x_i$ to $t_i$ given above.
\end{enumerate}

\bigskip

Now, what, precisely, is topological recursion? The combinatorial definition of topological recursion which we will be using in this paper is as given in \cite{DMSS}.

\begin{Def}
Let $t$ be a choice of coordinate on $\mathbb{P}^1$. Let $S \subset \mathbb{P}^1$ be a finite collection of points and compact real curves such that the complement $\Sigma = \mathbb{P}^1 \setminus S$ is connected. The spectral curve of genus $0$ is the data $(\Sigma,\pi)$ consisting of a Riemann surface $\Sigma$ and a simply ramified holomorphic map 
$$\pi : \Sigma \ni t \mapsto \pi(t) = x \in \mathbb{P}^1$$
so that its differential $dx$ has only simple zeros. Let $R = \{p_1,p_2,\dots,p_r\} \subset \Sigma$ denote the ramification points, and let $U = \sqcup_{j=1}^r U_j$ denote the disjoint union of small neighborhoods $U_j$ around each $p_j$, such that $\pi: U_j \to \pi(U_j) \subset \mathbb{P}^1$ is a double-sheeted covering ramified only at $p_j$. We denote by $\overline{t}$ the local Galois conjugate of $t \in U_j$ (i.e., interchanging the two sheets). The canonical sheaf of $\Sigma$ is denoted by $\mathcal{K}$. Because of our choice of coordinate $t$, we have a preferred basis $dt$ for $\mathcal{K}$ and $\partial/\partial t$ for $\mathcal{K}^{-1}$. The meromorphic differential forms $W_{g,n}(t_1,t_2,\dots,t_v)$ are said to satisfy the Eynard-Orantin topological recursion if the following conditions are satisfied:

\begin{enumerate}

\item $W_{0,1}(t) \in H^0(\Sigma,\mathcal{K})$.

\item We have 
$$W_{0,2}(t_1,t_2) = \frac{dt_1\cdot dt_2}{(t_1-t_2)^2} - \pi^*\frac{dx_1\cdot dx_2}{(x_1-x_2)^2} \in H^0(\Sigma \times \Sigma, \mathcal{K}^{\otimes 2}(2\Delta)),$$ 
where $\Delta$ is the diagonal of $\Sigma \times \Sigma$.

\item The recursion kernel $K_j(t,t_1) \in H^0(U_j \times \Sigma, (\mathcal{K}_{U_j}^{-1}  \otimes \mathcal{K})(\Delta))$ for $t \in U_j$ and $t_1 \in \Sigma$ is defined by
$$K_j(t,t_1) = \frac{1}{2} \frac{\int_t^{\overline{t}} W_{0,2}(\cdot,t_1)}{W_{0,1}(\overline{t}) - W_{0,1}(t)}.$$
The kernel is an algebraic operator that multiplies $dt_1$ while contracting $dt$.

\item The general differential forms $W_{g,n}(t_1,t_2,\dots,t_v) \in H^0(\Sigma^v, \mathcal{K}(*R)^{\otimes v})$ are meromorphic symmetric differential forms with poles only at the ramification points $R$ for $2g-2+v>0$, and are given by the recursion formula 
\begin{multline*}
W_{g,v}(t_1,t_2,\dots,t_v) = \frac{1}{2\pi i} \sum_{j=1}^r \oint_{U_j} K_j(t,t_1) \bigg[ W_{g-1,v+1}(t,\overline{t},t_2,\dots,t_v) \\
+ \sum_{g_1+g_2=g, \; I \sqcup J = \{2,\dots,v\}, \; \text{no $(0,1)$ terms}} W_{g_1,|I|+1}(t,t_I) W_{g_2,|J|+1}(\overline{t},t_J) \bigg].
\end{multline*}
Here, the integration is taken with respect to $t_j \in U_j$ along a positively oriented simple closed loop around $p_j$, and $t_I = (t_i)_{i \in I}$ for a subset $I \subset \{1,2,\dots,v\}$. 

\item The differential form $W_{1,1}(t_1)$ requires a separate treatment since $W_{0,2}(t_1,t_2)$ is regular at the ramification points but has poles elsewhere.
\begin{align*}
W_{1,1}(t_1) & = \frac{1}{2\pi i} \sum_{j=1}^r \oint_{U_j} K_j(t,t_1) \bigg[ W_{0,2}(u,v) + \pi^* \frac{dx(u) \cdot dx(v)}{(x(u) - x(v))^2} \bigg] \bigg|_{u = t, v = \overline{t}} \\
& = \frac{1}{2\pi i} \sum_{j=1}^r \oint_{U_j} K_j(t,t_1) \bigg[ \frac{dt \cdot d\overline{t}}{(t - \overline{t})^2} \bigg].
\end{align*}
Let $y: \Sigma \to \mathbb{C}$ be a holomorphic function defined by the equation
$$W_{0,1}(t) = y(t) \; dx(t).$$
Equivalently, we can define the function by contraction $y = i_{X}W_{0,1}$, where $X$ is the vector field on $\Sigma$ dual to $dx(t)$ with respect to the coordinate $t$. Then, we have an embedding
$$\Sigma \ni t \mapsto (x(t),y(t)) \in \mathbb{C}^2.$$

\item If the spectral curve has at most two branches, then we choose a preferred coordinate $t$ with the branch points located at $t = \infty$ and $t = 0$. This results in differentials $W_{g,v}$ that are Laurent polynomials in $t$ and serves to simplify many of the residue calculations.

\end{enumerate}

\end{Def}

This paper is organized as follows. In the second section, we review the results in \cite{dumitrescumulaselec} regarding generalized Catalan numbers, first defining these generalized Catalan numbers via an analogy with graphs on the genus $g$ surface. This gives a recursive definition of the generalized Catalan numbers. From this formula, one can then prove a differential recursion formula on the discrete Laplace transform of these numbers, then from this formula prove a subsequent topological recursion. In the third section, we introduce our definition of higher genus $bc$-Motzkin numbers, which we define via an analogy with counting colored graphs on the genus $g$ surface. In the fourth section, we state and give a combinatorial proof of a recursion formula for these generalized $bc$-Motzkin numbers. An algebraic proof is given in the appendix. In the fifth section, we show that this recursion formula does actually lead to a differential recursion formula on the discrete Laplace transform of these generalized $bc$-Motzkin numbers, and, further, that with a particular choice of change of variable the differential recursion formula is identical to the differential recursion formula for generalized Catalan numbers. Finally, in the sixth section, we show that this differential recursion formula for generalized $bc$-Motzkin numbers leads to the topological recursion given in Theorem~\ref{thm:bcmtoprecintro} above.


\section{Background: Higher Genus Catalan Numbers}
\label{sect:catbackground}

In this section we summarize some results obtained by Dumitrescu and Mulase in \cite{dumitrescumulaselec}, regarding generalized Catalan numbers.

As introduced in Section~\ref{sect:introbasicdefs}, the $k$th Catalan number (roughly) counts the number of graphs drawn on a sphere with one vertex and $k$ edges, where one of the half-edges incident on this vertex is chosen to be marked with an arrow. (See Figure~\ref{fig:catsphere}.)

This analogy with counting graphs on a sphere and having one vertex can be extended, and is used to define the generalized Catalan numbers 
$$C_{g,v}(\mu_1,\mu_2,\dots,\mu_v),$$
which count the number of graphs drawn on the (oriented) genus $g$ surface with $v$ vertices that give a cell decomposition of the surface, where the $i$th vertex has degree $\mu_i$, and at each vertex one of the incident half-edges is chosen to be marked with an arrow  (see \cite{dumitrescumulaselec} and \cite{walshlehman}). 

We will call such a graph a Catalan graph of degree $(\mu_1,\mu_2,\dots,\mu_v)$ on the genus $g$ surface. (See Figure~\ref{fig:image1} for an example of such a graph.)

\begin{figure}[hbt]
\includegraphics[height=1.25in]{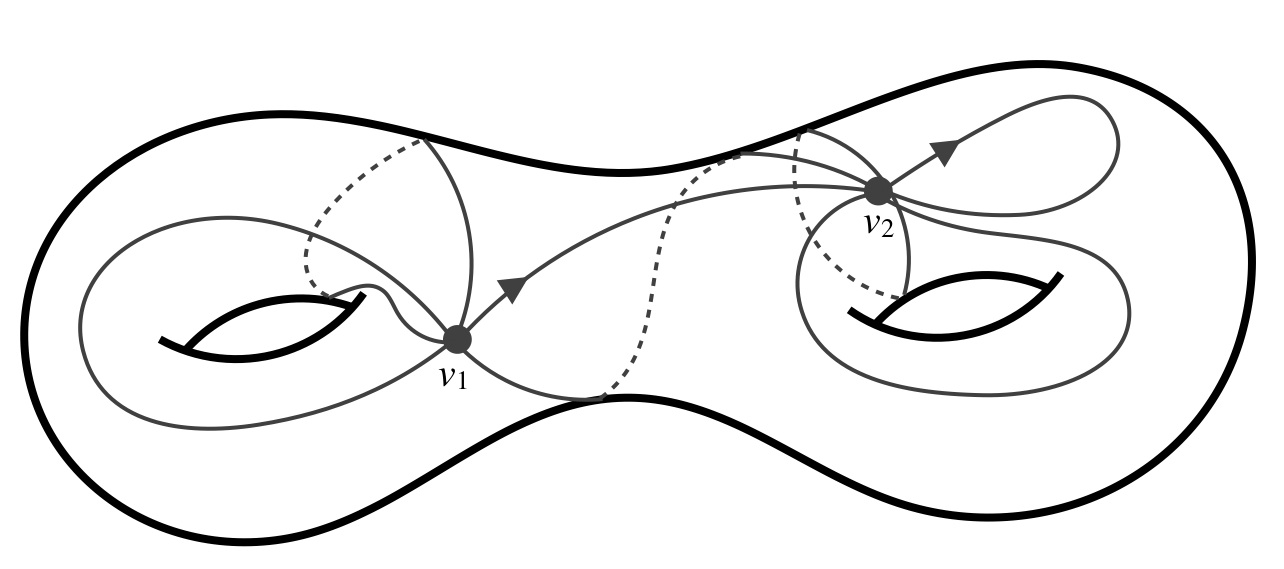}
\caption{Catalan graph of degree $(6,8)$ on the genus $2$ surface.}
\label{fig:image1}
\end{figure}

Now, the Catalan numbers are known to satisfy the recursion formula, 
\begin{equation}
\label{eqn:catrec}
C_n = \sum_{a+b=n-1} C_a C_b.
\end{equation}
This formula has the higher genus analogue given in the following proposition.

\begin{prop}
\label{prop:gencatrec}
The generalized Catalan numbers satisfy
\begin{equation}
\label{eqn:gencatrec}
\begin{aligned}
C_{g,v}(\mu_1,\mu_2,\dots,\mu_v) & = \sum_{j=2}^v \mu_j C_{g,v-1} (\mu_1 + \mu_j - 2, \mu_2, \dots, \widehat{\mu_j}, \dots, \mu_v) \\
& \quad + \sum_{\alpha + \beta = \mu_1 - 2} \bigg[ C_{g-1,v+1}(\alpha,\beta,\mu_2,\dots,\mu_v) \\
& \quad + \sum_{g_1+g_2=g, I \sqcup J = \{2,\dots,v\}} C_{g_1,|I|+1}(\alpha,\mu_I)C_{g_2,|J|+1}(\beta,\mu_J)\bigg] \\
\end{aligned}
\end{equation}
where $\mu_I = (\mu_{i_1},\mu_{i_2},\dots,\mu_{i_{|I|}})$ for an index set $I$, the notation $\widehat{\mu_j}$ means that we delete $\mu_j$ from this sequence, and the third sum in the above formula is for all partitions of $g$ and set partitions of $\{2, \dots, v\}$.
\end{prop}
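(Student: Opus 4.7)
The plan is to give a combinatorial proof by analyzing the edge $e$ containing the arrowed half-edge at vertex~$1$ in a Catalan graph $\Gamma$ of degree $(\mu_1,\mu_2,\dots,\mu_v)$ on the genus $g$ surface. Exactly one of three mutually exclusive situations occurs: either $e$ joins vertex~$1$ to a distinct vertex~$j$ with $2\le j\le v$, or $e$ is a loop at vertex~$1$, in which case the loop is either nonseparating or separating on the surface. I would show that these three cases account for exactly the three kinds of terms appearing on the right-hand side of \eqref{eqn:gencatrec}, generalizing the classical proof of the Catalan recursion \eqref{eqn:catrec}.

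In the non-loop case I would contract $e$, merging vertices~$1$ and~$j$ into a single vertex of degree $\mu_1+\mu_j-2$; the cyclic order at this merged vertex is obtained by splicing those of the original two vertices at the slot where $e$ sat, while the genus and the remaining vertex degrees are unchanged. The old arrow at vertex~$1$ is destroyed with $e$, but a canonical new arrow is placed on the merged vertex (say at the half-edge immediately following the splice on the side coming from vertex~$1$). Vertex~$j$'s original arrow is forgotten in the process, so to reconstruct the original graph from the contracted one I must additionally specify one of the $\mu_j$ half-edges that came from vertex~$j$ to carry vertex~$j$'s old arrow. Consequently, the forward contraction map is $\mu_j$-to-one onto Catalan graphs of degree $(\mu_1+\mu_j-2,\mu_2,\dots,\widehat{\mu_j},\dots,\mu_v)$ on the same genus $g$ surface, producing the factor $\mu_j\,C_{g,v-1}(\mu_1+\mu_j-2,\mu_2,\dots,\widehat{\mu_j},\dots,\mu_v)$ summed over $j\in\{2,\dots,v\}$.

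In the loop case I would instead delete $e$, turning the $\mu_1$ half-edges at vertex~$1$ into $\mu_1-2$ half-edges partitioned by the two former ends of $e$ into two arcs of sizes $\alpha$ and $\beta$ with $\alpha+\beta=\mu_1-2$. Since the arrow at vertex~$1$ sits on one of the two loop-ends, the two arcs are canonically ordered, giving the ordered pair $(\alpha,\beta)$. If $e$ is nonseparating, then cutting along $e$ decreases the genus by $1$ and splits vertex~$1$ into two new vertices of degrees $\alpha$ and $\beta$; placing on each of them the canonical arrow at the half-edge adjacent to the cut yields a Catalan graph counted by $C_{g-1,v+1}(\alpha,\beta,\mu_2,\dots,\mu_v)$, and this construction is bijective. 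If instead $e$ is separating, the same procedure decomposes the surface into two components of genera $g_1,g_2$ with $g_1+g_2=g$ on which the remaining vertex labels partition as $I\sqcup J=\{2,\dots,v\}$, and the two halves of vertex~$1$ land in distinct components, producing the product $C_{g_1,|I|+1}(\alpha,\mu_I)\,C_{g_2,|J|+1}(\beta,\mu_J)$. Summing these contributions over $\alpha+\beta=\mu_1-2$, over $(g_1,g_2)$, and over $(I,J)$ completes the right-hand side.

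The main obstacle I foresee is carefully verifying the bookkeeping of arrows so that every edge operation described really is a bijection (or a well-defined $\mu_j$-to-one map in the non-loop case): one must check that the canonical placements of arrows on the contracted or cut graph are truly well-defined, that the forward map does not depend on spurious choices, and that the inverse procedure uniquely reconstructs the original Catalan graph from the prescribed combinatorial data (together with the extra $\mu_j$-fold datum in the non-loop case). Once these bijective verifications are in place and the three cases are observed to exhaust all possibilities for $e$, the recursion \eqref{eqn:gencatrec} follows by adding the contributions.
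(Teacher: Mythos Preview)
Your proposal is correct and follows essentially the same approach as the paper's proof: both contract the edge carrying the arrowed half-edge at vertex~$1$, split into the case where that edge joins $v_1$ to a distinct $v_j$ (yielding the $\mu_j$-to-one contraction and the first sum) and the case where it is a loop (yielding, after pinching and separating the resulting node, either a genus-$(g-1)$ graph with $v+1$ vertices or a pair of graphs with genera summing to $g$). The only cosmetic difference is that you phrase the loop case as ``deleting'' $e$ and cutting, whereas the paper phrases it as ``contracting'' $e$ to a double point and then normalizing; the resulting bijections and bookkeeping of arrows are identical.
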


\begin{rem}
Observe that this is not truly a recursion formula, since the $(g,v)$ term also appears on the right side of the equation.
\end{rem}

This formula serves to define the generalized Catalan numbers.

\begin{rem}
With this definition, $C_{0,1}(\mu)$ is actually the aerated Catalan numbers,
$$C_{0,1}(\mu) = 
\begin{cases} 
C_{\mu/2} & \text{if $\mu$ is even,} \\ 
0 & \text{if $\mu$ is odd.} \\ 
\end{cases}$$
\end{rem}

In Section~\ref{sect:recformula} of this paper, we will show that the generalized $bc$-Motzkin numbers satisfy a similar formula, which reduces to the Catalan formula when $b = 0$ and $c = 1$.

To prove this formula, we may proceed as follows. A proof of this result is also given in \cite{dumitrescumulaselec}.

\begin{proof}

We start by contracting the edge $E$ corresponding to the arrowed half-edge which is incident on $v_1$. There are two cases which we need to study.

\emph{Case 1.}  Assume $E$ connects $v_1$ and $v_j$ ($j \neq 1$). 

Contracting $E$ replaces the two vertices $v_1$ and $v_j$ with one vertex, of degree $\mu_1 + \mu_j - 2$. To make this counting bijective, we need to be able to go back to the original graph if we are given $\mu_1$ and $\mu_j$, which are the degrees of $v_1$ and $v_j$, respectively. We do this by putting an arrow on the half-edge next to $E$ with respect to the counterclockwise cyclic ordering that comes from the orientation on the surface. (See Figure~\ref{fig:catalanproof1}.) However, we must first delete the arrow that was assigned to a half-edge incident on $v_j$ in the original graph. So, there are $\mu_j$ different graphs which produce the same result.

This gives the first term in the Catalan recursion.

\begin{figure}[hbt]
\includegraphics[height=1.25in]{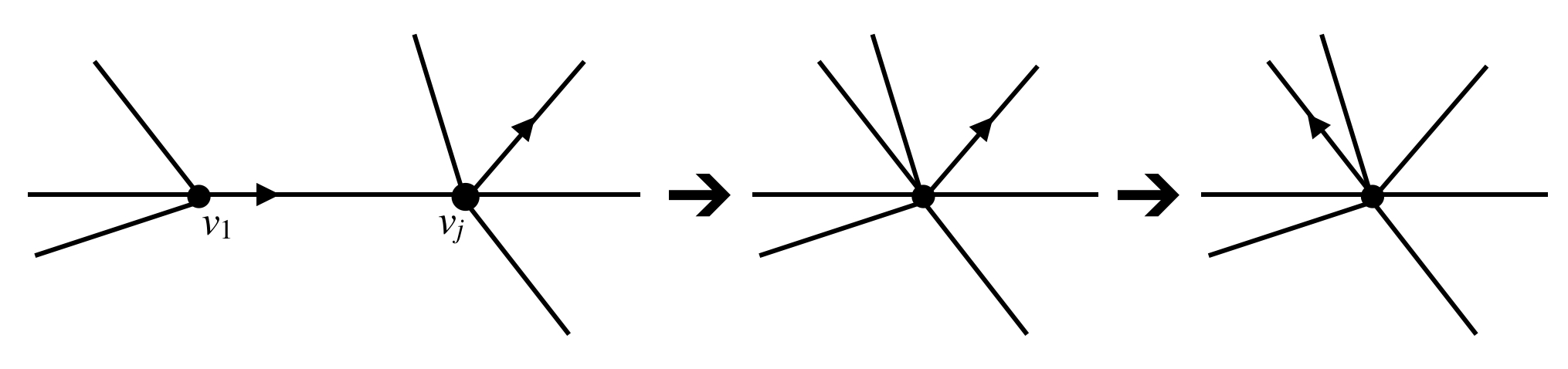}
\caption{}
\label{fig:catalanproof1}
\end{figure}

\emph{Case 2.} Assume $E$ is a loop on $v_1$. 

Contracting $E$ separates the incident half-edges at $v_1$ into two collections, with $\alpha$ edges on one side and $\beta$ edges on the other (note that $\alpha$ or $\beta$ may be zero). Since $E$ is a loop, contracting it causes pinching on the surface and produces a double point. We separate the double point into two new vertices. The result may be a surface of genus $g-1$, or two surfaces, of genus $g_1$ and $g_2$ with $g_1+g_2 = g$. We assign an arrow to the half edge(s) next to $E$, again with respect to the counterclockwise cyclic ordering that comes from the orientation on the surface. (See Figure~\ref{fig:catalanproof2}.) 

This gives the remaining terms in the Catalan recursion.

\begin{figure}[hbt]
\includegraphics[height=1in]{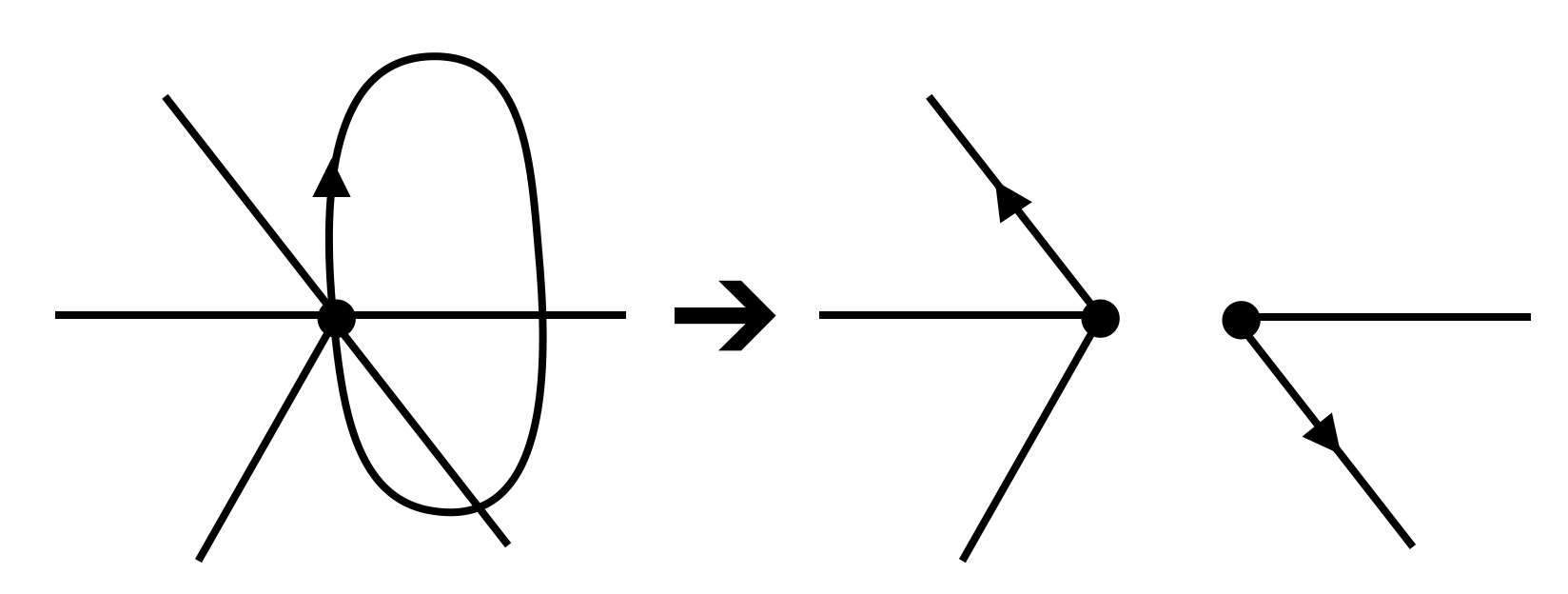}
\caption{}
\label{fig:catalanproof2}
\end{figure}
\end{proof}

Let us now look at some examples.

\begin{ex}

Contracting the arrowed half-edge in the Catalan graph of degree $4$ on the genus $1$ surface in Figure~\ref{fig:catalanproof3} gives a Catalan graph of degree $(1,1)$ on the genus $0$ surface.

\begin{figure}[hbt]
\includegraphics[height=1in]{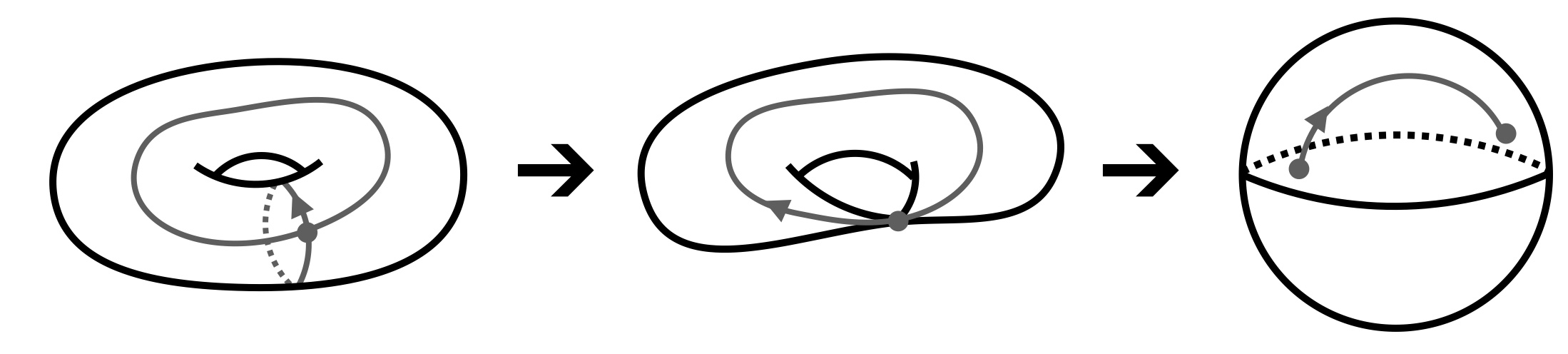}
\caption{}
\label{fig:catalanproof3}
\end{figure}

\end{ex}

\begin{ex}

Contracting the arrowed half-edge in the Catalan graph of degree $(4,4)$ on the genus $1$ surface in Figure~\ref{fig:catalanproof4} gives a Catalan graph of degree $(4,1)$ on the genus $1$ surface and a Catalan graph of degree $0$ on the genus $0$ surface. 

\begin{figure}[hbt]
\includegraphics[height=1in]{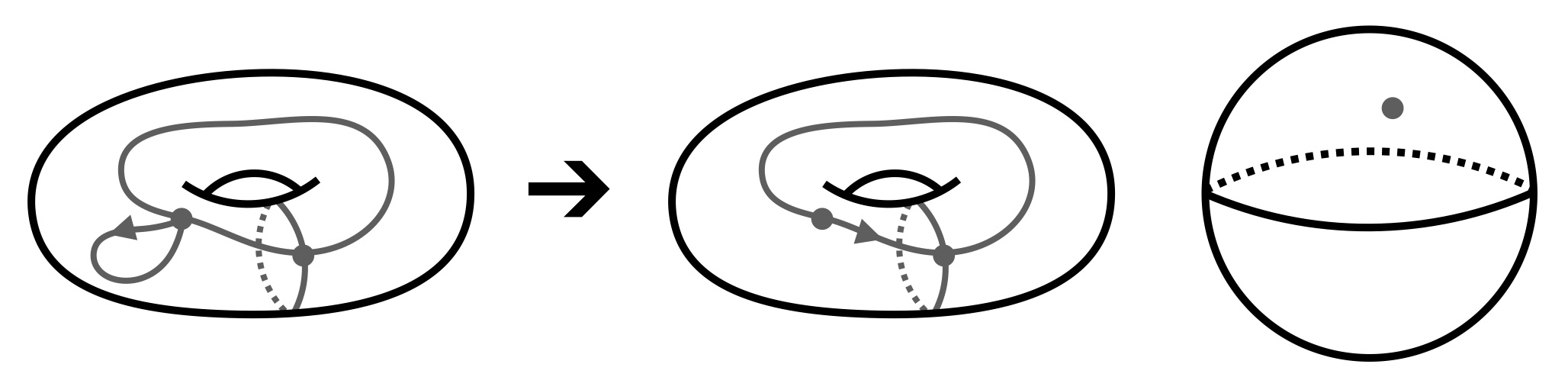}
\caption{}
\label{fig:catalanproof4}
\end{figure}

\end{ex}

\begin{ex}

Contracting the arrowed half-edge in the Catalan graph of degree $6$ on the genus $0$ surface in Figure~\ref{fig:catalanproof5} gives a Catalan graph of degree $2$ on the genus $0$ surface and a Catalan graph of degree $2$ on the genus $0$ surface.

\begin{figure}[hbt]
\includegraphics[height=1.25in]{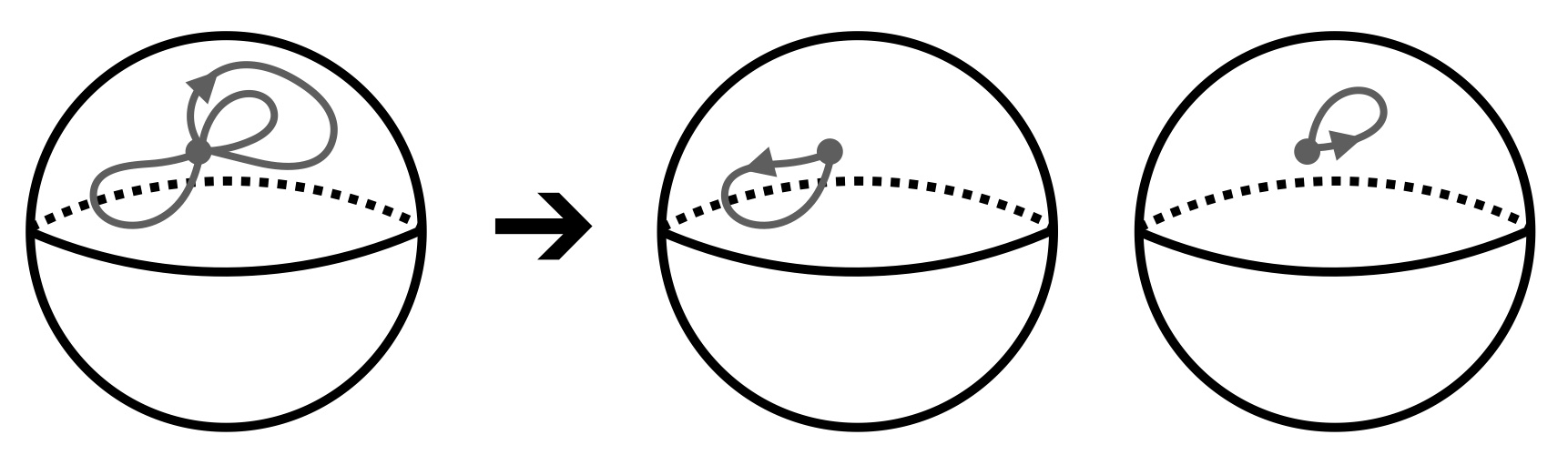}
\caption{}
\label{fig:catalanproof5}
\end{figure}

\end{ex}

To obtain a topological recursion from Proposition~\ref{prop:gencatrec}, we first need to look at the discrete Laplace transform for the generalized Catalan numbers.

This is defined by 
\begin{equation}
\label{eqn:catdlt}
F_{g,v}^C(x_1,x_2,\dots,x_v) = 
\begin{cases}
\displaystyle \sum_{n=1}^{\infty} \frac{C_{0,1}(n)}{n} x_1^{-n} - C_{0,1}(0) \log x_1 \text{ if $(g,v) = (0,1)$}, \\
\displaystyle \sum_{\mu_1=1}^{\infty} \cdots \sum_{\mu_v=1}^{\infty} \frac{C_{g,v}(\mu_1,\mu_2,\dots,\mu_v)}{\mu_1 \mu_2 \cdots \mu_v} x_1^{-\mu_1} x_2^{-\mu_2} \cdots x_v^{-\mu_v}, \\
\end{cases}
\end{equation}
and it can be calculated by a recursion formula. The recursion formula is actually a ``differential'' recursion, on 
$$\frac{\partial}{\partial t_1} F_{g,v}^C(t_1,t_2,\dots,t_v),$$ 
where $(t_1,t_2,\dots,t_v)$ is a particular choice of change of variables, given by
\begin{equation}
\label{eqn:catvarchange}
x_i = 2 + \frac{4}{t_i^2-1}, \qquad i \in \{1,2,\dots,v\}.
\end{equation}

One may compute from the definition of the discrete Laplace transform, using Proposition~\ref{prop:gencatrec} and the above choice of change of variable, that we have
\begin{equation}
\label{eqn:catF01}
\frac{\partial}{\partial t} F_{0,1}^{C}(t) = \frac{8t}{(t+1)(t-1)^3}
\end{equation}
and
\begin{equation}
\label{eqn:catF11}
\frac{\partial}{\partial t_1} F_{0,2}^{C}(t_1,t_2) = \frac{(t_2+1)}{(t_1-1)(t_1+t_2)}.
\end{equation}

More generally, the following result is given in \cite{dumitrescumulaselec}.

\begin{prop}[Dumitrescu, Mulase, Safnuk, Sorkin `13]
\label{prop:dmss13diff}
The discrete Laplace transform $F_{g,v}^C(t_1,t_2,\dots,t_v)$ satisfies the following differential recursion equation for every $(g,v) \neq (0,1),(0,2)$.
\begin{multline}
\label{eqn:catdiffrec}
\frac{\partial}{\partial t_1} F_{g,v}^{C}(t_1,t_2,\dots,t_v) = - \frac{1}{16} \sum_{j=2}^v \bigg[ \frac{t_j}{t_1^2 - t_j^2} \bigg( \frac{(t_1^2-1)^3}{t_1^2} \frac{\partial}{\partial t_1} F_{g,v-1}^{C}(t_1,\dots,\widehat{t_j},\dots,t_v) \\
\begin{aligned}
& - \frac{(t_j^2-1)^3}{t_j^2} \frac{\partial}{\partial t_j} F_{g,v-1}^{C}(t_2,\dots,t_v) \bigg) \bigg] \\
& - \frac{1}{16} \sum_{j=2}^v \frac{(t_1^2-1)^2}{t_1^2} \bigg[ \frac{\partial}{\partial t_1} F_{g,v-1}^{C}(t_1,\dots,\widehat{t_j},\dots,t_v) \bigg] \\
& - \frac{1}{32} \frac{(t_1^2-1)^3}{t_1^2} \frac{\partial}{\partial u_1} \frac{\partial}{\partial u_2} F_{g-1,v+1}^{C}(u_1,u_2,t_2,\dots,t_v) \bigg|_{u_1=u_2=t_1} \\
& - \frac{1}{32} \frac{(t_1^2-1)^3}{t_1^2} \sum_{g_1+g_2=g, \; I \sqcup J = \{2,\dots,v\}, \; \text{stable}} \frac{\partial}{\partial t_1} F_{g_1,|I|+1}^{C}(t_1,t_I) \cdot \frac{\partial}{\partial t_1} F_{g_2,|J|+1}^{C}(t_1,t_J), \\
\end{aligned}
\end{multline}
where the ``stable'' summation means $2g_1 + |I| - 1 > 0$ and $2g_2 + |J| - 1 > 0$.
\end{prop}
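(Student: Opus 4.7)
My plan is to derive the differential recursion by applying the discrete Laplace transform term-by-term to the combinatorial recursion in Proposition~\ref{prop:gencatrec}, and then performing the change of variables $x_i = 2 + 4/(t_i^2-1)$. Concretely, I multiply both sides of \eqref{eqn:gencatrec} by $\frac{1}{\mu_1\mu_2\cdots\mu_v} x_1^{-\mu_1-1} x_2^{-\mu_2} \cdots x_v^{-\mu_v}$, sum over $\mu_1,\dots,\mu_v \ge 1$, and identify the resulting sums with derivatives of the $F^C_{g,\cdot}$. The left-hand side becomes (a multiple of) $\partial_{x_1} F^C_{g,v}$, modulo correction terms coming from the missing $\mu_i = 0$ indices which contribute the initial-condition parts already encoded in \eqref{eqn:catF01}--\eqref{eqn:catF11}.

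For the first sum in \eqref{eqn:gencatrec}, the shift $\mu_1+\mu_j-2 \mapsto n$ produces a convolution in $x_1,x_j$: after reindexing I would write
\begin{equation*}
\sum_{\mu_1,\mu_j \ge 1} \frac{\mu_j}{\mu_1}\, x_1^{-\mu_1}x_j^{-\mu_j}\, a_{\mu_1+\mu_j-2}
\;=\;\sum_{n\ge 0} a_n \sum_{\mu_1+\mu_j=n+2} \tfrac{\mu_j}{\mu_1} x_1^{-\mu_1}x_j^{-\mu_j},
\end{equation*}
and the inner double sum can be evaluated by partial fractions in $(x_1,x_j)$. This is the step that manufactures the kernel $\frac{1}{x_1-x_j}$ together with a separate ``diagonal'' contribution: the $\frac{1}{x_1-x_j}$ piece will ultimately become the $\frac{t_j}{t_1^2-t_j^2}$ in the first line of \eqref{eqn:catdiffrec}, while the diagonal piece becomes the $\frac{(t_1^2-1)^2}{t_1^2}$ term in the second line. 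The loop sum $\sum_{\alpha+\beta=\mu_1-2}$ is a straightforward convolution in $x_1$ alone; after differentiating in $x_1$ it turns into an ordinary product $\partial_{x_1}F^C_{g_1,|I|+1}\cdot\partial_{x_1}F^C_{g_2,|J|+1}$, and the genus-dropping term becomes the $\partial_{u_1}\partial_{u_2}F^C_{g-1,v+1}\big|_{u_1=u_2=x_1}$ expression.

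Next I would perform the change of coordinates. A direct computation gives $\frac{dx_i}{dt_i} = -\frac{8t_i}{(t_i^2-1)^2}$, so that $\partial_{x_1}F^C_{g,v} = -\frac{(t_1^2-1)^2}{8t_1}\, \partial_{t_1} F^C_{g,v}$. Multiplying through by the appropriate power of this Jacobian is what produces the $(t_1^2-1)^3/t_1^2$ prefactors in front of the loop and splitting terms and the constants $-\tfrac{1}{16}$, $-\tfrac{1}{32}$. Symmetrising in $t_j \leftrightarrow -t_j$ (allowable because only even powers of $t_j$ appear in $F^C$ restricted to the relevant symmetry class) is what converts $\frac{1}{x_1-x_j}$ on the $x$-side into $\frac{t_j}{t_1^2-t_j^2}$ on the $t$-side; it is also what cleanly separates the two pieces inside the bracket of the first line of \eqref{eqn:catdiffrec}.

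The main obstacle I anticipate is the first sum, where the shift $\mu_1+\mu_j-2$ prevents a clean factorisation on the $x$-side. To overcome this I would treat it as a Cauchy-type kernel calculation: write the inner double sum as a contour integral in one auxiliary variable, perform the residue on the pole at $x_1=x_j$ (producing the $\frac{(t_1^2-1)^3}{t_1^2}\partial_{t_1}F^C_{g,v-1}(t_1,\dots) - \frac{(t_j^2-1)^3}{t_j^2}\partial_{t_j}F^C_{g,v-1}(t_2,\dots,t_v)$ skew-symmetric combination in the bracket), and separately collect the ``polynomial remainder'' in $x_1^{-1}$ which after the change of variables reorganises into the $\frac{(t_1^2-1)^2}{t_1^2}\partial_{t_1}F^C_{g,v-1}$ term on the second line. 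Matching these residues against \eqref{eqn:catF01}--\eqref{eqn:catF11} at the low base cases $(g,v) = (0,3),(1,1)$ would serve as a sanity check before concluding the general statement.
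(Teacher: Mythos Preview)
Your overall strategy---apply the discrete Laplace transform to the combinatorial recursion \eqref{eqn:gencatrec}, identify each piece with derivatives of the $F^C_{g,\cdot}$, then change variables via \eqref{eqn:catvarchange}---is exactly the route the paper takes (see the $bc$-Motzkin analogue in Section~\ref{sect:diffrecformula} and Appendix~\ref{subsect:appendixdiffrec}, which the paper cites as the proof). But there is a genuine gap in your outline, and a couple of incorrect identifications.

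The main missing idea is the extraction of the \emph{unstable} contributions $(g_1,|I|+1)=(0,1)$ and $(0,2)$ from the splitting sum. Recall that \eqref{eqn:gencatrec} is not a true recursion: the $(g,v)$ term appears again on the right via the factor $C_{0,1}(\alpha)\,C_{g,v}(\beta,\mu_2,\dots,\mu_v)$. After the transform this produces a term $2\,\partial_{x_1}F^C_{0,1}(x_1)\cdot \partial_{x_1}F^C_{g,v}(x_1,\dots,x_v)$ on the right, which must be moved to the left and combined with $x_1\,\partial_{x_1}F^C_{g,v}$; only then can you solve for $\partial_{x_1}F^C_{g,v}$, and it is precisely the known formula \eqref{eqn:catF01} that makes the resulting coefficient collapse nicely under the substitution \eqref{eqn:catvarchange}. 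You never address this step, and without it the argument cannot close. Similarly, the $(0,2)$ unstable piece $2\,\partial_{x_1}F^C_{0,2}(x_1,x_j)\cdot \partial_{x_1}F^C_{g,v-1}(x_1,\dots,\widehat{x_j},\dots)$ must be pulled out and combined with the $\tfrac{1}{x_j-x_1}$ kernel coming from the first sum; it is this \emph{combination} $2\,\partial_{x_1}F^C_{0,2}(x_1,x_j)-\tfrac{1}{x_j-x_1}$ that, after changing to $t$-variables, splits into the $\tfrac{t_j(t_1^2-1)^3}{t_1^2(t_1^2-t_j^2)}$ piece on the first line and the $\tfrac{(t_1^2-1)^2}{t_1^2}$ piece on the second line of \eqref{eqn:catdiffrec}. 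So your attribution of the second line to a ``diagonal piece'' or ``polynomial remainder'' of the first sum alone is incorrect.

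Two smaller points. First, the appropriate weight is $\tfrac{1}{\mu_2\cdots\mu_v}\,x_1^{-\mu_1-1}x_2^{-\mu_2}\cdots x_v^{-\mu_v}$ (no $1/\mu_1$), so that the left side is exactly $-\partial_{x_1}F^C_{g,v}$ and the $\mu_j$ in the first sum cancels against the $1/\mu_j$ in the weight; your $\mu_j/\mu_1$ factor is an artefact of the wrong normalisation. Second, the passage from $\tfrac{1}{x_1-x_j}$ to $\tfrac{t_j}{t_1^2-t_j^2}$ is a direct consequence of the change-of-variables identity $\tfrac{1}{x_j-x_1}=\tfrac{(t_1^2-1)(t_j^2-1)}{4(t_1^2-t_j^2)}$, not of any symmetrisation $t_j\leftrightarrow -t_j$; indeed the functions $F^C_{g,v}$ are \emph{not} even in the $t_j$ (already $\partial_{t_1}F^C_{0,2}(t_1,t_2)=\tfrac{t_2+1}{(t_1-1)(t_1+t_2)}$ shows this), so that symmetrisation is not available.
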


Remarkably, the discrete Laplace transform of the generalized $bc$-Motzkin numbers satisfies an identical differential recursion formula, with the only difference being that the change of variable from $x_i$ to $t_i$ depends also on $b$ and $c$. When $b = 0$ and $c = 1$, it reduces to the same change of variables as in the Catalan case.

The proof of Proposition~\ref{prop:dmss13diff} is very similar to the proof in the $bc$-Motzkin numbers case (see Section~\ref{sect:diffrecformula} and Appendix~\ref{subsect:appendixdiffrec}), hence a proof of this result is not given here.

The differential recursion formula in Proposition~\ref{prop:dmss13diff} leads to a recursion in $2g-2+v$ on the Eynard-Orantin differential forms on $(\mathbb{P}^1)^v$, defined by
\begin{equation}
\label{eqn:eynardorantin}
W_{g,v}^{C}(t_1,t_2,\dots,t_v) = d_{t_1} \cdots d_{t_n} F_{g,v}^{C}(t_1,t_2,\dots,t_v).
\end{equation}
The resulting recursion formula is called the topological recursion for these generalized Catalan numbers.

\begin{prop}[Dumitrescu, Mulase, Safnuk, Sorkin `13]
\label{prop:dmss13top}
For $2g-2+v > 0$, these differential forms satisfy the following integral recursion equation:
\begin{multline}
W_{g,v}^{C}(t_1,t_2,\dots,t_v) = - \frac{1}{64} \frac{1}{2\pi i} \int_{\gamma} \bigg( \frac{1}{t+t_1} + \frac{1}{t-t_1} \bigg) \frac{(t^2-1)^3}{t^2} \, \frac{1}{dt} \, dt_1 \\
\cdot \bigg[ \sum_{j=2}^v \bigg( W_{0,2}^{C}(t,t_j) W_{g,v-1}^{C}(-t,t_2,\dots,\widehat{t_j},\dots,t_v) + W_{0,2}^{C}(-t,t_j) W_{g,v-1}^{C}(t,t_2,\dots,\widehat{t_j},\dots,t_v) \bigg) \\
+ W_{g-1,v+1}^{C}(t,-t,t_2,\dots,t_v) + \sum_{g_1+g_2=g, \; I \sqcup J = \{2,\dots,v\}, \; \text{stable}} W_{g_1,|I|+1}^{C}(t,t_I) W_{g_2,|J|+1}^{C}(-t,t_J) \bigg] 
\end{multline}
Here, $\gamma$ is the contour in Figure~\ref{fig:gammacontour}.
\end{prop}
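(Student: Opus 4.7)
The plan is to derive Proposition~\ref{prop:dmss13top} from the differential recursion of Proposition~\ref{prop:dmss13diff} by applying $d_{t_1}\cdots d_{t_v}$ to both sides of \eqref{eqn:catdiffrec} and re-expressing the rational coefficients on the right as residues of the integrand in the proposed formula. The left side becomes $W^C_{g,v}(t_1,\ldots,t_v)$ directly from definition~\eqref{eqn:eynardorantin}, and each $\partial_{t_1} F^C_{g',v'}$ on the right is upgraded to the corresponding $W^C_{g',v'}$; the missing $dt_j$-factors are furnished by the $W^C_{0,2}(\cdot,t_j)$ that appears naturally on the contour-integral side.

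To invoke the residue theorem, I would first verify that the proposed integrand is a rational $1$-form in $t$ with no residue at $t=\infty$: the kernel $\tfrac{1}{t+t_1}+\tfrac{1}{t-t_1}$ is $O(1/t^2)$, and each $W^C_{g',v'}$ is (inductively on $2g'-2+v'$) a Laurent polynomial in $t$ with poles only at the ramification point $t=0$. The contour $\gamma$ in Figure~\ref{fig:gammacontour} encloses only $t=0$ (and the harmless points $t=\pm 1$, where $(t^2-1)^3$ kills any would-be pole), while the external poles $t=\pm t_1$ and $t=\pm t_j$ lie outside $\gamma$. Hence the integral over $\gamma$ equals minus the sum of residues at these outside poles, which I claim reproduces the right-hand side of \eqref{eqn:catdiffrec}. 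The simple poles at $t=\pm t_1$ coming from $\tfrac{1}{t\mp t_1}$ evaluate the rest of the integrand at $t=\pm t_1$; after the $t\leftrightarrow -t$ symmetrisation built into the bracket, this contributes exactly the $-\tfrac{1}{32}\tfrac{(t_1^2-1)^3}{t_1^2}$ prefactor times the $W_{g-1,v+1}$-diagonal and the stable-splitting sum, i.e.\ the last two lines of \eqref{eqn:catdiffrec}. The double poles at $t=\pm t_j$ coming from $W^C_{0,2}(\pm t,t_j) = \pm\, dt\, dt_j/(t\mp t_j)^2$ are handled via $\operatorname{Res}_{t=t_j} f(t)/(t-t_j)^2 = f'(t_j)$: the derivative either hits the rational prefactor, giving, after the partial-fraction regrouping $\tfrac{1}{t_1^2-t_j^2}\bigl(\tfrac{(t_1^2-1)^3}{t_1^2}-\tfrac{(t_j^2-1)^3}{t_j^2}\bigr)$, the entire first line of \eqref{eqn:catdiffrec}, or it hits $W^C_{g,v-1}(\mp t,\ldots)$, giving the second line with coefficient $\tfrac{(t_1^2-1)^2}{t_1^2}$.

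The anticipated main obstacle is the sign and constant bookkeeping in the double-pole calculation at $t=\pm t_j$: one must track carefully the minus signs from the Galois involution $t\mapsto -t$ appearing both in $W^C_{0,2}(-t,t_j)$ and in the chain-rule derivative of $W^C_{g,v-1}(-t,\ldots)$, and verify that the symmetric and antisymmetric pieces of the residue split cleanly into the two rational coefficients $\tfrac{t_j}{t_1^2-t_j^2}$ and $\tfrac{(t_1^2-1)^2}{t_1^2}$ with exactly the overall factor $-\tfrac{1}{16}$. A subsidiary technical point is justifying inductively that every $W^C_{g,v}$ with $2g-2+v>0$ is meromorphic in each argument with poles only at the ramification point $t=0$, so that no hidden singularity inside $\gamma$ contributes an unwanted residue.
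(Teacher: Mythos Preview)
Your overall strategy---apply $d_{t_2}\cdots d_{t_v}$ to \eqref{eqn:catdiffrec}, evaluate the contour integral as minus the sum of residues at the external poles $t=\pm t_1,\pm t_j$, and match term by term---is exactly the route the paper takes. However, your bookkeeping of which residue produces which line of \eqref{eqn:catdiffrec} is off in two places, and this is where your anticipated ``main obstacle'' actually bites.

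First, the second line of \eqref{eqn:catdiffrec} does \emph{not} survive the passage to differential forms: the summand $\frac{(t_1^2-1)^2}{t_1^2}\,\partial_{t_1}F^C_{g,v-1}(t_1,\ldots,\widehat{t_j},\ldots,t_v)$ has no $t_j$-dependence whatsoever, so applying $d_{t_j}$ kills it outright. It is not matched to any residue. Second, the residues at $t=\pm t_1$ do more than you say: they pick up not only the $W_{g-1,v+1}$ and stable-splitting terms, but also the $W_{0,2}(\pm t,t_j)W_{g,v-1}(\mp t,\ldots)$ terms, since $\frac{1}{t\mp t_1}$ multiplies the entire bracket. This extra contribution gives the piece $\frac{(t_1^2+t_j^2)(t_1^2-1)^3}{t_1^2(t_1^2-t_j^2)^2}\,W^C_{g,v-1}(t_1,\ldots,\widehat{t_j},\ldots,t_v)$, which is one half of what line one of \eqref{eqn:catdiffrec} becomes after $d_{t_j}$ hits the coefficient $\frac{t_j}{t_1^2-t_j^2}$.

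The double-pole residues at $t=\pm t_j$ then supply the \emph{other} half of line one, namely $\partial_{t_j}\bigl(\frac{(t_j^2-1)^3}{t_j(t_1^2-t_j^2)}\,W^C_{g,v-1}(t_2,\ldots,t_v)\bigr)$. Your product-rule split (``derivative hits prefactor'' vs.\ ``derivative hits $W_{g,v-1}$'') does not lead there cleanly; the efficient regrouping is to factor the integrand (with $(t-t_j)^2$ removed) as $\frac{t^2+t_j^2}{(t+t_j)^2}\cdot g(t)$ and observe that $\partial_t\frac{t^2+t_j^2}{(t+t_j)^2}\big|_{t=t_j}=0$, so the residue is simply $\tfrac12\,g'(t_j)$. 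With these two corrections your outline goes through and coincides with the paper's argument.
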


To prove this result, one may apply the definition of the Eynard-Orantin differential forms to the differential recursion formula in Proposition~\ref{prop:dmss13diff} to obtain a formula for the $W_{g,v}^{C}(t_1,t_2,\dots,t_v)$, then simplify the integral around the contour $\gamma$ on the right side of the equation in Proposition~\ref{prop:dmss13top} to show that the two formulas for $W_{g,v}^{C}(t_1,t_2,\dots,t_v)$ are indeed equal.

Similarly to the case of the differential recursion formula, the generalized $bc$-Motzkin numbers satisfy an identical result, again with the only difference being that the change of variables from $x_i$ to $t_i$ now depends on $b$ and $c$. Since the differential recursion formula is identical to the (more general) $bc$-Motzkin numbers case, a proof of the Catalan case is not given here. The proof of the topological recursion for generalized $bc$-Motzkin numbers is given in Section~\ref{sect:toprecformula} and Appendix~\ref{subsect:appendixtoprec}.


\section{Higher Genus $bc$-Motzkin Numbers}
\label{sect:highergenus}

Recall that in Section~\ref{sect:introbasicdefs} we defined the $bc$-Motzkin numbers $M_n(b,c)$ by equation \eqref{eqn:bcmotzkin}, following the definition in \cite{sun}. Now, we will define a slightly different version of $bc$-Motzkin numbers, which are better suited for our purposes in this paper, by 
\begin{equation}
\widetilde{M}_{0,1}(n;b,c) = \sum_{k=0}^{\lfloor n/2 \rfloor} \binom{n}{2k} C_k b^{n-2k} c^{2k},
\end{equation}
where $b \in \mathbb{R}_{\geq0}$ and $c \in \mathbb{R}_{>0}$.

The difference is that we now write $c^{2k}$ rather than $c^{k}$, and we also allow $b$ to be a nonnegative real number, and we allow $c$ to be a positive real number, instead of restricting to the natural numbers.

Thus, we may write these $(0,1)$-$bc$-Motzkin numbers in terms of the $(0,1)$-Catalan numbers as
\begin{equation}
\label{eqn:bcmotzkin01}
\widetilde{M}_{0,1}(n;b,c) = \sum_{\mu=0}^{n} \binom{n}{\mu} C_{0,1}(\mu) b^{n-\mu} c^{\mu}.
\end{equation}

The $(0,1)$-$bc$-Motzkin numbers satisfy a recursion formula, similar to the $(0,1)$-Catalan numbers. 

\begin{prop}
\label{prop:bcM01rec}
For $n \geq 1$, the $(0,1)$-$bc$-Motzkin numbers $M_{0,1}(n;b,c)$ satisfy
\begin{equation}
\widetilde{M}_{0,1}(n;b,c) - b \, \widetilde{M}_{0,1}(n-1;b,c) = c^2 \sum_{\alpha+\beta=n-2} \widetilde{M}_{0,1}(\alpha,b,c) \widetilde{M}_{0,1}(n_2,b,c).
\end{equation}
\end{prop}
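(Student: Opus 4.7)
The plan is to prove this via generating functions, which converts the identity into a transparent functional equation. Let $f(x) = \sum_{n=0}^{\infty} \widetilde{M}_{0,1}(n;b,c)\, x^n$. Swapping the order of summation in the definition \eqref{eqn:bcmotzkin01} and applying the standard identity $\sum_{n \geq \mu} \binom{n}{\mu}(bx)^{n-\mu} = (1-bx)^{-(\mu+1)}$ yields
$$f(x) \;=\; \frac{1}{1-bx}\sum_{\mu=0}^{\infty} C_{0,1}(\mu)\!\left(\frac{cx}{1-bx}\right)^{\!\mu} \;=\; \frac{1}{1-bx}\, g\!\left(\frac{cx}{1-bx}\right),$$
where $g(y) = \sum_{\mu \geq 0} C_{0,1}(\mu)\, y^{\mu}$.

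Because $C_{0,1}$ is the aerated Catalan sequence, $g(y) = C(y^{2})$ for the usual Catalan generating function $C(z) = \sum_{k} C_{k} z^{k}$, which satisfies $C(z) = 1 + z\, C(z)^{2}$. Hence $g(y) = 1 + y^{2} g(y)^{2}$. Setting $y = cx/(1-bx)$ and using $g(y) = (1-bx)\, f(x)$, this collapses to the clean quadratic
$$(1-bx)\, f(x) \;=\; 1 \;+\; c^{2} x^{2}\, f(x)^{2}.$$
Extracting the coefficient of $x^n$ on both sides for $n \geq 1$, the left-hand side produces $\widetilde{M}_{0,1}(n;b,c) - b\, \widetilde{M}_{0,1}(n-1;b,c)$, while the right-hand side produces $c^{2}\sum_{\alpha+\beta=n-2}\widetilde{M}_{0,1}(\alpha;b,c)\,\widetilde{M}_{0,1}(\beta;b,c)$ (an empty sum, hence zero, when $n=1$). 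This is exactly the claimed identity, and it also clarifies that the symbol ``$n_{2}$'' in the statement is a typographical slip for $\beta$.

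A purely combinatorial route is also available and worth noting as a sanity check: apply Pascal's rule to obtain
$$\widetilde{M}_{0,1}(n;b,c) - b\,\widetilde{M}_{0,1}(n-1;b,c) \;=\; \sum_{\mu=0}^{n} \binom{n-1}{\mu-1}\, C_{0,1}(\mu)\, b^{n-\mu} c^{\mu},$$
invoke the $(g,v) = (0,1)$ case of Proposition~\ref{prop:gencatrec} to rewrite $C_{0,1}(\mu) = \sum_{\alpha+\beta = \mu-2} C_{0,1}(\alpha)\,C_{0,1}(\beta)$, and then compare against the direct expansion of the right-hand side. After grouping by $\mu_{1}+\mu_{2}$ the comparison reduces to the Vandermonde convolution
$$\sum_{\substack{\alpha+\beta = n-2 \\ \alpha \geq \mu_{1},\; \beta \geq \mu_{2}}} \binom{\alpha}{\mu_{1}}\binom{\beta}{\mu_{2}} \;=\; \binom{n-1}{\mu_{1}+\mu_{2}+1}.$$
The main ``obstacle'' in either route is purely bookkeeping: tracking the edge cases $\mu = 0, 1$ and $n \leq 2$ (empty convolution sums, $C_{0,1}(1) = 0$ removing a stray term), plus verifying the index shift that produces the $+1$ in $\binom{n-1}{\mu_{1}+\mu_{2}+1}$. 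Once the identification $(1-bx)\,f(x) = C\!\left((cx/(1-bx))^{2}\right)\cdot \text{(nothing)} = 1 + c^2x^2 f(x)^2$ is in hand, the recursion is essentially forced.
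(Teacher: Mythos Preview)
Your generating-function argument is correct and complete: the substitution $f(x) = \tfrac{1}{1-bx}\,g\!\bigl(\tfrac{cx}{1-bx}\bigr)$ combined with the Catalan functional equation $g(y) = 1 + y^{2}g(y)^{2}$ indeed yields $(1-bx)f(x) = 1 + c^{2}x^{2}f(x)^{2}$, and coefficient extraction gives the recursion immediately.

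This main route is genuinely different from the paper's. The paper works directly with binomial sums: it applies Pascal's identity to obtain $\sum_{\mu}\binom{n-1}{\mu-1}C_{0,1}(\mu)\,b^{n-\mu}c^{\mu}$, substitutes the Catalan recursion for $C_{0,1}(\mu)$, and then invokes the ``Vandermonde-like'' identity $\sum_{a+b=n}\binom{a}{i}\binom{b}{j} = \binom{n+1}{i+j+1}$ (proved separately in an appendix) to regroup into the product form. Your secondary ``sanity check'' paragraph is essentially a sketch of exactly this argument. What your generating-function approach buys is that all the binomial bookkeeping --- Pascal, the Vandermonde-like convolution, the edge cases --- is absorbed into the single formal identity $\sum_{n\ge\mu}\binom{n}{\mu}(bx)^{n-\mu} = (1-bx)^{-(\mu+1)}$, so the proof is shorter and requires no auxiliary lemma. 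What the paper's approach buys is that it stays at the level of coefficients throughout, which makes it a transparent warm-up for the general $(g,v)$ recursion in Theorem~\ref{thm:genbcmrec}, where no such clean generating-function shortcut is available and the same Vandermonde-like identity is reused.
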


This recursion formula was initially proved by Wang and Zhang in \cite{wangzhang}. A proof of this recursion formula is also given in Appendix~\ref{subsect:appendix01proof}. 

We would like to generalize the $(0,1)$-$bc$-Motzkin numbers, just as was done for Catalan numbers. To do this, we first construct an analogy with colorings of Catalan graphs on the genus $g$ surface. We are led to make the following new definition.

\begin{Def}
\label{def:bcmcoloring}
Let 
$$\Gamma_{g,v}(n_1,n_2,\dots,n_v;\mu_1,\mu_2,\dots,\mu_v)$$ 
denote the number of ways to color Catalan graphs of degree $(\mu_1,\mu_2,\dots,\mu_v)$ on the genus $g$ surface, subject to the following requirements, for all $i \in \{1,2,\dots,v\}$.
\begin{enumerate}
\item The degree $\mu_i$ of the $i$th vertex is less than or equal to the number of colors $n_i$ with which the half-edges adjacent to that vertex can be colored.
\item We choose $\mu_i$ colors from the set of $n_i$ colors with which to color the half-edges adjacent to the $i$th vertex.
\item The set of $n_i$ colors associated with the $i$th vertex is ordered. Of the $\mu_i$ colors chosen from this set, the lowest-indexed color is assigned to the arrowed half-edge incident on that vertex, and the colors increase in ordering as we traverse the edges by going counterclockwise around the vertex.
\end{enumerate}
\end{Def}

Then, we define the generalized $bc$-Motzkin number to be equal to the following weighted sum of cardinalities of this set:
\begin{multline}
\widetilde{M}_{g,v}(n_1,n_2,\dots,n_v;b,c) = \sum_{\mu_1=0}^{n_1} \sum_{\mu_2=0}^{n_2} \cdots \sum_{\mu_v=0}^{n_v} |\Gamma_{g,v}(n_1,n_2,\dots,n_v;\mu_1,\mu_2,\dots,\mu_v)| \\
\cdot b^{(n_1+n_2+\cdots+n_v)-(\mu_1+\mu_2+\cdots+\mu_v)} c^{\mu_1+\mu_2+\cdots+\mu_v}.
\end{multline}
Here, $b \in \mathbb{R}_{\geq0}$ and $c \in \mathbb{R}_{>0}$.

\begin{ex}
The degree $(6,8)$ Catalan graph in Figure~\ref{fig:image2} has been colored according to the requirements in Definition~\ref{def:bcmcoloring}, with $6$ colors $(k_2^{(1)},k_3^{(1)},k_5^{(1)},k_6^{(1)},k_7^{(1)},k_{10}^{(1)})$ chosen from a set of $10$ possible colors and assigned to the first vertex $v_1$, going counterclockwise around this vertex with respect to the orientation on this genus $2$ surface and starting with the lowest indexed color on the arrowed half-edge in the underlying Catalan graph. And, $8$ colors $(k_1^{(2)},k_2^{(2)},k_3^{(2)},k_4^{(2)},k_5^{(2)},k_6^{(2)},k_7^{(2)},k_8^{(2)})$ have been chosen from a set of $8$ possible colors and assigned to the second vertex $v_2$. (The superscript $^{(1)}$ on the first set of colors denotes that they are to be assigned to the vertex $v_1$, and similarly for $v_2$.) 

\begin{figure}[hbt]
\includegraphics[height=1.5in]{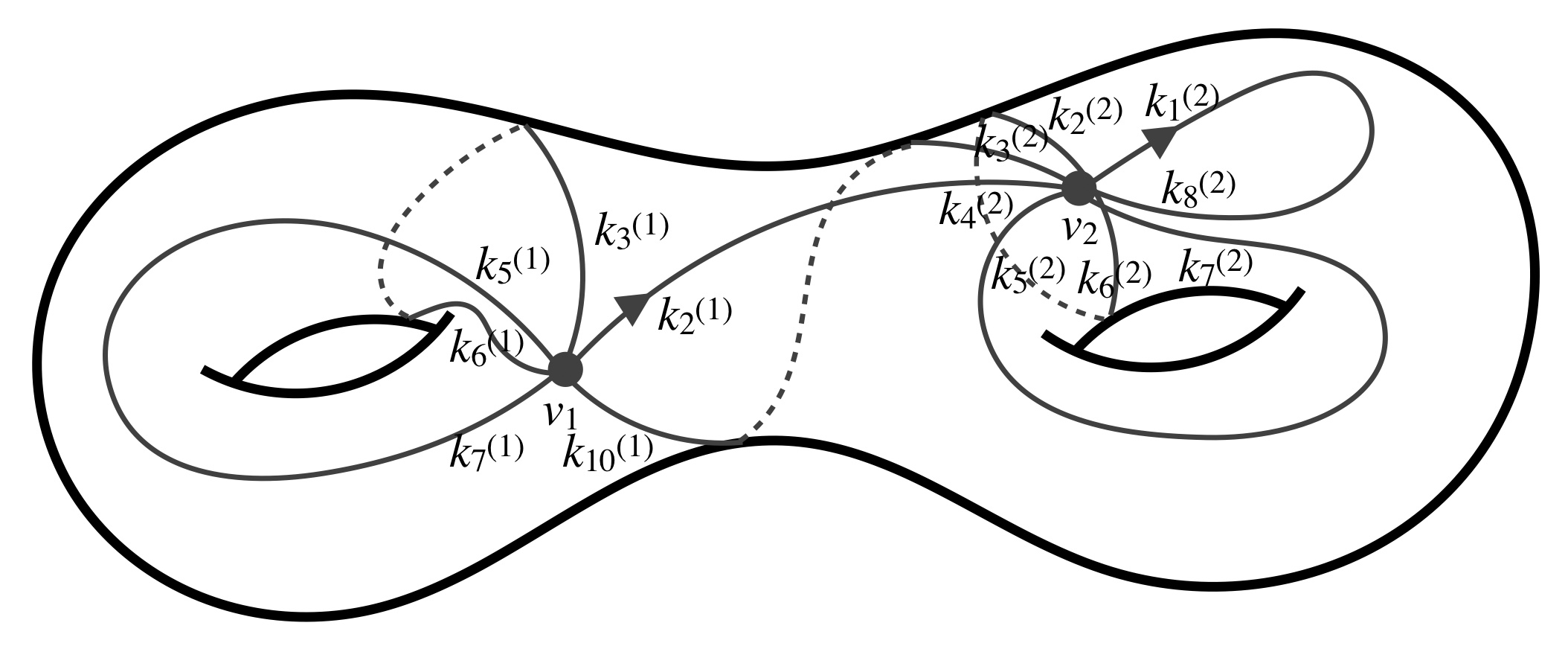}
\caption{}
\label{fig:image2}
\end{figure}

\end{ex}

\begin{rem}
In the case when $g=0$ and $v=1$, we see that Definition~\ref{def:bcmcoloring} gives
$$\widetilde{M}_{0,1}(n;b,c) = \sum_{\mu=0}^n |\Gamma_{0,1}(n;\mu)| \, b^{n-\mu} c^{\mu},$$
and $|\Gamma_{0,1}(n;\mu)| = \binom{n}{\mu} C_{0,1}(\mu)$, since once we have chosen $\mu$ different colors from the set of $n$ possible colors, this uniquely determines the coloring on the Catalan graph of degree $\mu$. Thus, this definition does indeed coincide with our earlier definition of $(0,1)$-$bc$-Motzkin numbers in equation \eqref{eqn:bcmotzkin01}.
\end{rem}

More generally, we have the following equivalent definition of the generalized $bc$-Motzkin numbers, in terms of the generalized Catalan numbers defined in Section~\ref{sect:catbackground}.

\begin{Def}
\label{def:genbcmotzkin}
For $b,c \in \mathbb{R}$ with $b \geq 0$ and $c > 0$, we define the generalized $bc$-Motzkin numbers by 
\begin{multline}
\label{eqn:genbcmotzkin}
\widetilde{M}_{g,v}(n_1,n_2,\dots,n_v;b,c) = \sum_{\mu_1=0}^{n_1} \sum_{\mu_2=0}^{n_2} \cdots \sum_{\mu_v=0}^{n_v} \binom{n_1}{\mu_1} \binom{n_2}{\mu_2} \cdots \binom{n_v}{\mu_v} \\
\cdot C_{g,v}(\mu_1,\mu_2,\dots,\mu_v) \, b^{(n_1+n_2+\cdots+n_v)-(\mu_1+\mu_2+\cdots+\mu_v)}c^{\mu_1+\mu_2+\cdots+\mu_v}.
\end{multline}
\end{Def}

Note that, as expected, this definition is symmetric in its arguments $n_i$, since the generalized Catalan numbers are symmetric in their arguments $\mu_i$. 

\begin{rem}
If $(b,c) = (0,1)$, then we recover the generalized Catalan numbers,
\begin{equation}
\widetilde{M}_{g,v}(n_1,n_2,\dots,n_v;0,1) = C_{g,v}(n_1,n_2,\dots,n_v).
\end{equation}
\end{rem}

Hence these generalized $bc$-Motzkin numbers are also a generalization of Catalan numbers.


\section{The Recursion Formula for Generalized $bc$-Motzkin Numbers}
\label{sect:recformula}

We wish to obtain a recursion formula for generalized $bc$-Motzkin numbers. To do this, we first observe that the generalized $bc$-Motzkin numbers are defined in terms of the generalized Catalan numbers in Definition~\ref{def:genbcmotzkin}, and Proposition~\ref{prop:gencatrec} gives a ``recursion'' formula for the generalized Catalan numbers. Thus, we are led to prove the following result. 

\begin{thm}
\label{thm:genbcmrec}
The generalized $bc$-Motzkin numbers satisfy the following formula:

\begin{multline}
\widetilde{M}_{g,v}(n_1,n_2,\dots,n_v;b,c) - b \widetilde{M}_{g,v}(n_1-1,n_2,\dots,n_v;b,c) \\
\begin{aligned}
& =  c^2 \bigg\{\sum_{j=2}^v n_j \, \widetilde{M}_{g,v-1} (n_1+n_j-2, n_2, \dots, \hat{n_j}, \dots, n_v; b; c) \\
& \quad + \sum_{\zeta+\xi=n_1-2} \bigg[ \widetilde{M}_{g-1,v+1}(\zeta,\xi,n_2,\dots,n_v;b,c) \\
& \quad + \sum_{g_1+g_2=g, \; I \sqcup J = \{2,\dots,v\}} \widetilde{M}_{g_1,|I|+1}(\zeta,n_I;b,c)\widetilde{M}_{g_2,|J|+1}(\xi,n_J;b,c) \bigg] \bigg\} \\
\end{aligned}
\end{multline}

\end{thm}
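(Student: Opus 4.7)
The approach is to reduce the identity directly to the generalized Catalan recursion of Proposition~\ref{prop:gencatrec} using only Pascal's rule and two classical binomial convolution identities: the Vandermonde convolution $\sum_{m}\binom{a}{r-m}\binom{b}{m}=\binom{a+b}{r}$ and its hockey-stick variant $\sum_{\zeta+\xi=N}\binom{\zeta}{\alpha}\binom{\xi}{\beta}=\binom{N+1}{\alpha+\beta+1}$.

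First I would expand both $\widetilde{M}_{g,v}$ terms on the left via Definition~\ref{def:genbcmotzkin}. Since the corresponding $(\mu_1,\dots,\mu_v)$-terms share the same powers of $b$ and $c$, Pascal's rule $\binom{n_1}{\mu_1}-\binom{n_1-1}{\mu_1}=\binom{n_1-1}{\mu_1-1}$ collapses the two sums into
\[
\sum_{\mu_1\geq 1}\sum_{\mu_2,\dots,\mu_v}\binom{n_1-1}{\mu_1-1}\prod_{i=2}^v\binom{n_i}{\mu_i}\,C_{g,v}(\mu_1,\dots,\mu_v)\,b^{\sum n_i-\sum\mu_i}\,c^{\sum\mu_i}.
\]
Into this I would substitute Proposition~\ref{prop:gencatrec} applied to $C_{g,v}(\mu_1,\dots,\mu_v)$, which splits the expression into three families of terms: an edge-contraction family indexed by $j\in\{2,\dots,v\}$, a nonseparating loop contribution $C_{g-1,v+1}(\alpha,\beta,\mu_2,\dots,\mu_v)$, and a separating loop contribution $C_{g_1,|I|+1}(\alpha,\mu_I)\,C_{g_2,|J|+1}(\beta,\mu_J)$, the latter two indexed by $\alpha+\beta=\mu_1-2$.

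Each family must then be identified with a term on the right. For the edge-contraction family, I would use $\mu_j\binom{n_j}{\mu_j}=n_j\binom{n_j-1}{\mu_j-1}$, substitute $\nu=\mu_1+\mu_j-2$, and apply Vandermonde convolution to $\sum_{m}\binom{n_1-1}{\nu-m}\binom{n_j-1}{m}$; the result is $n_j\binom{n_1+n_j-2}{\nu}$, i.e.\ exactly the binomial appearing in $n_j\,\widetilde{M}_{g,v-1}(n_1+n_j-2,n_2,\dots,\widehat{n_j},\dots,n_v)$. For both loop families, the substitution $\mu_1=\alpha+\beta+2$ turns $\binom{n_1-1}{\mu_1-1}$ into $\binom{n_1-1}{\alpha+\beta+1}$, and the hockey-stick identity rewrites this as $\sum_{\zeta+\xi=n_1-2}\binom{\zeta}{\alpha}\binom{\xi}{\beta}$, exactly the outer convolution on the right; the same substitution contributes the two extra $c$'s in $c^{\mu_1}=c^{\alpha+\beta+2}$, accounting for the global factor $c^2$. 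The main obstacle is the bookkeeping in the separating-loop family, where each factor $\widetilde{M}_{g_i,|I_i|+1}$ brings its own binomial expansion over its $\mu_I$- or $\mu_J$-indices; these must recombine, after the $\zeta+\xi=n_1-2$ convolution, into the single binomial $\binom{n_1-1}{\alpha+\beta+1}$. The degenerate cases $\mu_1\in\{0,1\}$ are automatic: $\mu_1=0$ is killed by $\binom{n_1-1}{-1}=0$, and for $\mu_1=1$ only the edge-contraction terms contribute, in agreement with the empty $\alpha+\beta=-1$ summation on the right.
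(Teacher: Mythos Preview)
Your proposal is correct and follows essentially the same route as the paper's algebraic proof in Appendix~\ref{subsect:appendixrecursion}: reduce the left side via Pascal's rule to a sum weighted by $\binom{n_1-1}{\mu_1-1}$, insert the Catalan recursion of Proposition~\ref{prop:gencatrec}, and then identify the three families of terms using $\mu_j\binom{n_j}{\mu_j}=n_j\binom{n_j-1}{\mu_j-1}$ together with Vandermonde's identity and the ``Vandermonde-like'' identity $\sum_{\zeta+\xi=N}\binom{\zeta}{\alpha}\binom{\xi}{\beta}=\binom{N+1}{\alpha+\beta+1}$ (Proposition~\ref{prop:vandermondelike}). The paper also gives a separate combinatorial proof via the graph-coloring interpretation in Section~\ref{sect:recformula}, which you do not attempt, but the algebraic argument is the one you have reproduced.

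One small point worth noting: your treatment of the two loop families is actually cleaner than the paper's. You apply the identity $\binom{n_1-1}{\alpha+\beta+1}=\sum_{\zeta+\xi=n_1-2}\binom{\zeta}{\alpha}\binom{\xi}{\beta}$ uniformly to both the nonseparating and separating contributions, whereas the paper handles the nonseparating term $\text{II}_{g,v}$ by first invoking the hockey-stick identity, then a shifted version $\sum_{\zeta+\xi=k}\binom{\zeta-1}{\alpha-1}\binom{\xi}{\beta}=\binom{k}{\alpha+\beta}$, and finally a telescoping sum in $k$. Your direct approach avoids this detour, and the ``main obstacle'' you flag in the separating-loop bookkeeping is no worse than in the nonseparating case---the product $\widetilde{M}_{g_1,|I|+1}\,\widetilde{M}_{g_2,|J|+1}$ factors cleanly once $\binom{\zeta}{\alpha}\binom{\xi}{\beta}$ is in place, exactly as the paper does for $\text{III}_{g,v}$.
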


Observe that, as in the Catalan case, this is not truly a recursion formula, since the $(g,v)$ term also appears on the right side of the equation. 

\begin{rem}
The $(0,1)$ case of Theorem~\ref{thm:genbcmrec}, for the $(0,1)$-$bc$-Motzkin numbers, is the same as in Proposition~\ref{prop:bcM01rec} of Section~\ref{sect:highergenus}, as expected. Further, when $b = 0$ and $c = 1$, we recover the formula in Proposition~\ref{prop:gencatrec} for generalized Catalan numbers.
\end{rem} 

The above theorem may be proved algebraically, and a complete proof is given in Appendix~\ref{subsect:appendixrecursion}. In brief, one applies Definition~\ref{def:genbcmotzkin} of the generalized $bc$-Motzkin numbers and Pascal's identity to simplify the left side of the above relation, then plugs in the Catalan formula from Proposition~\ref{prop:gencatrec}, simplifies, and re-writes the result entirely in terms of generalized $bc$-Motzkin numbers. The key ingredients in this proof are Vandermonde’s identity,
\begin{equation}
\sum_{i+j=k}  \binom{a}{i}  \binom{b}{j} =  \binom{a + b}{k},
\end{equation}
and the ``Vandermonde-like'' identity
\begin{equation}
\sum_{a+b=n} \binom{a}{i} \binom{b}{j} = \binom{n+1}{i+j+1},
\end{equation}
of which a proof is provided in Appendix~\ref{subsect:appendixvandermonde}.

Alternatively, this theorem can be proved combinatorially, using Definition~\ref{def:bcmcoloring} for generalized $bc$-Motzkin numbers in terms of the graph coloring analogy, and following the same approach as for the proof of the Catalan ``recursion'' formula.

In particular, referencing the proof of Proposition~\ref{prop:gencatrec} in Section~\ref{sect:catbackground} for the Catalan case, this combinatorial proof goes as follows.

\begin{proof}

Recall that $\widetilde{M}_{g,v}(n_1,n_2,\dots,n_v;b,c)$ equals the number of ways to color all Catalan graphs of degree $(\mu_1,\mu_2,\dots,\mu_v)$ on the genus $g$ surface and having $v$ vertices, subject to the coloring requirements in Definition~\ref{def:bcmcoloring}, where $\mu_i \leq n_i$ for all $i$, and the sum of cardinalities of sets of colored Catalan graphs is weighted in a particular way by $b$ and $c$. 

Now, in each underlying Catalan graph, we fix the color on the arrowed half-edge incident on $v_1$  to be the lowest indexed color in the set of possible colors for $v_1$, call it $k_1^{(1)}$. Then, the resulting number of ways to color such Catalan graphs, subject to the necessary restrictions and weightings, is
$$\widetilde{M}_{g,v}(n_1,n_2,\dots,n_v;b,c) - b \widetilde{M}_{g,v}(n_1-1,n_2,\dots,n_v;b,c).$$
This can be seen by observing that $\widetilde{M}_{g,v}(n_1-1,n_2,\dots,n_v;b,c)$ gives the number of ways to color the graphs in the definition of $\widetilde{M}_{g,v}(n_1,n_2,\dots,n_v;b,c)$ without using the color $k_1^{(1)}$ on the vertex $v_1$. The exponent of $b$ essentially counts the difference between the number of possible colors which can be assigned to a vertex and the degree of that vertex.

To show that this equals the right side of the above formula, we will to contract the edge $E$ corresponding to the edge incident on $v_1$ which is colored by $k_1^{(1)}$. As in the proof of the analogous result for generalized Catalan numbers, there are two cases.

\emph{Case 1.}  Assume $E$ connects $v_1$ and $v_j$ ($j \neq 1$). 

Contracting $E$ replaces the two vertices $v_1$ and $v_j$ with one vertex, call it $v$. Since there were $n_1$ possible colors which could be assigned to $v_1$, and $n_j$ possible colors which could be assigned to $v_j$, we see that there are $n_1 + n_j - 2$ possible colors which can be assigned to this resulting vertex $v$.  

For notational convenience, assume that the colors on vertex $v_1$ are $(k_{1}^{(1)}, k_{a_2}^{(1)}, \dots k_{a_{\mu_1}}^{(1)})$ and the colors on vertex $v_j$ are $(k_{b_1}^{(j)}, k_{b_2}^{(j)}, \dots k_{b_{\mu_j}}^{(j)})$, with the color on the other half-edge of $E$ being $k_{b_{\ell}}^{(j)}$. We will make the convention that all colors on vertex $v_j$ have higher index than those on vertex $v_1$, so that our list of colors on the vertex $v$ is, in order,
$$(k_{a_2}^{(1)}, \dots k_{a_{\mu_1}}^{(1)}, k_{b_1}^{(j)}, \dots, \widehat{k_{b_{\ell}}^{(j)}}, \dots k_{b_{\mu_j}}^{(j)}).$$ 

Then, we color the half-edges incident on the new vertex $v$ starting with the lowest indexed color $k_{a_2}^{(1)}$ on the half-edge that was next to $E$ with respect to the counterclockwise cyclic ordering coming from the orientation on the surface, and continue by going counterclockwise around the vertex $v$. Observe that this will not change the coloring on the half-edges that were originally assigned to $v_1$. (See Figure~\ref{fig:bcmotzkinproof1}.)

\begin{figure}[hbt]
\includegraphics[height=1.25in]{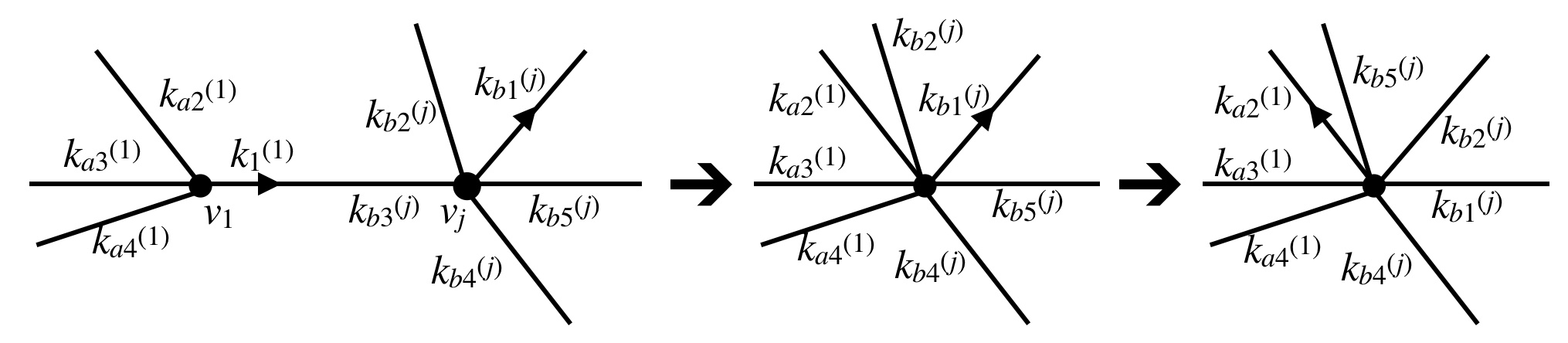}
\caption{}
\label{fig:bcmotzkinproof1}
\end{figure}

Since there are $n_j$ choices for the color $k_{b_{\ell}}^{(j)}$, we see that there are $n_j$ different graphs which produce the same result, which gives the factor of $n_j$ in the sum.

Further, since the weight $c$ counts the degree of the underlying Catalan graph, and the graph resulting from contracting edge $E$ has degree two less than the original graph, we thus have a factor of $c^2$ in front of the resulting term.

This gives the first term in the above formula,
$$c^2 \sum_{j=2}^v n_j \, \widetilde{M}_{g,v-1} (n_1+n_j-2, n_2, \dots, \hat{n_j}, \dots, n_v; b; c).$$

\emph{Case 2.} Assume $E$ is a loop on $v_1$. 

Just as in the proof of the analogous result for generalized Catalan numbers, contracting $E$ separates the incident half-edges at $v_1$ into two collections, with $\alpha$ edges on one side and $\beta$ edges on the other (note that $\alpha$ or $\beta$ may be zero). Since $E$ is a loop, contracting it causes pinching on the surface and produces a double point. We separate the double point into two new vertices. The result may be a surface of genus $g-1$, or two surfaces, of genus $g_1$ and $g_2$ with $g_1+g_2 = g$. Note that we do not need to re-assign the colorings in this case.

Observe that, since we contracted an edge which was incident twice on $v_1$, there are two less colors with which we can color the resulting vertices. Further, the remaining colors must now be shared between two vertices. (See Figure~\ref{fig:bcmotzkinproof2}.)

\begin{figure}[hbt]
\includegraphics[height=1.25in]{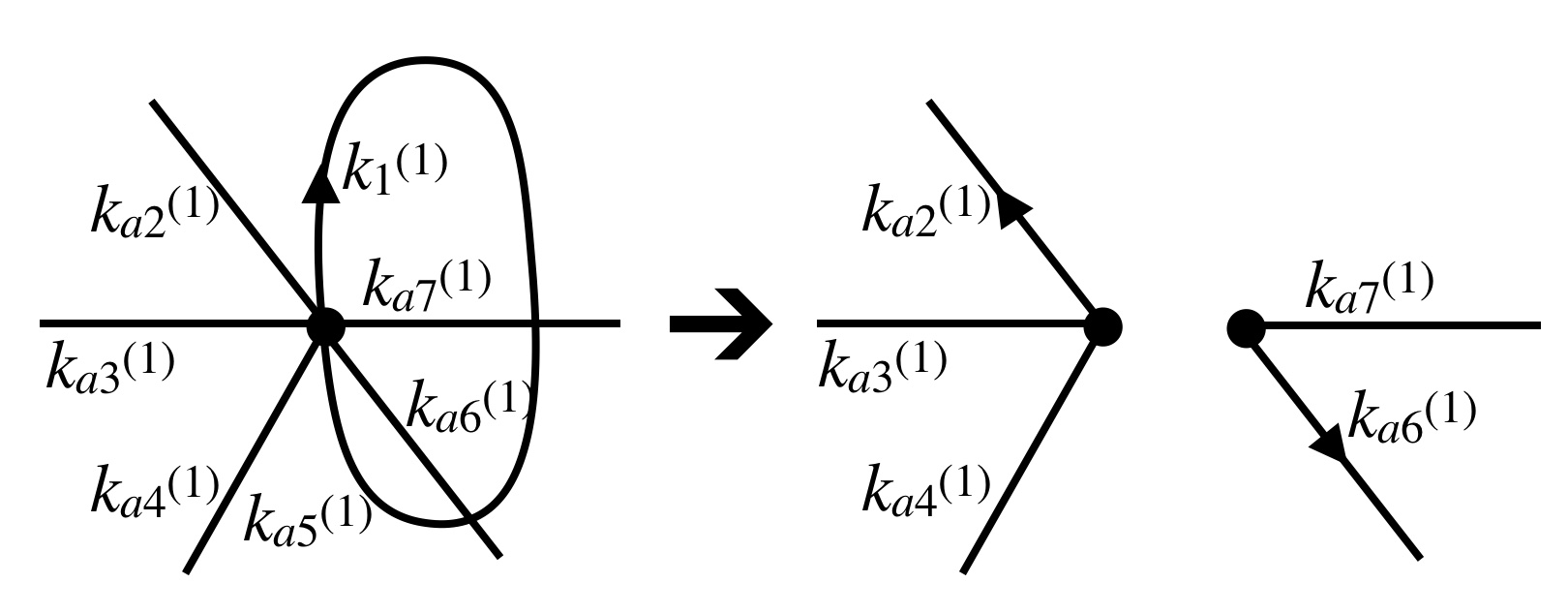}
\caption{}
\label{fig:bcmotzkinproof2}
\end{figure}

As in the previous case, since the weight $c$ counts the degree of the underlying Catalan graph, and the sum of the degrees of the graphs resulting from contracting edge $E$ is two less than the degree of the original graph, we are left with a factor of $c^2$ in front of the resulting terms.

This gives the remaining terms in the above formula,
\begin{multline*}
c^2 \sum_{\zeta+\xi=n_1-2} \bigg[ \widetilde{M}_{g-1,v+1}(\zeta,\xi,n_2,\dots,n_v;b,c) \\
+ \sum_{g_1+g_2=g, \; I \sqcup J = \{2,\dots,v\}} \widetilde{M}_{g_1,|I|+1}(\zeta,n_I;b,c)\widetilde{M}_{g_2,|J|+1}(\xi,n_J;b,c) \bigg]
\end{multline*}
\end{proof}


\section{Differential Recursion for Generalized $bc$-Motzkin Numbers}
\label{sect:diffrecformula}

We now wish to use the formula in Theorem~\ref{thm:genbcmrec} to obtain a topological recursion. In analogy with the approach in \cite{dumitrescumulaselec}, we first define the discrete Laplace transform of the generalized $bc$-Motzkin numbers, from which we will then obtain a differential recursion formula.

\begin{Def}
\label{def:bcmdlt}
We define the discrete Laplace Transform of the generalized $bc$-Motzkin numbers by 
\begin{equation}
F_{g,v}^{\widetilde{M}(b,c)}(x_1,\dots,x_v) = 
\begin{cases}
\displaystyle \sum_{n=1}^{\infty} \frac{\widetilde{M}_{0,1}(n;b,c)}{n} x_1^{-n} - \widetilde{M}_{0,1}(0;b,c) \log x_1 \text{ if $(g,v) = (0,1)$}, \\
\displaystyle \sum_{n_1=1}^{\infty} \cdots \sum_{n_v=1}^{\infty} \frac{\widetilde{M}_{g,v}(n_1,n_2,\dots,n_v;b,c)}{n_1 n_2 \cdots n_v} x_1^{-n_1} \cdots x_v^{-n_v}. \\
\end{cases}
\end{equation}
\end{Def}

We would like to obtain a recursion on 
$$\frac{\partial}{\partial t_1} F_{g,v}^{\widetilde{M}(b,c)}(t_1,t_2,\dots,t_v),$$ 
for a particular choice of change of variable $t_i$, similar to the Catalan differential recursion given in Proposition~\ref{prop:dmss13diff}.

We will first study the case when $(g,v) = (0,1)$. We differentiate the definition of $F_{0,1}^{\widetilde{M}(b,c)}(x)$ with respect to $x = x_1$ and apply the recursion formula for $(0,1)$-$bc$-Motzkin numbers from Proposition~\ref{prop:bcM01rec} to obtain
\begin{align*}
\frac{\partial}{\partial x} F_{0,1}^{\widetilde{M}(b,c)}(x) & = - \sum_{n=1}^{\infty} \widetilde{M}_{0,1}(n;b,c) x_1^{-n-1} - \widetilde{M}_{0,1}(0;b,c) x^{-1} \\
& = - x^{-1} + b x^{-1} \bigg( \frac{\partial}{\partial x} F_{0,1}^{\widetilde{M}(b,c)}(x) \bigg) - c^2 x^{-1} \bigg( \frac{\partial}{\partial x} F_{0,1}^{\widetilde{M}(b,c)}(x) \bigg)^2. \\
\end{align*}

Solving for $\partial/\partial x \, F_{0,1}^{\widetilde{M}(b,c)}(x)$ and applying the quadratic formula then implies
$$\frac{\partial}{\partial x} F_{0,1}^{\widetilde{M}(b,c)}(x) = \frac{1}{2c} \bigg[ - \bigg( \frac{x-b}{c} \bigg) - \sqrt{ \bigg( \frac{x-b}{c} \bigg)^2 - 4} \; \bigg].$$

Thus, we want to make a particular choice of change of variable so that the quantity under the square root is a perfect square, and we can simplify the expression for $\partial/\partial x \, F_{0,1}^{\widetilde{M}(b,c)}(x)$ further. We also want to choose the change of variable in such a way that it reduces to the change of variables for the Catalan case, which was given in equation \eqref{eqn:catvarchange}, when $b = 0$ and $c = 1$.

With this in mind, we let
$$\frac{x - b}{c} = 2 + \frac{4}{t^2-1} = 2\frac{t^2+1}{t^2-1}.$$

Then, after applying this change of variable, 
\begin{equation}
\label{eqn:bcmdr01}
\frac{\partial}{\partial t} F_{0,1}^{\widetilde{M}(b,c)}(t) = \frac{8t}{(t+1)(t-1)^3}.
\end{equation}

Observe this is precisely the same formula in terms of $t$ as was obtained for $\partial/\partial t F_{0,1}^C(t)$ in the Catalan case, and, when $b = 0$ and $c = 1$, the above change of variable formula does indeed reduce to the change of variable formula \eqref{eqn:catvarchange} used in the Catalan case. 

More generally, let $t_i = t_i(x_i,b,c)$, for $i \in \{1,2,\dots,v\}$, be defined by 
\begin{equation}
\label{eqn:bcmxtsub}
\frac{x_i - b}{c} = 2 + \frac{4}{t_i^2-1}.
\end{equation}

Then, we are led to prove the following new result.

\begin{thm}
\label{thm:bcmdiffrec}
The discrete Laplace transform $F^{\widetilde{M}(b,c)}_{g,v}(t_1,t_2,\dots,t_v)$ satisfies the following differential recursion formula, for every $(g,v) \neq (0,1),(0,2)$:
\begin{multline}
\frac{\partial}{\partial t_1} F_{g,v}^{\widetilde{M}(b,c)}(t_1,t_2,\dots,t_v) = - \frac{1}{16} \sum_{j=2}^v \bigg[ \frac{t_j}{t_1^2 - t_j^2} \bigg( \frac{(t_1^2-1)^3}{t_1^2} \frac{\partial}{\partial t_1} F_{g,v-1}^{\widetilde{M}(b,c)}(t_1,\dots,\widehat{t_j},\dots,t_v)\\
\begin{aligned}
& - \frac{(t_j^2-1)^3}{t_j^2} \frac{\partial}{\partial t_j} F_{g,v-1}^{\widetilde{M}(b,c)}(t_2,\dots,t_v) \bigg) \bigg] \\
& - \frac{1}{16} \sum_{j=2}^v \frac{(t_1^2-1)^2}{t_1^2} \bigg[ \frac{\partial}{\partial t_1} F_{g,v-1}^{\widetilde{M}(b,c)}(t_1,\dots,\widehat{t_j},\dots,t_v) \bigg] \\
& - \frac{1}{32} \frac{(t_1^2-1)^3}{t_1^2} \frac{\partial}{\partial u_1} \frac{\partial}{\partial u_2} F_{g-1,v+1}^{\widetilde{M}(b,c)}(u_1,u_2,t_2,\dots,t_v) \bigg|_{u_1=u_2=t_1} \\
& - \frac{1}{32} \frac{(t_1^2-1)^3}{t_1^2} \sum_{g_1+g_2=g, \; I \sqcup J = \{2,\dots,v\}, \; \text{stable}} \frac{\partial}{\partial t_1} F_{g_1,|I|+1}^{\widetilde{M}(b,c)}(t_1,t_I) \cdot \frac{\partial}{\partial t_1} F_{g_2,|J|+1}^{\widetilde{M}(b,c)}(t_1,t_J) \\
\end{aligned}
\end{multline}
where the ``stable'' summation means $2g_1 + |I| - 1 > 0$ and $2g_2 + |J| - 1 > 0$.

And,
\begin{equation}
\label{eqn:bcmdr02}
\frac{\partial}{\partial t_1} F_{0,2}^{\widetilde{M}(b,c)}(t_1,t_2) = \frac{(t_2+1)}{(t_1-1)(t_1+t_2)}.
\end{equation}

\end{thm}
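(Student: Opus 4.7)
The plan is to mimic step by step the proof of Proposition~\ref{prop:dmss13diff} (the Catalan differential recursion) and to show that every factor of $b$ and $c$ introduced by passing from the Catalan numbers to the generalized $bc$-Motzkin numbers is exactly absorbed by the change of variable $\frac{x-b}{c} = 2 + \frac{4}{t^2-1}$. Specifically, I would multiply the recursion of Theorem~\ref{thm:genbcmrec} by $-\frac{1}{n_2\cdots n_v}\,x_1^{-n_1-1}x_2^{-n_2}\cdots x_v^{-n_v}$ and sum over all $n_1,\dots,n_v\geq 1$, then apply the substitution.

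On the left side, the first summand reproduces $\partial F_{g,v}^{\widetilde{M}(b,c)}/\partial x_1$ directly from Definition~\ref{def:bcmdlt}. The shifted term $b\,\widetilde{M}_{g,v}(n_1-1,\dots)$, after reindexing by $m=n_1-1$, contributes $-\frac{b}{x_1}\partial F_{g,v}^{\widetilde{M}(b,c)}/\partial x_1$, provided the boundary term $\widetilde{M}_{g,v}(0,n_2,\dots;b,c)$ vanishes. For $(g,v)\neq(0,1),(0,2)$ this is forced by $C_{g,v}(0,\mu_2,\dots)=0$, which reflects the fact that a degree-zero vertex cannot occur in a cell decomposition of a stable surface. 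Hence the whole left side becomes $\frac{x_1-b}{x_1}\,\frac{\partial F_{g,v}^{\widetilde{M}(b,c)}}{\partial x_1}$.

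On the right side, the first sum $\sum_{j=2}^{v} n_j\,\widetilde{M}_{g,v-1}(n_1+n_j-2,\dots)$ is handled by the same partial-fraction trick as in the Catalan case: reindex by $m=n_1+n_j-2$ and split $\frac{1}{(x_1-x_j)(x_1+x_j)}$ appropriately, producing the $\frac{t_j}{t_1^2-t_j^2}$ mixed term and the $\frac{1}{t_1^2}$ diagonal term after the substitution. The convolutions $\sum_{\zeta+\xi=n_1-2}$ become ordinary products of $\partial_{x_1} F$'s and the mixed $\partial_{u_1}\partial_{u_2}F_{g-1,v+1}|_{u_1=u_2=x_1}$, with the overall $c^2$ prefactor surviving. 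Now apply the change of variable: $\partial_x = -\frac{(t^2-1)^2}{8ct}\,\partial_t$ and $\frac{x_1-b}{x_1}=\frac{c(x_1-b)/c}{x_1}$, while $(x_1-b)/c = 2(t_1^2+1)/(t_1^2-1)$. Multiplying through by $x_1/(x_1-b)$ and collecting, the factor $c^2$ on the right combines with the square of the Jacobian and the $(t_1^2+1)/(t_1^2-1)$ from clearing $(x_1-b)/x_1$ to produce exactly the prefactors $-\tfrac{1}{16}\frac{(t_1^2-1)^3}{t_1^2}$, $-\tfrac{1}{16}\frac{(t_1^2-1)^2}{t_1^2}$, and $-\tfrac{1}{32}\frac{(t_1^2-1)^3}{t_1^2}$ appearing in the statement. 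The dependence on $b$ drops out entirely because it enters only through the combination $(x_1-b)/c$.

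The main obstacle is bookkeeping: tracking boundary contributions from the index shift, the partial-fraction expansions, and the Jacobian powers simultaneously so that the $b$ and $c$ dependence cleanly cancels. The $(0,2)$ initial condition must be treated separately since it is excluded by the stability hypothesis; for this I would apply Theorem~\ref{thm:genbcmrec} directly with $(g,v)=(0,2)$, Laplace transform, use equation \eqref{eqn:bcmdr01} for $\partial F_{0,1}^{\widetilde{M}(b,c)}/\partial t$, solve for $\partial F_{0,2}^{\widetilde{M}(b,c)}/\partial t_1$, and compare with \eqref{eqn:catF11}. The detailed calculation, mirroring the Catalan proof in \cite{dumitrescumulaselec}, is relegated to Appendix~\ref{subsect:appendixdiffrec}.
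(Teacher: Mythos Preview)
Your approach is essentially the same as the paper's, but there is one genuine gap. After summing the recursion against $-\frac{x_1^{-n_1-1}\cdots x_v^{-n_v}}{n_2\cdots n_v}$, the quadratic convolution term produces the \emph{full} sum $\sum_{g_1+g_2=g,\,I\sqcup J}\partial_{x_1}F_{g_1,|I|+1}\cdot\partial_{x_1}F_{g_2,|J|+1}$, not just the stable part. You must explicitly extract the unstable $(0,1)$ and $(0,2)$ contributions: the $(g_1,|I|+1)=(0,1)$ terms give $2\,\partial_{x_1}F_{0,1}\cdot\partial_{x_1}F_{g,v}$, which must be moved to the left side, and the $(0,2)$ terms give $2\sum_{j}\partial_{x_1}F_{0,2}(x_1,x_j)\cdot\partial_{x_1}F_{g,v-1}$, which combine with your term II. Consequently the coefficient of $\partial_{x_1}F_{g,v}$ is not $\tfrac{x_1-b}{x_1}$ but rather $\tfrac{x_1-b}{c^2}+2\,\partial_{x_1}F_{0,1}$ (after clearing the common $x_1^{-1}$). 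Under the substitution this becomes $\tfrac{2}{c}\bigl(\tfrac{t_1^2+1}{t_1^2-1}-\tfrac{t_1+1}{t_1-1}\bigr)=\tfrac{-4t_1}{c(t_1^2-1)}$, and it is \emph{this} combination, not $x_1/(x_1-b)$, that one inverts to obtain the prefactors $-\tfrac{1}{16}$ and $-\tfrac{1}{32}$. Your description ``multiplying through by $x_1/(x_1-b)$'' would leave $F_{g,v}$ on both sides and give the wrong constants.

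A minor correction: in handling term II there is no factor $(x_1+x_j)$ in the $x$-variables; the relevant identity is $\sum_{n_1+n_j=k+2}x_1^{-n_1-1}x_j^{-n_j}=\tfrac{x_1^{-k-1}-x_j^{-k-1}}{x_1(x_j-x_1)}$, and the $t_1^2-t_j^2$ denominator arises only after the change of variable, via $x_j-x_1=\tfrac{4c(t_1^2-t_j^2)}{(t_1^2-1)(t_j^2-1)}$.
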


\begin{rem}
With this choice of change of variable, the formula in Theorem~\ref{thm:bcmdiffrec} has no dependence on $b$ and $c$. The dependence on $b$ and $c$ only appears in the definition of $t_i(x_i,b,c)$. Further, this result has precisely the same form as the Catalan differential recursion formula in Proposition~\ref{prop:dmss13diff}, and the choice of $t_i$ in that case corresponds to $t_i(x_i,0,1)$, which is as expected, since the $bc$-Motzkin numbers reduce to the Catalan numbers in the case when $b = 0$ and $c = 1$.
\end{rem}

Below is the proof of this result. Detailed computations are provided in Appendix~\ref{subsect:appendixdiffrec}.

\begin{proof} 

We first differentiate the formula for $F_{g,v}^{\widetilde{M}(b,c)}(x_1,x_2,\dots,x_v)$ with respect to $x_1$, then plug in the $bc$-Motzkin ``recursion'' formula from Theorem~\ref{thm:genbcmrec}. 

Since that formula has four terms, for notational convenience we may write
\begin{multline}
\label{eqn:recursionfourterms}
\frac{\partial}{\partial x_1} F_{g,v}^{\widetilde{M}(b,c)}(x_1,x_2,\dots,x_v) = -\sum_{n_1=1}^{\infty} \cdots \sum_{n_v=1}^{\infty} \frac{\widetilde{M}_{g,v}(n_1,n_2,\dots,n_v;b,c)}{n_2 \cdots n_v} x_1^{-n_1-1} x_2^{-n_2} \cdots x_v^{-n_v} \\
= \text{I}_{g,v}(x_1,x_2,\dots,x_v) + \text{II}_{g,v}(x_1,x_2,\dots,x_v) \\
+ \text{III}_{g,v}(x_1,x_2,\dots,x_v) + \text{IV}_{g,v}(x_1,x_2,\dots,x_v).
\end{multline}

Then, for the first term, we may compute
\begin{align*}
\text{I}_{g,v}(x_1,x_2,\dots,x_v) & = -\sum_{n_1=1}^{\infty} \cdots \sum_{n_v=1}^{\infty} \bigg[ b \widetilde{M}_{g,v}(n_1-1,n_2,\dots,n_v;b,c) \bigg] \frac{x_1^{-n_1-1} x_2^{-n_2} \cdots x_v^{-n_v}}{n_2 \cdots n_v} \\
& = bx_1^{-1} \frac{\partial}{\partial x_1} F_{g,v}^{\widetilde{M}(b,c)}(x_1,x_2,\dots,x_v). \\
\end{align*}

For the second term, 
\begin{align*}
& \quad \text{II}_{g,v}(x_1,x_2,\dots,x_v) \\
& = -\sum_{n_1=1}^{\infty} \cdots \sum_{n_v=1}^{\infty} \bigg[ c^2 \sum_{j=2}^v n_j \, \widetilde{M}_{g,v-1} (n_1+n_j-2, n_2, \dots, \widehat{n_j}, \dots, n_v; b; c) \bigg] \\
& \quad \cdot \frac{x_1^{-n_1-1} x_2^{-n_2} \cdots x_v^{-n_v}}{n_2 \cdots n_v} \\
& = -c^2 \sum_{j=2}^v \frac{1}{x_1(x_j-x_1)} \bigg[ -\frac{\partial}{\partial x_1} F_{g,v-1}^{\widetilde{M}(b,c)}(x_1,\dots,\widehat{x_j},\dots,x_v) + \frac{\partial}{\partial x_j} F_{g,v-1}^{\widetilde{M}(b,c)}(x_2,\dots,x_v) \bigg] \\
\end{align*}
where we used the fact that
$$\sum_{n_1+n_j=k+2, \, n_1,n_j>0} x_1^{-n_1-1} x_j^{-n_j} = \frac{x_1^{-k-1}-x_j^{-k-1}}{x_1(x_j-x_1)}.$$

For the third term, 
\begin{align*}
\text{III}_{g,v}(x_1,x_2,\dots,x_v) & = - \sum_{n_1=1}^{\infty} \cdots \sum_{n_v=1}^{\infty} \bigg[ c^2 \sum_{\zeta+\xi=n_1-2} \widetilde{M}_{g-1,v+1}(\zeta,\xi,n_2,\dots,n_v;b,c) \bigg] \\
& \quad \cdot \frac{x_1^{-n_1-1} x_2^{-n_2} \cdots x_v^{-n_v}}{n_2 \cdots n_v} \\
& = - c^2 x_1^{-1} \frac{\partial}{\partial u_1} \frac{\partial}{\partial u_2} F_{g-1,v+1}^{\widetilde{M}(b,c)}(u_1,u_2,x_2,\dots,x_v) \bigg|_{u_1=u_2=x_1}. \\
\end{align*}

For the fourth term, 
\begin{align*}
& \quad \text{IV}_{g,v}(x_1,x_2,\dots,x_v) \\
& =  -\sum_{n_1=1}^{\infty} \cdots \sum_{n_v=1}^{\infty} \bigg[ c^2 \sum_{\zeta+\xi=n_1-2} \sum_{g_1+g_2=g} \sum_{I \sqcup J = \{2,\dots,v\}} \widetilde{M}_{g_1,|I|+1}(\zeta,n_I;b,c)\widetilde{M}_{g_2,|J|+1}(\xi,n_J;b,c) \bigg] \\
& \qquad \cdot \frac{x_1^{-n_1-1} x_2^{-n_2} \cdots x_v^{-n_v}}{n_2 \cdots n_v} \\
& = -c^2 x_1^{-1} \sum_{g_1+g_2=g, \; I \sqcup J = \{2,\dots,v\}} \frac{\partial}{\partial x_1} F_{g_1,|I|+1}^{\widetilde{M}(b,c)}(x_1,x_I) \cdot \frac{\partial}{\partial x_1} F_{g_2,|J|+1}^{\widetilde{M}(b,c)}(x_1,x_J). \\
\end{align*}

Upon substituting these four terms back into the original equation \eqref{eqn:recursionfourterms}, we obtain
\begin{align*}
& \quad \frac{\partial}{\partial x_1} F_{g,v}^{\widetilde{M}(b,c)}(x_1,x_2,\dots,x_v) \\
& = bx_1^{-1} \frac{\partial}{\partial x_1} F_{g,v}^{\widetilde{M}(b,c)}(x_1,x_2,\dots,x_v) \\
& \quad - c^2 \sum_{j=2}^v \frac{1}{x_1(x_j-x_1)} \bigg[ -\frac{\partial}{\partial x_1} F_{g,v-1}^{\widetilde{M}(b,c)}(x_1,\dots,\widehat{x_j},\dots,x_v) + \frac{\partial}{\partial x_j} F_{g,v-1}^{\widetilde{M}(b,c)}(x_2,\dots,x_v) \bigg] \\
& \quad - c^2 x_1^{-1} \frac{\partial}{\partial u_1} \frac{\partial}{\partial u_2} F_{g-1,v+1}^{\widetilde{M}(b,c)}(u_1,u_2,x_2,\dots,x_v) \bigg|_{u_1=u_2=x_1} \\
& \quad - c^2 x_1^{-1} \sum_{g_1+g_2=g, \; I \sqcup J = \{2,\dots,v\}} \frac{\partial}{\partial x_1} F_{g_1,|I|+1}^{\widetilde{M}(b,c)}(x_1,x_I) \cdot \frac{\partial}{\partial x_1} F_{g_2,|J|+1}^{\widetilde{M}(b,c)}(x_1,x_J). \\
\end{align*}

We will first look at the particular case when $(g,v) = (0,2)$. Then, this becomes
\begin{align*}
\frac{\partial}{\partial x_1} F_{0,2}^{\widetilde{M}(b,c)}(x_1,x_2) & = bx_1^{-1} \frac{\partial}{\partial x_1} F_{0,2}^{\widetilde{M}(b,c)}(x_1,x_2) \\
& \quad - c^2 \frac{1}{x_1(x_2-x_1)} \bigg[ -\frac{\partial}{\partial x_1} F_{0,1}^{\widetilde{M}(b,c)}(x_1) + \frac{\partial}{\partial x_2} F_{0,1}^{\widetilde{M}(b,c)}(x_2) \bigg] \\
& \quad - 2 c^2 x_1^{-1} \frac{\partial}{\partial x_1} F_{0,1}^{\widetilde{M}(b,c)}(x_1) \cdot \frac{\partial}{\partial x_1} F_{0,2}^{\widetilde{M}(b,c)}(x_1,x_2). \\
\end{align*}

After changing variables to $t_i$ using equation \eqref{eqn:bcmxtsub} and solving for $\partial/\partial t_1 F_{0,2}^{\widetilde{M}(b,c)}(t_1,t_2)$, we then obtain
\begin{equation}
\frac{\partial}{\partial t_1} F_{0,2}^{\widetilde{M}(b,c)}(t_1,t_2) = \frac{(t_2+1)}{(t_1-1)(t_1+t_2)},
\end{equation}
as was claimed.

Now, returning to the general case, we may separate out the $(0,1)$ and $(0,2)$ terms from the sum in $\text{IV}_{g,v}(x_1,x_2,\dots,x_v)$ to obtain
\begin{align*}
& \quad \text{IV}_{g,v}(x_1,x_2,\dots,x_v) \\
& = -c^2 x_1^{-1} \bigg[ \sum_{g_1+g_2=g, \; I \sqcup J = \{2,\dots,v\}, \; \text{stable}} \frac{\partial}{\partial x_1} F_{g_1,|I|+1}^{\widetilde{M}(b,c)}(x_1,x_I) \cdot \frac{\partial}{\partial x_1} F_{g_2,|J|+1}^{\widetilde{M}(b,c)}(x_1,x_J) \\
& \quad + 2 \frac{\partial}{\partial x_1} F_{0,1}^{\widetilde{M}(b,c)}(x_1) \cdot \frac{\partial}{\partial x_1} F_{g,v}^{\widetilde{M}(b,c)}(x_1,x_2,\dots,x_v) \\
& \quad + 2 \sum_{j=2}^v \frac{\partial}{\partial x_1} F_{0,2}^{\widetilde{M}(b,c)}(x_1,x_j) \cdot \frac{\partial}{\partial x_1} F_{g,v-1}^{\widetilde{M}(b,c)}(x_1,x_2,\dots,\widehat{x_j},\dots,x_v) \bigg]. \\
\end{align*}

Substituting this and the other three terms into equation \eqref{eqn:recursionfourterms} again, and rearranging terms so that $\partial/\partial x_1 F_{g,v}^{\widetilde{M}(b,c)}(x_1,x_2,\dots,x_v)$ appears only on the left side, gives
\begin{align*}
& \quad \frac{\partial}{\partial x_1} F_{g,v}^{\widetilde{M}(b,c)}(x_1,x_2,\dots,x_v) \\
& = - \bigg( \frac{x_1-b}{c^2} + 2 \frac{\partial}{\partial x_1} F_{0,1}^{\widetilde{M}(b,c)}(x_1) \bigg) ^{-1} \bigg\{ \sum_{j=2}^v \frac{1}{(x_j-x_1)} \bigg[ \frac{\partial}{\partial x_j} F_{g,v-1}^{\widetilde{M}(b,c)}(x_2,\dots,x_v) \bigg] \\
& + \sum_{j=2}^v \bigg( 2 \frac{\partial}{\partial x_1} F_{0,2}^{\widetilde{M}(b,c)}(x_1,x_j) - \frac{1}{(x_j-x_1)} \bigg) \bigg[ \frac{\partial}{\partial x_1} F_{g,v-1}^{\widetilde{M}(b,c)}(x_1,\dots,\widehat{x_j},\dots,x_v) \bigg] \\
& + \frac{\partial}{\partial u_1} \frac{\partial}{\partial u_2} F_{g-1,v+1}^{\widetilde{M}(b,c)}(u_1,u_2,x_2,\dots,x_v) \bigg|_{u_1=u_2=x_1} \\
& + \sum_{g_1+g_2=g, \; I \sqcup J = \{2,\dots,v\}, \; \text{stable}} \frac{\partial}{\partial x_1} F_{g_1,|I|+1}^{\widetilde{M}(b,c)}(x_1,x_I) \cdot \frac{\partial}{\partial x_1} F_{g_2,|J|+1}^{\widetilde{M}(b,c)}(x_1,x_J) \bigg\}. \\
\end{align*}

Then, changing variables to $t_i$ using equation \eqref{eqn:bcmxtsub} and simplifying the result completes the proof of the above theorem. 
\end{proof}

From this differential recursion formula, we can obtain all $F^{\widetilde{M}(b,c)}_{g,v}(t_1,\dots,t_v)$, where $(g,v) \neq (0,1),(0,2)$, by integrating the right side of the equation in Theorem~\ref{thm:bcmdiffrec} from $-1$ to $t_1$ with respect to the variable $t_1$. We will now look at two examples.

\begin{ex}
For the case when $(g,v) = (1,1)$, we see that
\begin{align*}
\frac{\partial}{\partial t} F_{1,1}^{\widetilde{M}(b,c)}(t) & = - \frac{1}{32} \frac{(t^2-1)^3}{t^2} \frac{\partial}{\partial u_1} \frac{\partial}{\partial u_2} F_{0,2}^{\widetilde{M}(b,c)}(u_1,u_2) \bigg|_{u_1=u_2=t} \\
& = - \frac{1}{128} \frac{(t^2-1)^3}{t^4}, \\
\end{align*}
so
\begin{equation}
\begin{aligned}
F_{1,1}^{\widetilde{M}(b,c)}(t) & = - \frac{1}{128} \int_{-1}^t \frac{(\tau^2-1)^3}{\tau^4} \; d\tau \\
& = - \frac{1}{384} \frac{(1+t)^4}{t^2} \bigg( t - 4 + \frac{1}{t} \bigg). \\
\end{aligned}
\end{equation}
\end{ex}

\begin{ex}
When $(g,v) = (0,3)$, we see that
\begin{equation}
F_{0,3}^{\widetilde{M}(b,c)}(t_1,t_2,t_3) = -\frac{1}{16} (t_1+1) (t_2+1) (t_3+1) \bigg( 1 + \frac{1}{t_1t_2t_3} \bigg).
\end{equation}
\end{ex}

\begin{rem}
Since $F_{1,1}^{\widetilde{M}(b,c)}(t)$ and $F_{0,3}^{\widetilde{M}(b,c)}(t_1,t_2,t_3)$ are both Laurent polynomials, the differential recursion formula in Theorem~\ref{thm:bcmdiffrec} implies that all higher order discrete Laplace transforms $F^{\widetilde{M}(b,c)}_{g,v}(t_1,t_2,\dots,t_v)$ will also be Laurent polynomials. This can be seen by observing that $F^{\widetilde{M}(b,c)}_{g,v}$, for $(g,v) \neq (0,1), (0,2), (0,3),$ and $(1,1)$ depends only on $F_{1,1}^{\widetilde{M}(b,c)}, F_{0,3}^{\widetilde{M}(b,c)},$ and higher order $F^{\widetilde{M}(b,c)}_{g,v}$, which, recursively, must be Laurent polynomials, because $F_{1,1}^{\widetilde{M}(b,c)}$ and $F_{0,3}^{\widetilde{M}(b,c)}$ are Laurent polynomials.
\end{rem}

It follows from Theorem~\ref{prop:dmss13diff} and Theorem~\ref{thm:bcmdiffrec} that, since the differential recursion formulas for the $bc$-Motzkin numbers and the Catalan numbers are identical (up to the change of variable) and have the same initial conditions, their discrete Laplace transforms must be the same. This gives the following Corollary. 

\begin{cor}
We have
\begin{equation}
F^{\widetilde{M}(b,c)}_{g,v}(t_1,t_2,\dots,t_v) = F^C_{g,v}(t_1,t_2,\dots,t_v),
\end{equation}
where on the left side the $t_i$ are defined by equation \eqref{eqn:bcmxtsub} and on the right side the $t_i$ are defined by equation \eqref{eqn:catvarchange}.
\end{cor}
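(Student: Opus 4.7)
The plan is to proceed by induction on the Euler characteristic $\chi(g,v) = 2g-2+v$, exploiting the fact that the differential recursion in Theorem~\ref{thm:bcmdiffrec} and the Catalan recursion in Proposition~\ref{prop:dmss13diff} are structurally identical once each is expressed in its own $t_i$-coordinates. The base cases $(g,v) = (0,1)$ and $(g,v) = (0,2)$ follow from direct comparison of equations~\eqref{eqn:bcmdr01} with~\eqref{eqn:catF01} and of equations~\eqref{eqn:bcmdr02} with~\eqref{eqn:catF11}; integrating these derivative identities and fixing the constants of integration by symmetry in the $t_i$ and by the common normalization of the Laplace transform yields $F^{\widetilde{M}(b,c)}_{0,1} = F^C_{0,1}$ and $F^{\widetilde{M}(b,c)}_{0,2} = F^C_{0,2}$ as functions of the respective $t$-variables.

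For the inductive step at $\chi(g,v) \geq 1$, every term on the right-hand side of Theorem~\ref{thm:bcmdiffrec} -- namely $F^{\widetilde{M}(b,c)}_{g,v-1}$, $F^{\widetilde{M}(b,c)}_{g-1,v+1}$, and the stable splittings $F^{\widetilde{M}(b,c)}_{g_1,|I|+1} \cdot F^{\widetilde{M}(b,c)}_{g_2,|J|+1}$ -- has Euler characteristic strictly less than $\chi(g,v)$. By the inductive hypothesis, each of these equals its Catalan counterpart, and the structurally identical form of Proposition~\ref{prop:dmss13diff} then forces
$$\frac{\partial}{\partial t_1} F^{\widetilde{M}(b,c)}_{g,v}(t_1,\dots,t_v) = \frac{\partial}{\partial t_1} F^C_{g,v}(t_1,\dots,t_v).$$
Since the generalized $bc$-Motzkin and Catalan numbers are symmetric in all their indices, the same derivative identity holds with $\partial/\partial t_j$ for every $j$.

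It remains to integrate this identity in $t_1$ from $-1$ to $t_1$, exactly as in the computation of $F_{1,1}$ displayed in Section~\ref{sect:diffrecformula}, and match the resulting integration constants. This is the main obstacle: one must verify that both $F^{\widetilde{M}(b,c)}_{g,v}$ and $F^C_{g,v}$ vanish at $t_1 = -1$ (and, more generally, at any $t_i = -1$). The natural approach is an inductive verification based on the factor $(t_1^2-1)^3/t_1^2$ appearing uniformly in every term of the recursion kernel: it has a triple zero at $t_1 = -1$, so integration against any lower-order Laurent polynomial that already vanishes at $t_1 = -1$ produces an antiderivative divisible by $(1+t_1)$. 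This matches the observed structure $F_{1,1}(t) \propto (1+t)^4$ and $F_{0,3}(t_1,t_2,t_3) \propto (t_1+1)(t_2+1)(t_3+1)$, and the identical argument applies verbatim on the Catalan side. With boundary values matched, the integrated equality $F^{\widetilde{M}(b,c)}_{g,v} = F^C_{g,v}$ follows for every stable $(g,v)$, completing the induction.
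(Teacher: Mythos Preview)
Your approach is essentially the paper's: the Corollary is stated as an immediate consequence of the fact that Theorem~\ref{thm:bcmdiffrec} and Proposition~\ref{prop:dmss13diff} are the \emph{same} differential recursion in the $t$-variables with the \emph{same} initial data, so the solutions must agree. Your write-up simply makes the underlying induction on $2g-2+v$ explicit.

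One comment on the integration-constant step, which you flag as the ``main obstacle.'' Your inductive argument via the zero of $(t_1^2-1)^3/t_1^2$ in the recursion kernel is more delicate than necessary (and not literally uniform: one of the $F_{g,v-1}$ terms carries only $(t_1^2-1)^2/t_1^2$). The cleaner and more robust justification is the one the paper uses implicitly when it integrates from $-1$ to $t_1$: in \emph{both} coordinate systems, $t_i \to -1$ corresponds to $x_i \to \infty$, and for $(g,v)\neq(0,1)$ the discrete Laplace transform is a sum of terms each carrying a factor $x_i^{-n_i}$ with $n_i\ge 1$, so $F_{g,v}\to 0$ there directly from Definition~\ref{def:bcmdlt} (and likewise for $F^C_{g,v}$). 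This pins down the constant of integration without any appeal to the recursion kernel, and it avoids the circularity of invoking the subsequent corollary about special values at $t_i=-1$.
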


Therefore, we see that the following results which are known to hold for generalized Catalan numbers are also true for the case of generalized $bc$-Motzkin numbers. (See also \cite{CMS}, \cite{dumitrescumulaselec}, \cite{DMSS}, and \cite{mulasepenkava}.)

For all $(g,v)$ with $2g-2+v > 0$, the discrete Laplace transform $F_{g,v}^{\widetilde{M}(b,c)}(t_1,t_2,\dots,t_v)$ satisfies the following corollaries of Theorem~\ref{thm:bcmdiffrec}, where the $t_i$ are as defined above.

\begin{cor}
$F_{g,v}^{\widetilde{M}(b,c)}(t_1,\dots,t_v)$ is a Laurent polynomial in the $t_i$-variables, of degree $3(2g-2+v)$. And, 
$$F_{g,v}^{\widetilde{M}(b,c)}(1/t_1,1/t_2,\dots,1/t_v) = F_{g,v}^{\widetilde{M}(b,c)}(t_1,t_2,\dots,t_v).$$
\end{cor}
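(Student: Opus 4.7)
The plan is to deduce both assertions from the immediately preceding corollary, which establishes the identity $F^{\widetilde{M}(b,c)}_{g,v}(t_1,\dots,t_v) = F^C_{g,v}(t_1,\dots,t_v)$ under the change of variable \eqref{eqn:bcmxtsub}. With this identification in hand, both statements reduce to the corresponding known facts for the generalized Catalan discrete Laplace transform, which are established in \cite{CMS, dumitrescumulaselec, DMSS, mulasepenkava}. Thus the first route is essentially a one-line argument: apply the preceding corollary to transport the Laurent polynomiality, degree bound, and inversion symmetry from the Catalan side.

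For a self-contained proof that bypasses the Catalan reduction, one can instead argue directly by induction on the Euler characteristic $2g-2+v$, using the differential recursion of Theorem~\ref{thm:bcmdiffrec}. The base cases $(g,v)=(1,1)$ and $(0,3)$ are handled by the explicit formulas $F_{1,1}^{\widetilde{M}(b,c)}(t) = -\frac{1}{384}(1+t)^4 t^{-2}(t - 4 + t^{-1})$ and $F_{0,3}^{\widetilde{M}(b,c)}(t_1,t_2,t_3) = -\frac{1}{16}(t_1+1)(t_2+1)(t_3+1)(1 + (t_1t_2t_3)^{-1})$ already computed: each is a Laurent polynomial of the prescribed total degree $3(2g-2+v)=3$, and a short substitution verifies invariance under $t_i \mapsto 1/t_i$.

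For the inductive step, I would show that each term on the right-hand side of Theorem~\ref{thm:bcmdiffrec} is a Laurent polynomial in $t_1$ of the appropriate degree, and then observe that integrating from $-1$ to $t_1$ preserves the Laurent polynomial structure and yields the correct total degree. The factor $(t_1^2-1)^3/t_1^2$ together with $dt_1$ is invariant under $t_1\mapsto 1/t_1$, and the inductive hypothesis gives both the Laurent polynomiality and inversion symmetry of the lower-order $F^{\widetilde{M}(b,c)}_{g',v'}$ appearing on the right; these two facts suffice to propagate the symmetry $F^{\widetilde{M}(b,c)}_{g,v}(1/t_1,\dots,1/t_v) = F^{\widetilde{M}(b,c)}_{g,v}(t_1,\dots,t_v)$ to the current $(g,v)$.

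The main obstacle is verifying that the apparent first-order poles at $t_1 = \pm t_j$ in the first sum of Theorem~\ref{thm:bcmdiffrec}, arising from the factor $t_j/(t_1^2-t_j^2)$, are actually removable, so that the right-hand side is genuinely a Laurent polynomial in the $t_i$ alone. The pole at $t_1 = t_j$ cancels because the bracketed expression vanishes there: by the symmetry of $F_{g,v-1}$ in its arguments, setting $t_1 = t_j$ equates the two summands inside the bracket. The pole at $t_1 = -t_j$ is the delicate part and requires combining the inductive inversion symmetry $F_{g,v-1}(1/t_1,\dots) = F_{g,v-1}(t_1,\dots)$ with the evenness of $(t^2-1)^3/t^2$; alternatively, one can argue that the residue-calculus formulation of Theorem~\ref{thm:bcmtoprecintro} forces this cancellation automatically, since otherwise the topological recursion would not produce meromorphic forms with poles only at the ramification points.
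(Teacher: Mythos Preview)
Your primary route---transporting the Laurent polynomiality, degree, and inversion symmetry from $F^C_{g,v}$ via the preceding corollary $F^{\widetilde{M}(b,c)}_{g,v}=F^C_{g,v}$ and the references \cite{CMS,dumitrescumulaselec,DMSS,mulasepenkava}---is exactly what the paper does; there is no separate proof in the paper beyond the sentence ``Therefore, we see that the following results which are known to hold for generalized Catalan numbers are also true for the case of generalized $bc$-Motzkin numbers.''

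Your second, self-contained inductive argument is a genuine alternative the paper does not attempt. It buys independence from the Catalan literature at the cost of having to verify directly the pole cancellations at $t_1=\pm t_j$ and the preservation of the inversion symmetry under the integration $\int_{-1}^{t_1}$. You correctly identify the cancellation at $t_1=-t_j$ as the delicate point; your sketch there (invoking the inductive inversion symmetry, or alternatively the residue form of the topological recursion) is plausible but would need to be fleshed out to be a complete proof---in particular, one must also check that the constant of integration is fixed correctly so that the inversion symmetry holds for $F$ itself and not just for $\partial_{t_1}F$. The paper sidesteps all of this by reducing to the Catalan case, which is the more economical choice given that those results are already in the literature.
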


\begin{cor}
The special values at $t_i = -1$ are given by 
$$F_{g,v}^{\widetilde{M}(b,c)}(t_1,t_2,\dots,t_v) |_{t_i=-1} = 0,$$
for each $i$.

The diagonal value at $t_i = 1$ gives the orbifold Euler characteristic of the moduli space $\mathcal{M}_{g,n}$,
$$F_{g,v}^{\widetilde{M}(b,c)}(1,1,\dots,1) = (-1)^n \chi(\mathcal{M}_{g,n}).$$
\end{cor}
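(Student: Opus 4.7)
The plan is to reduce both claims to the corresponding statements for the generalized Catalan discrete Laplace transform $F^C_{g,v}$, using the preceding Corollary that identifies $F^{\widetilde{M}(b,c)}_{g,v}(t_1,\dots,t_v) = F^C_{g,v}(t_1,\dots,t_v)$ once the $t_i$ are related to the $x_i$ by the respective substitutions. Since both evaluations are made directly in the $t$-variables and the two functions literally agree as Laurent polynomials, the results depend only on the (known) generalized-Catalan side and are independent of the parameters $b$ and $c$.

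For the vanishing at $t_i = -1$, I would induct on the Euler characteristic $2g-2+v$ using the differential recursion of Proposition~\ref{prop:dmss13diff}. The base cases $(g,v)=(0,1)$ and $(0,2)$ follow from integrating \eqref{eqn:catF01} and \eqref{eqn:catF11}, with the integration constants fixed by the asymptotic decay of the Laplace transform at $x = \infty$ (equivalently, at $t = -1$ under the given substitution); a direct computation then shows that the resulting Laurent polynomials vanish at $t = -1$. For the inductive step, inspection of the right-hand side of Proposition~\ref{prop:dmss13diff} reveals that every term carries an overall prefactor of the form $(t_1^2-1)^k/t_1^2$ with $k \in \{2,3\}$, so $\partial_{t_1} F^C_{g,v}(t_1,\dots,t_v)$ vanishes at $t_1 = -1$. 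Integrating in $t_1$ from the lower endpoint $-1$ then yields $F^C_{g,v}|_{t_1 = -1} = 0$, and the symmetry of $F^C_{g,v}$ in its arguments, inherited from the symmetry of $C_{g,v}(\mu_1,\dots,\mu_v)$, extends the vanishing to every variable.

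For the diagonal evaluation at $t_i = 1$, I would again invoke the preceding Corollary and reduce to the corresponding Catalan identity. That identity is the content of results in \cite{dumitrescumulaselec}, \cite{CMS}, and \cite{mulasepenkava}: the numbers $C_{g,v}(\mu_1,\dots,\mu_v)$ count cellularly-embedded ribbon graphs on the closed oriented genus-$g$ surface with $v$ labelled vertices of prescribed valencies, and under the Kontsevich-Penner combinatorial model of $\mathcal{M}_{g,v}$ the specialization at $t_i = 1$ recovers the Harer-Zagier-Penner formula for the signed orbifold Euler characteristic $(-1)^v \chi(\mathcal{M}_{g,v})$. The main obstacle is thus not the reduction step, which is immediate from the preceding Corollary, but rather the Catalan case itself, which ultimately relies on the nontrivial moduli-space interpretation of the generalized Catalan numbers; since the present paper treats that as a black box, the corollary follows with no further computation once the identification $F^{\widetilde{M}(b,c)}_{g,v} = F^C_{g,v}$ has been established.
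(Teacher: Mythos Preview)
Your overall strategy---reduce to the Catalan case via the identity $F^{\widetilde{M}(b,c)}_{g,v}=F^{C}_{g,v}$ and then invoke the known Catalan results from \cite{CMS}, \cite{dumitrescumulaselec}, \cite{DMSS}, \cite{mulasepenkava}---is exactly what the paper does. In fact the paper gives no argument beyond that reduction: it states these corollaries immediately after the identification and simply cites the references, so your proposal is already more detailed than the paper's own treatment.

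That said, the extra inductive sketch you offer for the vanishing at $t_i=-1$ has a gap. You claim that every term on the right-hand side of the differential recursion carries a prefactor $(t_1^2-1)^k/t_1^2$, but this is false for the piece
\[
\frac{t_j}{t_1^2-t_j^2}\cdot\frac{(t_j^2-1)^3}{t_j^2}\,\frac{\partial}{\partial t_j}F_{g,v-1}(t_2,\dots,t_v),
\]
which has no $(t_1^2-1)$ factor and does not vanish at $t_1=-1$. The cleaner argument, and the one implicit in the paper, is that under the substitution \eqref{eqn:bcmxtsub} the point $t_i=-1$ corresponds to $x_i=\infty$, where the defining series for $F_{g,v}$ vanishes term by term; equivalently, the paper constructs $F_{g,v}$ in the stable range by integrating $\partial_{t_1}F_{g,v}$ from the lower endpoint $t_1=-1$, so the vanishing there is built in. Symmetry then handles the remaining variables, as you note.
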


\begin{cor}
The restriction of the Laurent polynomial $F_{g,v}^{\widetilde{M}(b,c)}(t_1,\dots,t_v)$ to its highest degree terms gives a homogeneous polynomial defined by 
$$F_{g,v}^{\widetilde{M}(b,c), \text{highest}}(t_1,\dots,t_v) = \frac{(-1)^v}{2^{2g-2+v}} \sum_{d_1+\cdots+d_v=3g-3+v} \langle \tau_{d_1} \cdots \tau_{d_v} \rangle_{g,n} \prod_{i=1}^v |2d_i - 1|!! \bigg( \frac{t_i}{2} \bigg)^{2d_i+1},$$
where the $\langle \tau_{d_1} \cdots \tau_{d_v} \rangle_{g,n}$ represent the intersection numbers on the moduli space of stable curves.
\end{cor}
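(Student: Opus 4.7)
The plan is to reduce the claim to the known analogue for generalized Catalan numbers. The immediately preceding corollary identifies $F_{g,v}^{\widetilde{M}(b,c)}(t_1,\dots,t_v) = F_{g,v}^C(t_1,\dots,t_v)$ as Laurent polynomials in the same variables, so in particular their top-degree homogeneous parts coincide. Hence it suffices to prove the stated formula for $F^{C,\text{highest}}_{g,v}$, and this Catalan version is established in \cite{CMS}, \cite{dumitrescumulaselec}, \cite{DMSS}, and \cite{mulasepenkava}. I will outline how that argument runs in our setting.

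The core step is to extract the top-degree part of the differential recursion in Proposition~\ref{prop:dmss13diff}. Writing each Laurent polynomial $F^C_{g,v}$ as its leading homogeneous piece plus strictly lower-degree terms, the factor $(t_1^2-1)^3/t_1^2$ contributes a leading $t_1^4$, the factor $(t_1^2-1)^2/t_1^2$ contributes a leading $t_1^2$, and the symmetric kernel $t_j/(t_1^2-t_j^2)$ expands at the top in odd powers of $t_j/t_1$. A careful degree count shows that the leading contributions on the right-hand side come only from the leading parts of the $F^C_{g',v'}$ appearing there, so the collection $F^{C,\text{highest}}_{g,v}$ satisfies a closed recursion of its own. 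After substituting monomials $(t_i/2)^{2d_i+1}$ and using that $\partial_{t_i}$ acting on such a monomial produces the factor $(2d_i+1)/2$, this closed recursion becomes precisely the Dijkgraaf-Verlinde-Verlinde/Virasoro recursion for the Witten-Kontsevich intersection numbers $\langle \tau_{d_1}\cdots\tau_{d_v}\rangle_{g,n}$ on $\overline{\mathcal{M}}_{g,n}$, with the double factorials $|2d_i-1|!!$ arising from iterated applications of this derivative rule.

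The induction would run on $2g-2+v$, with base cases $(g,v)=(0,3)$ and $(g,v)=(1,1)$ already handled by the two examples computed above. For $(0,3)$, the leading term $-t_1 t_2 t_3/16$ of $F_{0,3}^{\widetilde{M}(b,c)}$ matches $\frac{(-1)^3}{2^1}\,\langle\tau_0^3\rangle_{0,3}\,(t_1/2)(t_2/2)(t_3/2) = -t_1t_2t_3/16$ since $\langle\tau_0^3\rangle_{0,3}=1$; for $(1,1)$, the leading term $-t^3/384$ of $F_{1,1}^{\widetilde{M}(b,c)}$ matches $\frac{-1}{2}\,\langle\tau_1\rangle_{1,1}\,|1|!!\,(t/2)^3 = -t^3/384$ since $\langle\tau_1\rangle_{1,1}=1/24$. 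The main obstacle is the bookkeeping behind the closed top-degree recursion claim: one must verify that the three structurally distinct summands in the recursion all contribute cleanly at the leading order with exactly the combinatorial coefficients prescribed by the Virasoro constraints, and in particular that no subleading cross-terms conspire to produce spurious top-degree contributions. Once this is checked, uniqueness of the solution to the Virasoro constraints with the Witten-Kontsevich initial data identifies $F^{C,\text{highest}}_{g,v}$ with the intersection-number generating series, and the corollary follows by transport along the identification $F^{\widetilde{M}(b,c)}_{g,v}=F^C_{g,v}$.
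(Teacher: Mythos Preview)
Your proposal is correct and follows exactly the paper's approach: the paper simply invokes the identification $F^{\widetilde{M}(b,c)}_{g,v}=F^C_{g,v}$ from the preceding corollary and then cites \cite{CMS}, \cite{dumitrescumulaselec}, \cite{DMSS}, \cite{mulasepenkava} for the Catalan statement, without reproducing any of the top-degree/Virasoro argument you sketch. Your additional outline of how that Catalan result is proved (degree extraction from the differential recursion, base-case checks, matching with the DVV recursion) is extra detail beyond what the paper provides, but it is consistent with the cited references.
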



\section{Topological Recursion for Generalized $bc$-Motzkin Numbers}
\label{sect:toprecformula}

Just as for the Catalan case, as discussed in Section~\ref{sect:catbackground} of this paper and in \cite{dumitrescumulaselec}, the differential recursion formula for generalized $bc$-Motzkin numbers in Theorem~\ref{thm:bcmdiffrec} leads directly to the following topological recursion for generalized $bc$-Motzkin numbers. Note that, since the $bc$-Motzkin differential recursion formula has the same form, with the same initial conditions, as the differential recursion formula for Catalan numbers which was given in Proposition~\ref{prop:dmss13diff} (up to the slightly different change of variables from $x_i$ to $t_i$), the topological recursion for generalized $bc$-Motzkin numbers and its proof are identical to those in the Catalan case, as given in Proposition~\ref{prop:dmss13top}.

\begin{rem}
Just as with the differential recursion formula, the dependence on $b$ and $c$ only appears in this change of variable from the $x_i$ to the $t_i$. This tells us that there does indeed exist a topological recursion for these generalized $bc$-Motzkin numbers, and furthermore it has precisely the same form as the result which was obtained in \cite{dumitrescumulaselec} for the generalized Catalan numbers.
\end{rem}

Thus, we have the following new theorem for the generalized $bc$-Motzkin numbers.

\begin{thm}
\label{thm:bcmtoprec}
Define symmetric $v$-linear differential forms on $(\mathbb{P}^1)^v$ for $2g-2+v > 0$ by
\begin{equation}
W_{g,v}^{\widetilde{M}(b,c)}(t_1,t_2,\dots,t_v) = d_{t_1} \cdots d_{t_v} F_{g,v}^{\widetilde{M}(b,c)}(t_1,t_2,\dots,t_v),
\end{equation}
and for $(g,v) = (0,2)$ by
\begin{equation}
W_{0,2}^{\widetilde{M}(b,c)}(t_1,t_2) = \frac{dt_1 \, dt_2}{(t_1-t_2)^2}.
\end{equation}
Then, these differential forms satisfy the following integral recursion equation:
\begin{multline}
\label{eqn:bcmtoprec}
W_{g,v}^{\widetilde{M}(b,c)}(t_1,t_2,\dots,t_v) = - \frac{1}{64} \frac{1}{2\pi i} \int_{\gamma} \bigg( \frac{1}{t+t_1} + \frac{1}{t-t_1} \bigg) \frac{(t^2-1)^3}{t^2} \,\frac{1}{dt} \, dt_1 \\
\cdot \bigg[ \sum_{j=2}^v \bigg( W_{0,2}^{\widetilde{M}(b,c)}(t,t_j) W_{g,v-1}^{\widetilde{M}(b,c)}(-t,t_2,\dots,\widehat{t_j},\dots,t_v) \\ + W_{0,2}^{\widetilde{M}(b,c)}(-t,t_j) W_{g,v-1}^{\widetilde{M}(b,c)}(t,t_2,\dots,\widehat{t_j},\dots,t_v) \bigg) \\
+ W_{g-1,v+1}^{\widetilde{M}(b,c)}(t,-t,t_2,\dots,t_v) \\
+ \sum_{g_1+g_2=g, \; I \sqcup J = \{2,\dots,v\}, \; \text{stable}} W_{g_1,|I|+1}^{\widetilde{M}(b,c)}(t,t_I) W_{g_2,|J|+1}^{\widetilde{M}(b,c)}(-t,t_J) \bigg]
\end{multline}
The ``stable'' summation means $2g_1 + |I| - 1 > 0$ and $2g_2 + |J| - 1 > 0$.

The curve $\gamma$ is as given in Figure~\ref{fig:gammacontour}.

\end{thm}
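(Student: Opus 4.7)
The plan is to derive the integral recursion \eqref{eqn:bcmtoprec} directly from the differential recursion of Theorem~\ref{thm:bcmdiffrec} by the residue theorem, following the blueprint used in \cite{dumitrescumulaselec} for Proposition~\ref{prop:dmss13top}. First I would apply the exterior derivative operator $d_{t_1} d_{t_2} \cdots d_{t_v}$ to both sides of the differential recursion in Theorem~\ref{thm:bcmdiffrec}. The left-hand side becomes $W_{g,v}^{\widetilde{M}(b,c)}(t_1,\ldots,t_v)$ by definition. Each factor of the form $(\partial/\partial t_k) F_{g',v'}^{\widetilde{M}(b,c)}$ is rewritten in terms of $W$-forms, so that, for example, $(\partial F_{g_1,|I|+1}^{\widetilde{M}(b,c)}/\partial t_1)\,(\partial F_{g_2,|J|+1}^{\widetilde{M}(b,c)}/\partial t_1)$ is converted to $W_{g_1,|I|+1}^{\widetilde{M}(b,c)}(t_1,t_I)\,W_{g_2,|J|+1}^{\widetilde{M}(b,c)}(t_1,t_J)/(dt_1)^2$, and the mixed derivative $(\partial/\partial u_1)(\partial/\partial u_2) F_{g-1,v+1}^{\widetilde{M}(b,c)}|_{u_1=u_2=t_1}$ is identified with the diagonal of $W_{g-1,v+1}^{\widetilde{M}(b,c)}(t,-t,t_2,\ldots,t_v)$ divided by $dt\, d(-t)$.

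Second, I would evaluate the contour integral on the right-hand side of \eqref{eqn:bcmtoprec} by the residue theorem. The curve $\gamma$ is chosen to enclose only the ramification point $t = 0$ of the spectral curve (where $dx = 0$ after the change of variables \eqref{eqn:bcmxtsub}), avoiding $\pm t_1$ and each $\pm t_j$. The kernel factor
$$\Big(\tfrac{1}{t+t_1} + \tfrac{1}{t-t_1}\Big)\frac{(t^2-1)^3}{t^2} = \frac{2t(t^2-1)^3}{t^2(t^2-t_1^2)}$$
has a simple pole at $t=0$, and the bracketed $W$-forms are Laurent polynomials in $t$ by the remark following Theorem~\ref{thm:bcmdiffrec}, so the integral reduces to an extraction of finitely many Laurent coefficients at $t=0$. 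Expanding $W_{0,2}^{\widetilde{M}(b,c)}(\pm t, t_j) = dt\, dt_j/(\pm t - t_j)^2$ in $t$ and combining with the kernel should reproduce, term by term, both the antisymmetric $t_j/(t_1^2 - t_j^2)$ piece and the "diagonal" $(t_1^2-1)^2/t_1^2$ piece in the first two sums of Theorem~\ref{thm:bcmdiffrec}. Likewise, the loop term $W_{g-1,v+1}^{\widetilde{M}(b,c)}(t,-t,\ldots)$ and the stable-product sum in the bracket convert into the third and fourth terms of the differential recursion.

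The main technical obstacle will be the bookkeeping of the residue calculation: tracking the factors $dt_i$ and $dt$, verifying that the overall prefactor $-\tfrac{1}{64}$ matches the prefactors $-\tfrac{1}{16}$ and $-\tfrac{1}{32}$ in Theorem~\ref{thm:bcmdiffrec} (the $t \leftrightarrow -t$ symmetrization inherent in $\gamma$ accounts for the factor-of-two discrepancy), and checking that the "stable" restriction on the right-hand side of \eqref{eqn:bcmtoprec} matches the derivation. This last point requires showing that any potential unstable $(0,1)$ contribution produces zero residue at $t=0$ when paired with the kernel, or else has already been absorbed into the left-hand side of the differential recursion via $\partial F_{0,1}^{\widetilde{M}(b,c)}/\partial t$. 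Because the differential recursion and its initial data $\partial F_{0,1}/\partial t$ and $\partial F_{0,2}/\partial t_1$ are formally identical to the Catalan ones — only the change of variables \eqref{eqn:bcmxtsub} now depends on $b$ and $c$ — the residue computation proceeds verbatim as in \cite{dumitrescumulaselec}, and no new analytic input is required beyond that argument.
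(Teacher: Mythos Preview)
Your overall strategy is the paper's: apply $d_{t_2}\cdots d_{t_v}$ to both sides of Theorem~\ref{thm:bcmdiffrec}, rewrite everything in terms of the $W$-forms, and then show that the contour integral on the right of \eqref{eqn:bcmtoprec} reproduces these terms via residues. That part is fine.

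The gap is in how you propose to evaluate the contour integral. You assert that $\gamma$ encloses only the ramification point $t=0$ and that the integral ``reduces to an extraction of finitely many Laurent coefficients at $t=0$.'' This is not how the paper proceeds, and as stated it would not work. First, the map $x(t)$ has ramification at both $t=0$ and $t=\infty$, and $\gamma$ must separate $\{0,\infty\}$ from $\{\pm t_1,\pm t_j\}$; a quick check with the $(1,1)$ case shows that $\mathrm{Res}_{t=0}$ alone does not give the correct answer. Second, and more importantly, even if you included $t=\infty$, computing residues at the ramification points would yield a sum over all Laurent coefficients of $w_{g',v'}(t,\ldots)$ weighted by rational functions of $t_1$, with no evident term-by-term correspondence to the pieces of the differential recursion.

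The paper instead evaluates the integral by the complementary residues: it deforms $\gamma$ outward and picks up $-\mathrm{Res}_{t=\pm t_1}$ (and, for the $W_{0,2}$ terms, also $-\mathrm{Res}_{t=\pm t_j}$). These residues are explicit---simple poles at $\pm t_1$ give the factor $(t_1^2-1)^3/t_1^2$ directly, and the double poles at $\pm t_j$ coming from $W_{0,2}(\pm t,t_j)$ produce exactly the $\partial/\partial t_j$ term and the $(t_1^2+t_j^2)/(t_1^2-t_j^2)^2$ term of $\mathrm{I}_{g,v}$ after differentiation. The parity of $w_{g,v}$ in each variable is used to combine the $\pm t_1$ and $\pm t_j$ contributions. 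This is what makes the matching with Theorem~\ref{thm:bcmdiffrec} transparent and is the step your sketch is missing.
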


As in the Catalan case, given in Proposition~\ref{prop:dmss13top}, these differential forms are called the Eynard-Orantin differential forms, and the recursion is called the topological recursion for the generalized $bc$-Motzkin numbers. 

\begin{rem} 
Observe that, using Equation~\ref{eqn:bcmdr02} above for $\partial/\partial t_1 F_{0,2}^{\widetilde{M}(b,c)} (t_1,t_2)$ gives
\begin{equation}
\frac{\partial}{\partial t_1} \frac{\partial}{\partial t_2} F_{0,2}^{\widetilde{M}(b,c)} (t_1,t_2) \; dt_1 \, dt_2 = \frac{dt_1 \, dt_2}{(t_1+t_2)^2} = \frac{dt_1 \, dt_2}{(t_1-t_2)^2} - (\tilde{\pi} \times \tilde{\pi})^* \frac{dx_1 \, dx_2}{(x_1-x_2)^2},
\end{equation}
where $\tilde{\pi}: \mathbb{P}^1 \to \mathbb{P}^1$ is the variable transformation 
\begin{equation}
x_i = 2c \bigg( \frac{t_i^2+1}{t_i^2-1} \bigg) + b
\end{equation}
which was defined in \eqref{eqn:bcmxtsub}.
\end{rem}

Below is the proof of Theorem~\ref{thm:bcmtoprec}. Detailed computations are provided in Appendix~\ref{subsect:appendixtoprec}.

\begin{proof}

For notational convenience, we define the functions $w_{g,v}^{\widetilde{M}(b,c)}$ by
\begin{equation}
W_{g,v}^{\widetilde{M}(b,c)}(t_1,t_2,\dots,t_v) = w_{g,v}^{\widetilde{M}(b,c)}(t_1,t_2,\dots,t_v) \; dt_1 \, dt_2 \cdots dt_v.
\end{equation}

We will use the following observations:

\begin{enumerate}
\item When $(g,v)$ is stable, the $w_{g,v}^{\widetilde{M}(b,c)}(t_1,t_2,\dots,t_v)$ are symmetric in the variables $t_i$. And, they are Laurent polynomials, so the only singularities can be when one of the $t_i$ is zero.
\item Further, $W_{g,v}^{\widetilde{M}(b,c)}$ is an odd differential form, so $w_{g,v}^{\widetilde{M}(b,c)}(t_1,t_2,\dots,t_v)$ is an even function. 
\end{enumerate}

We first apply the definition of the Eynard-Orantin differential forms to the differential recursion formula in Theorem~\ref{thm:bcmdiffrec}. Since there are four terms in this formula, we may write
\begin{multline}
W_{g,v}^{\widetilde{M}(b,c)}(t_1,t_2,\dots,t_v) = \frac{\partial}{\partial t_2} \cdots \frac{\partial}{\partial t_v} \bigg[ \frac{\partial}{\partial t_1} F_{g,v}^{\widetilde{M}(b,c)}(t_1,t_2,\dots,t_v) \bigg] dt_1 \, dt_2 \cdots dt_v \\
= \text{I}_{g,v}(t_1,t_2,\dots,t_v) + \text{II}_{g,v}(t_1,t_2,\dots,t_v) + \text{III}_{g,v}(t_1,t_2,\dots,t_v) + \text{IV}_{g,v}(t_1,t_2,\dots,t_v).
\end{multline}

Then, we will show that the result is equal to the formula given in equation \eqref{eqn:bcmtoprec} of Theorem~\ref{thm:bcmtoprec}.

We see that
\begin{multline*}
\text{I}_{g,v}(t_1,t_2,\dots,t_v) = \frac{1}{16} \sum_{j=2}^v \bigg[ \frac{\partial}{\partial t_j} \bigg( \frac{(t_j^2-1)^3}{t_j(t_1^2-t_j^2)} W_{g,v-1}^{\widetilde{M}(b,c)}(t_2,\dots,t_v) \bigg) \; dt_1 \\
- \frac{(t_1^2+t_j^2)(t_1^2-1)^3}{t_1^2(t_1^2-t_j^2)^2} W_{g,v-1}^{\widetilde{M}(b,c)}(t_1,t_2,\dots,\widehat{t_j},\dots,t_v) \; dt_j \bigg].
\end{multline*}

And, we may compute 
\begin{multline*}
\begin{aligned}
& - \frac{1}{64} \frac{1}{2\pi i} \int_{\gamma} \bigg( \frac{1}{t+t_1} + \frac{1}{t-t_1} \bigg) \frac{(t^2-1)^3}{t^2} \, \frac{1}{dt} \, dt_1 \\
& \quad \times \sum_{j=2}^v \bigg( W_{0,2}^{\widetilde{M}(b,c)}(t,t_j) W_{g,v-1}^{\widetilde{M}(b,c)}(-t,t_2,\dots,\widehat{t_j},\dots,t_v) \\
& \quad \quad \quad + W_{0,2}^{\widetilde{M}(b,c)}(-t,t_j) W_{g,v-1}^{\widetilde{M}(b,c)}(t,t_2,\dots,\widehat{t_j},\dots,t_v) \bigg) \\
\end{aligned} \\
= \frac{1}{64} \sum_{j=2}^v \bigg[ \frac{1}{2\pi i} \int_{\gamma} \bigg( \frac{2(t^2-1)^3/t}{(t+t_1)(t-t_1)} \bigg)\bigg( \frac{1}{(t-t_j)^2} + \frac{1}{(t+t_j)^2}  \bigg) \\
\cdot w_{g,v-1}^{\widetilde{M}(b,c)}(t,t_2,\dots,\widehat{t_j},\dots,t_v) \; dt \bigg] dt_1 \, dt_2 \cdots dt_v.
\end{multline*}

Applying the Cauchy Residue Theorem to evaluate this integral around the contour $\gamma$ given in Figure~\ref{fig:gammacontour}, and simplifying the result, shows that these two formulas are indeed equal. 

The second term in the differential recursion formula of Theorem~\ref{thm:bcmdiffrec} becomes zero when we differentiate with respect to $t_j$, so 
$$\text{II}_{g,v}(t_1,t_2,\dots,t_v) = 0.$$

Further, we may compute from the third term in the differential recursion formula that
$$\text{III}_{g,v}(t_1,t_2,\dots,t_v) = - \frac{1}{32} \frac{(t_1^2-1)^3}{t_1^2} W_{g-1,v+1}^{\widetilde{M}(b,c)}(t_1,t_1,t_2,\dots,t_v) \frac{1}{dt_1}.$$

And, we have
\begin{multline*}
- \frac{1}{64} \frac{1}{2\pi i} \int_{\gamma} \bigg( \frac{1}{t+t_1} + \frac{1}{t-t_1} \bigg) \frac{(t^2-1)^3}{t^2} \, \frac{1}{dt} \, dt_1 \times W_{g-1,v+1}^{\widetilde{M}(b,c)}(t,-t,t_2,\dots,t_v) \\
= \frac{1}{64} \bigg[ \frac{1}{2\pi i} \int_{\gamma} \bigg( \frac{1}{t+t_1} + \frac{1}{t-t_1} \bigg) \frac{(t^2-1)^3}{t^2} \; dt \bigg] \times w_{g-1,v+1}^{\widetilde{M}(b,c)}(t_1,t_1,t_2,\dots,t_v) \; dt_1 \, dt_2 \cdots dt_v.
\end{multline*}

Again applying the Cauchy Residue Theorem to this integral around $\gamma$ and simplifying the result shows that these two formulas are equal. 

Finally, from the fourth term in the differential recursion formula, we have
\begin{multline*}
\text{IV}_{g,v}(t_1,t_2,\dots,t_v) = \\
- \frac{1}{32} \frac{(t_1^2-1)^3}{t_1^2} \sum_{g_1+g_2=g, \; I \sqcup J = \{2,\dots,v\}, \; \text{stable}} W_{g_1,|I|+1}^{\widetilde{M}(b,c)}(t_1,t_I) \, W_{g_2,|J|+1}^{\widetilde{M}(b,c)}(t_1,t_J) \, \frac{1}{dt_1}. 
\end{multline*}

And, 
\begin{multline*}
\begin{aligned}
& - \frac{1}{64} \frac{1}{2\pi i} \int_{\gamma} \bigg( \frac{1}{t+t_1} + \frac{1}{t-t_1} \bigg) \frac{(t^2-1)^3}{t^2} \, \frac{1}{dt} \, dt_1 \\
& \times \bigg[ \sum_{g_1+g_2=g, \; I \sqcup J = \{2,\dots,v\}, \; \text{stable}} \frac{\partial}{\partial t_1} W_{g_1,|I|+1}^{\widetilde{M}(b,c)}(t,t_I) W_{g_2,|J|+1}^{\widetilde{M}(b,c)}(-t,t_J) \bigg] \\
\end{aligned} \\
= \frac{1}{64} \bigg[ \frac{1}{2\pi i} \int_{\gamma} \bigg( \frac{1}{t+t_1} + \frac{1}{t-t_1} \bigg) \frac{(t^2-1)^3}{t^2} \; dt \bigg] \\
\times \sum_{g_1+g_2=g, \; I \sqcup J = \{2,\dots,v\}, \; \text{stable}} w_{g_1,|I|+1}^D(t_1,t_I) w_{g_2,|J|+1}^D(t_1,t_J) \; dt_1 \, dt_2 \cdots dt_v. 
\end{multline*}

Applying the Cauchy Residue Theorem yet again shows that these two formulas are indeed equal. 

This concludes the proof of Theorem~\ref{thm:bcmtoprec}.
\end{proof}

From this topological recursion in Theorem~\ref{thm:bcmtoprec}, and the initial case of $W_{0,2}^{\widetilde{M}(b,c)}(t_1,t_2)$ given in that theorem, we can recursively compute all of the Eynard-Orantin differential forms $W_{g,v}^{\widetilde{M}(b,c)}(t_1,t_2,\dots,t_v)$.

Let us now look at some examples.

\begin{ex}
When $(g,v) = (1,1)$, we have
\begin{equation}
\begin{aligned}
W_{1,1}^{\widetilde{M}(b,c)}(t_1) & = - \frac{1}{64} \frac{1}{2\pi i} \int_{\gamma} \bigg( \frac{1}{t+t_1} + \frac{1}{t-t_1} \bigg) \frac{(t^2-1)^3}{t^2} \, \frac{1}{dt} \, dt_1 \times W_{0,2}^{\widetilde{M}(b,c)}(t,-t) \\
& = \frac{1}{128} \frac{1}{2\pi i} \bigg[ \int_{\gamma} \frac{(t^2-1)^3/t^3}{(t+t_1)(t-t_1)} \; dt\bigg] \, dt_1 \\
& = \frac{1}{128} \frac{1}{2\pi i} \bigg[ - 2\pi i \text{Res}_{t=t_1} \frac{(t^2-1)^3/t^3}{(t+t_1)(t-t_1)} - 2\pi i \text{Res}_{t=-t_1} \frac{(t^2-1)^3/t^3}{(t+t_1)(t-t_1)} \bigg] \, dt_1 \\
& = - \frac{1}{128} \frac{(t_1^2-1)^3}{t_1^4} \; dt_1. \\
\end{aligned}
\end{equation}
\end{ex}

\begin{ex}
Similarly, when $(g,v) = (0,3)$, we may compute
\begin{equation}
W_{0,3}^{\widetilde{M}(b,c)}(t_1,t_2,t_3) = -\frac{1}{16} \bigg( \frac{1}{t_1t_2t_3} - 1\bigg).
\end{equation}
\end{ex}


\section{Appendices}
\label{sect:appendices}
\renewcommand{\thesubsection}{\Alph{subsection}}


\subsection{Proof of the ``Vandermonde-Like'' Identity}
\label{subsect:appendixvandermonde}

In this appendix, we prove the ``Vandermonde-like'' identity which was used in the proof of the recursion formula for generalized $bc$-Motzkin numbers.

\begin{prop}
\label{prop:vandermondelike}
For all $0 \leq i+j \leq n$, we have
$$\sum_{a+b=n} \binom{a}{i} \binom{b}{j} = \binom{n+1}{i+j+1}.$$
\end{prop}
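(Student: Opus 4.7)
The plan is to give a combinatorial proof by double counting $(i+j+1)$-element subsets of the set $\{1, 2, \ldots, n+1\}$. Direct enumeration gives $\binom{n+1}{i+j+1}$, so it suffices to show that the left-hand side counts the same objects.

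To see this, I would stratify the subsets according to the location of their $(i+1)$-th smallest element. Fix such a subset $S$ and let $k$ denote its $(i+1)$-th smallest element, where $k \in \{i+1, \ldots, n+1-j\}$. Then the $i$ elements of $S$ smaller than $k$ form a subset of $\{1, \ldots, k-1\}$, contributing $\binom{k-1}{i}$ choices, and the $j$ elements of $S$ larger than $k$ form a subset of $\{k+1, \ldots, n+1\}$, contributing $\binom{n+1-k}{j}$ choices. Setting $a = k-1$ and $b = n+1-k$, we have $a + b = n$, and summing over $k$ yields
$$\binom{n+1}{i+j+1} = \sum_{k=i+1}^{n+1-j} \binom{k-1}{i} \binom{n+1-k}{j} = \sum_{a+b=n} \binom{a}{i} \binom{b}{j}.$$
The range of $k$ is precisely where both binomial coefficients are nonzero; outside this range the summands vanish by the standard convention $\binom{m}{\ell}=0$ for $m < \ell$, so the sum may equivalently be taken over all nonnegative $a,b$ with $a+b=n$.

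I do not expect any serious obstacle; the only point requiring a moment's care is the bookkeeping of the ranges of summation and the use of the vanishing convention for binomial coefficients with too-small upper index, which guarantees that the combinatorial identity matches the algebraic sum as written. An alternative one-line proof via generating functions is available using $\sum_{a \geq 0} \binom{a}{i} x^a = x^i/(1-x)^{i+1}$: the left-hand side is the coefficient of $x^n$ in $x^{i+j}/(1-x)^{i+j+2}$, which by the negative binomial expansion equals $\binom{n+1}{i+j+1}$, but I would prefer the combinatorial argument as it is more transparent and requires no analytic preliminaries.
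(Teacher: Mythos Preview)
Your proof is correct. The double-counting argument via the $(i+1)$-th smallest element of an $(i+j+1)$-subset of $\{1,\ldots,n+1\}$ is a standard and clean proof of this identity, and the bookkeeping on the summation range (with the convention $\binom{m}{\ell}=0$ for $m<\ell$) is handled properly.

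However, your approach differs from the paper's. The paper proves the identity by induction on $n$: the base case $n=0$ forces $i=j=0$ and is trivial, and the inductive step applies Pascal's identity $\binom{n+2}{i+j+1}=\binom{n+1}{i+j}+\binom{n+1}{i+j+1}$, invokes the inductive hypothesis on both summands, recombines $\binom{a}{i-1}+\binom{a}{i}=\binom{a+1}{i}$, and reindexes. Your bijective argument is arguably more transparent and requires no induction, giving a one-shot explanation of \emph{why} the identity holds; the paper's inductive proof is more mechanical but has the virtue of using only Pascal's identity, which is already in play elsewhere in the paper. Your generating-function alternative is also correct and would be the shortest route of the three, though it is the least combinatorial in spirit.
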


\begin{proof}
We will prove this statement using induction on $n$.

\emph{Base case.} Assume $n = 0$. This implies that we must have $i=j=0$, so
$$\sum_{a+b=0} \binom{a}{0} \binom{b}{0} =  \binom{0}{0} \binom{0}{0} = 1 \cdot 1 = 1 = \binom{0+1}{0+0+1}.$$
Thus the claim holds for $n=0$.

\emph{Inductive case.} Assume we know 
$$\sum_{a+b=n} \binom{a}{i} \binom{b}{j} = \binom{n+1}{i+j+1}$$
is true for all $0 \leq i+j \leq n$.

Then, using Pascal's Identity and the inductive hypothesis, 

\begin{align*}
\binom{(n+1)+1}{i+j+1} & = \binom{n+1}{i+j} + \binom{n+1}{i+j+1} \\
& = \sum_{a+b=n} \binom{a}{i-1} \binom{b}{j} + \sum_{a+b=n} \binom{a}{i} \binom{b}{j} \\
& = \sum_{a+b=n} \bigg[ \binom{a}{i-1} + \binom{a}{i} \bigg] \binom{b}{j} \\
& = \sum_{a+b=n} \binom{a+1}{i} \binom{b}{j} \\
& = \sum_{a+b=n+1} \binom{a}{i} \binom{b}{j} \\
\end{align*}

which proves Proposition~\ref{prop:vandermondelike}.
\end{proof}


\subsection{Proof of the Recursion Formula for $(0,1)$-$bc$-Motzkin Numbers}
\label{subsect:appendix01proof}

Here, we provide a proof the recursion formula for $(0,1)$-$bc$-Motzkin numbers which was given in Proposition~\ref{prop:bcM01rec}.

\begin{proof}
Using equation \eqref{eqn:bcmotzkin01} as the definition of the $(0,1)$-$bc$-Motzkin numbers, we may compute
\begin{align*}
& \quad \widetilde{M}_{0,1}(n;b,c) - b \, \widetilde{M}_{0,1}(n-1;b,c) \\
& = \sum_{\mu=0}^{n} \binom{n}{\mu} \, C_{0,1}(\mu) \, b^{n-\mu}c^{\mu} - b \, \sum_{\mu=0}^{n-1} \binom{n-1}{\mu} \, C_{0,1}(\mu) \, b^{n-1-\mu}c^{\mu} \\
& = C_{0,1}(n) \, c^{n} + \sum_{\mu=0}^{n-1} \left[ \binom{n}{\mu} - \binom{n-1}{\mu} \right] C_{0,1}(\mu) \, b^{n-\mu}c^{\mu} \\
& = C_{0,1}(n) \, c^{n} + \sum_{\mu=0}^{n-1} \binom{n-1}{\mu-1} C_{0,1}(\mu) \, b^{n-\mu}c^{\mu} \\
& = \sum_{\mu=0}^{n} \binom{n-1}{\mu-1} C_{0,1}(\mu) \, b^{n-\mu}c^{\mu} \\
& = \sum_{\mu=0}^{n} \binom{n-1}{\mu-1} \left[\sum_{i+j=\mu-2} C_{0,1}(i) C_{0,1}(j) \right] \, b^{n-\mu}c^{\mu} \\
& = c^2 \sum_{\mu-2=0}^{n-2} \sum_{i+j=\mu-2} \binom{(n-2)+1}{(\mu-2)+1} C_{0,1}(i) C_{0,1}(j) b^{(n-2)-(\mu-2)} c^{\mu-2} \\
& = c^2 \sum_{\mu=0}^{n-2} \sum_{i+j=\mu} \binom{(n-2)+1}{i+j+1} C_{0,1}(i) C_{0,1}(j) b^{(n-2)-(i+j)} c^{i+j} \\
& = c^2 \sum_{\mu=0}^{n-2} \sum_{i+j=\mu} \left[ \sum_{\alpha+\beta=n-2} \binom{\alpha}{i} \binom{\beta}{j} \right] C_{0,1}(i) C_{0,1}(j) b^{(\alpha+\beta)-(i+j)} c^{i+j} \\
& = c^2 \sum_{\alpha+\beta=n-2} \left[ \sum_{i=0}^{\alpha} \binom{\alpha}{i} C_{0,1}(i) b^{\alpha-i} c^{i} \right] \left[ \sum_{j=0}^{\beta} \binom{\beta}{j} C_{0,1}(j) b^{\beta-j} c^{j} \right] \\
& = c^2 \sum_{\alpha+\beta=n-2} \widetilde{M}_{0,1}(\alpha,b,c) \widetilde{M}_{0,1}(\beta,b,c) \\
\end{align*}

Here, we recall from equation \eqref{eqn:catrec} that the Catalan numbers satisfy the recursion formula
$$C_{0,1}(\mu) = \sum_{i+j=\mu-2}C_{0,1}(i)C_{0,1}(j).$$
And, we also used the ``Vandermonde-like’’ identity 
\begin{equation}
\label{eqn:vandermondelike1}
\sum_{a+b=n} \binom{a}{i} \binom{b}{j} = \binom{n+1}{i+j+1},
\end{equation}
which is proved in Appendix~\ref{subsect:appendixvandermonde}.

This proves Proposition~\ref{prop:bcM01rec}.
\end{proof}


\subsection{Algebraic Proof of the Recursion Formula for Generalized $bc$-Motzkin Numbers}
\label{subsect:appendixrecursion}

Here, we provide a complete algebraic proof of the recursion formula for generalized $bc$-Motzkin numbers, which was given in Theorem~\ref{thm:genbcmrec}.

\begin{proof}

We first apply Definition~\ref{def:genbcmotzkin} of the generalized $bc$-Motzkin numbers, and Pascal's identity, to simplify the left side of the formula in Theorem~\ref{thm:genbcmrec}.
\begin{multline}
\widetilde{M}_{g,v}(n_1,n_2,\dots,n_v;b,c) - b \widetilde{M}_{g,v}(n_1-1,n_2,\dots,n_v;b,c) \\
= \sum_{\mu_1=0}^{n_1} \sum_{\mu_2=0}^{n_2} \cdots \sum_{\mu_v=0}^{n_v} \binom{n_1-1}{\mu_1-1} \binom{n_2}{\mu_2} \cdots \binom{n_v}{\mu_v} \\
\cdot C_{g,v}(\mu_1,\dots,\mu_v) \, b^{(n_1+\cdots+n_v)-(\mu_1+\cdots+\mu_v)}c^{\mu_1+\cdots+\mu_v} 
\end{multline}

Then, we plug in the Catalan ``recursion'' formula of Proposition~\ref{prop:gencatrec} to the above equation, simplify the resulting three terms, and re-write them in terms of generalized $bc$-Motzkin numbers. The key ingredients in this proof are Vandermonde’s identity,
\begin{equation}
\label{eqn:appvandermonde}
\sum_{i+j=k}  \binom{a}{i}  \binom{b}{j} =  \binom{a + b}{k},
\end{equation}
and a particular version of the ``Vandermonde-like'' identity proved in Appendix~\ref{subsect:appendixvandermonde},
\begin{equation}
\label{eqn:appvandermondelike}
\sum_{\zeta+\xi=k} \binom{\zeta-1}{\alpha-1} \binom{\xi}{\beta} = \binom{k}{\alpha+\beta}.
\end{equation}

For notational convenience, we let
\begin{multline}
\label{eqn:recthreeterms}
\widetilde{M}_{g,v}(n_1,\dots,n_v;b,c) - b \widetilde{M}_{g,v}(n_1-1,n_2,\dots,n_v;b,c) \\
= \text{I}_{g,v}(n_1,\dots,n_v;b,c) + {\text{II}}_{g,v}(n_1,\dots,n_v;b,c) + {\text{III}}_{g,v}(n_1,\dots,n_v;b,c), 
\end{multline}
where the three terms correspond to the three terms of the Catalan ``recursion'' formula.

To shorten notation, we will sometimes write 
$$\vec{n} = (n_1,\dots,n_v)$$ 
and 
$$|\vec{n}| = n_1 + \cdots + n_v,$$
and similarly for $\vec{\mu}$.

For the first term, we have
\begin{multline*}
\text{I}_{g,v}(\vec{n};b,c) = \sum_{j=2}^v \sum_{\mu_2=0}^{n_2} \cdots \sum_{\mu_{j-1}=0}^{n_{j-1}} \sum_{\mu_{j+1}=0}^{n_{j+1}} \cdots \sum_{\mu_v=0}^{n_v} \binom{n_2}{\mu_2} \cdots \binom{n_{j-1}}{\mu_{j-1}} \binom{n_{j+1}}{\mu_{j+1}} \cdots \binom{n_v}{\mu_v} \\
\cdot \bigg[ \sum_{\mu_1=0}^{n_1} \sum_{\mu_j=0}^{n_j} \binom{n_1-1}{\mu_1-1} \binom{n_j}{\mu_j} \mu_j C_{g,v-1} (\mu_1 + \mu_j - 2, \mu_2, \dots, \hat{\mu_j}, \dots, \mu_v) \, b^{|\vec{n}|-|\vec{\mu}|}c^{|\vec{\mu}|} \bigg].
\end{multline*}

Now, since
$$\binom{n_j}{\mu_j} \cdot \mu_j = \binom{n_j-1}{\mu_j-1} \cdot n_j,$$
the term in brackets becomes
\begin{align*}
& \quad \sum_{\mu_1=1}^{n_1} \sum_{\mu_j=1}^{n_j} \binom{n_1-1}{\mu_1-1} \binom{n_j-1}{\mu_j-1} n_j \, C_{g,v-1} (\mu_1 + \mu_j - 2, \mu_2, \dots, \hat{\mu_j}, \dots, \mu_v) \, b^{|\vec{n}|-|\vec{\mu}|}c^{|\vec{\mu}|} \\
& = \sum_{\mu_1=0}^{n_1-1} \sum_{\mu_j=0}^{n_j-1} \binom{n_1-1}{\mu_1} \binom{n_j-1}{\mu_j} n_j \, C_{g,v-1} (\mu_1 + \mu_j, \mu_2, \dots, \hat{\mu_j}, \dots, \mu_v) \, b^{|\vec{n}|-(2+|\vec{\mu}|)}c^{2+|\vec{\mu}|} \\
& = \sum_{k=0}^{n_1+n_j-2} \sum_{\mu_1+\mu_j=k} \binom{n_1-1}{\mu_1} \binom{n_j-1}{\mu_j} n_j \, C_{g,v-1} (k, \mu_2, \dots, \hat{\mu_j}, \dots, \mu_v) \\
& \quad \cdot b^{|\vec{n}|-(2+k+\mu_2+\cdots+\widehat{\mu_j}+\cdots+\mu_v)}c^{(2+k+\mu_2+\cdots+\widehat{\mu_j}+\cdots+\mu_v)} \\
& = \sum_{k=0}^{n_1+n_j-2} \binom{n_1+n_j-2}{k} n_j \, C_{g,v-1} (k, \mu_2, \dots, \hat{\mu_j}, \dots, \mu_v) \\
& \quad \cdot c^2 b^{((n_1+n_j-2)+n_2+\cdots+\widehat{n_j}+\cdots+n_v)-(k+\mu_2+\cdots+\widehat{\mu_j}+\cdots+\mu_v)}c^{(k+\mu_2+\cdots+\widehat{\mu_j}+\cdots+\mu_v)} \\
\end{align*}
where we have applied Vandermonde's identity \eqref{eqn:appvandermonde}.

Substituting this back into the above expression for $\text{I}_{g,v}(\vec{n};b,c)$ then gives
$$\text{I}_{g,v}(\vec{n};b,c) = c^2 \sum_{j=2}^v n_j \, \widetilde{M}_{g,v-1} (n_1+n_j-2, n_2, \dots, \hat{n_j}, \dots, n_v; b, c).$$

Now, for the second term, we may proceed as follows, using the Vandermonde-like identity of equation \eqref{eqn:appvandermondelike}.
\begin{align*}
{\text{II}}_{g,v}(\vec{n};b,c) & = \sum_{\mu_2=0}^{n_2} \cdots \sum_{\mu_v=0}^{n_v} \binom{n_2}{\mu_2} \cdots \binom{n_v}{\mu_v} \bigg[ \sum_{\mu_1=0}^{n_1} \sum_{\alpha + \beta = \mu_1 - 2} \binom{n_1-1}{\mu_1-1} \bigg] \\
& \quad \cdot C_{g-1,v+1}(\alpha,\beta,\mu_2,\dots,\mu_v) \, b^{|\vec{n}|-|\vec{\mu}|}c^{|\vec{\mu}|} \\
& = \sum_{\mu_2=0}^{n_2} \cdots \sum_{\mu_v=0}^{n_v} \binom{n_2}{\mu_2} \cdots \binom{n_v}{\mu_v} \bigg[ \sum_{\mu_1=0}^{n_1} \sum_{\alpha + \beta = \mu_1 - 2} \sum_{k=0}^{n_1-2} \binom{k}{\mu_1-2} \bigg] \\
& \quad \cdot C_{g-1,v+1}(\alpha,\beta,\mu_2,\dots,\mu_v) \, b^{|\vec{n}|-|\vec{\mu}|}c^{|\vec{\mu}|} \\
& = \sum_{\mu_2=0}^{n_2} \cdots \sum_{\mu_v=0}^{n_v} \binom{n_2}{\mu_2} \cdots \binom{n_v}{\mu_v} \bigg[ \sum_{\mu_1=0}^{n_1} \sum_{\alpha + \beta = \mu_1 - 2} \sum_{k=0}^{n_1-2} \sum_{\zeta+\xi=k} \binom{\zeta-1}{\alpha-1} \binom{\xi}{\beta} \bigg] \\
& \quad \cdot C_{g-1,v+1}(\alpha,\beta,\mu_2,\dots,\mu_v) \, b^{|\vec{n}|-|\vec{\mu}|}c^{|\vec{\mu}|} \\
& = \sum_{\mu_2=0}^{n_2} \cdots \sum_{\mu_v=0}^{n_v} \binom{n_2}{\mu_2} \cdots \binom{n_v}{\mu_v} \bigg[ \sum_{k=0}^{n_1-2} \sum_{\zeta+\xi=k} \sum_{\alpha=0}^{\zeta} \sum_{\beta=0}^{\xi} \binom{\zeta-1}{\alpha-1} \binom{\xi}{\beta} \bigg] \\
& \quad \cdot C_{g-1,v+1}(\alpha,\beta,\mu_2,\dots,\mu_v) \, b^{|\vec{n}|-((\alpha+\beta+2)+\mu_2+\cdots+\mu_v)} c^{(\alpha+\beta+2)+\mu_2+\cdots+\mu_v} \\
& = \sum_{k=0}^{n_1-2} \sum_{\zeta+\xi=k} \sum_{\alpha=0}^{\zeta} \sum_{\beta=0}^{\xi} \sum_{\mu_2=0}^{n_2} \cdots \sum_{\mu_v=0}^{n_v} \binom{\zeta-1}{\alpha-1} \binom{\xi}{\beta} \binom{n_2}{\mu_2} \cdots \binom{n_v}{\mu_v} \\
& \quad \cdot C_{g-1,v+1}(\alpha,\beta,\mu_2,\dots,\mu_v) \, b^{n_1-2-k} b^{((\zeta+\xi)+n_2+\cdots+n_v)-((\alpha+\beta)+\mu_2+\cdots+\mu_v)} \\
& \quad \cdot c^{(\alpha+\beta+2)+\mu_2+\cdots+\mu_v} \\
& = c^2 \sum_{k=0}^{n_1-2} b^{(n_1-2)-k} \sum_{\zeta+\xi=k} \bigg[ \widetilde{M}_{g-1,v+1}(\zeta,\xi,n_2,\dots,n_v;b,c) \\
& \quad - b \widetilde{M}_{g-1,v+1}(\zeta-1,\xi,n_2,\dots,n_v;b,c) \bigg] \\
& = c^2 \bigg[ \sum_{k=1}^{n_1-2} b^{(n_1-2)-k} \sum_{\zeta+\xi=k} \widetilde{M}_{g-1,v+1}(\zeta,\xi,n_2,\dots,n_v;b,c) \\
&  \quad - \sum_{k=1}^{n_1-2} b^{(n_1-2)-(k-1)} \sum_{\zeta+\xi=k-1} \widetilde{M}_{g-1,v+1}(\zeta,\xi,n_2,\dots,n_v;b,c) \bigg] \\
& = c^2 \bigg[ \sum_{k=1}^{n_1-2} b^{(n_1-2)-k} \sum_{\zeta+\xi=k} \widetilde{M}_{g-1,v+1}(\zeta,\xi,n_2,\dots,n_v;b,c) \\
&  \quad - \sum_{k=0}^{n_1-3} b^{(n_1-2)-k} \sum_{\zeta+\xi=k} \widetilde{M}_{g-1,v+1}(\zeta,\xi,n_2,\dots,n_v;b,c) \bigg] \\
& = c^2 b^{(n_1-2)-(n_1-2)} \sum_{\zeta+\xi=n_1-2} \widetilde{M}_{g-1,v+1}(\zeta,\xi,n_2,\dots,n_v;b,c) \\
& = c^2 \sum_{\zeta+\xi=n_1-2} \widetilde{M}_{g-1,v+1}(\zeta,\xi,n_2,\dots,n_v;b,c). \\
\end{align*}

Finally, for the third term, we have
\begin{align*}
{\text{III}}_{g,v}(\vec{n};b,c) & = \sum_{g_1+g_2=g, \; I \sqcup J = \{2,\dots,v\}} \sum_{\mu_2=0}^{n_2} \cdots \sum_{\mu_v=0}^{n_v} \binom{n_2}{\mu_2} \cdots \binom{n_v}{\mu_v} \\
& \quad \cdot\bigg[ \sum_{\mu_1=0}^{n_1} \sum_{\alpha + \beta = \mu_1 - 2} \binom{n_1-1}{\mu_1-1} \bigg] C_{g_1,|I|+1}(\alpha,\mu_I)C_{g_2,|J|+1}(\beta,\mu_J) b^{|\vec{n}|-|\vec{\mu}|}c^{|\vec{\mu}|} \\
& = \sum_{g_1+g_2=g, \; I \sqcup J = \{2,\dots,v\}} \sum_{\mu_2=0}^{n_2} \cdots \sum_{\mu_v=0}^{n_v} \binom{n_2}{\mu_2} \cdots \binom{n_v}{\mu_v} \\
& \quad \cdot \bigg[ \sum_{\mu_1=0}^{n_1} \sum_{\alpha + \beta = \mu_1 - 2} \sum_{\zeta+\xi=n_1-2} \binom{\zeta}{\alpha} \binom{\xi}{\beta} \bigg] C_{g_1,|I|+1}(\alpha,\mu_I)C_{g_2,|J|+1}(\beta,\mu_J) \\
& \quad \cdot b^{((\zeta+\xi)+n_2+\cdots+n_v)-((\alpha + \beta)+\mu_2+\cdots+\mu_v)}c^{(\alpha + \beta+2)+\mu_2+\cdots+\mu_v} \\
& = c^{2} \sum_{g_1+g_2=g, \; I \sqcup J = \{2,\dots,v\}} \sum_{\mu_2=0}^{n_2} \cdots \sum_{\mu_v=0}^{n_v} \binom{n_2}{\mu_2} \cdots \binom{n_v}{\mu_v} \\
& \quad \cdot\bigg[ \sum_{\zeta+\xi=n_1-2} \sum_{\alpha=0}^{\zeta} \sum_{\beta=0}^{\xi} \binom{\zeta}{\alpha} \binom{\xi}{\beta} \bigg] C_{g_1,|I|+1}(\alpha,\mu_I)C_{g_2,|J|+1}(\beta,\mu_J) \\
& \quad \cdot b^{(\zeta+\xi+n_2+\cdots+n_v)-(\alpha+\beta+\mu_2+\cdots+\mu_v)}c^{\alpha+\beta+\mu_2+\cdots+\mu_v} \\
& = c^2 \sum_{g_1+g_2=g, \; I \sqcup J = \{2,\dots,v\}} \sum_{\zeta+\xi=n_1-2} \widetilde{M}_{g_1,|I|+1}(\zeta,n_I;b,c)\widetilde{M}_{g_2,|J|+1}(\xi,n_J;b,c) \\
\end{align*}
where we again used the Vandermonde-like identity of equation \eqref{eqn:appvandermondelike}.

Putting all this back into equation \eqref{eqn:recthreeterms} thus completes the proof.
\end{proof}


\subsection{Detailed Proof of the $bc$-Motzkin Differential Recursion Formula}
\label{subsect:appendixdiffrec}

In this section, we provide a detailed proof of the differential recursion formula for generalized $bc$-Motzkin numbers, which was given in equation \eqref{eqn:bcmdr01} and Theorem~\ref{thm:bcmdiffrec}.

For the case when $(g,v) = (0,1)$, we may compute
\begin{align*}
\frac{\partial}{\partial x} F_{0,1}^{\widetilde{M}(b,c)}(x) & = - \sum_{n=1}^{\infty} \widetilde{M}_{0,1}(n;b,c) x^{-n-1} - \widetilde{M}_{0,1}(0;b,c) x^{-1} \\
& = - x^{-1} - \sum_{n=1}^{\infty} \bigg( b \widetilde{M}_{0,1}(n-1;b,c) \\
& \quad + c^2 \sum_{\zeta+\xi=n-2} \widetilde{M}_{0,1}(\zeta;b,c) \widetilde{M}_{0,1}(\xi;b,c) \bigg) x^{-n-1} \\
& = - x^{-1} - b\sum_{k+1=1}^{\infty} \widetilde{M}_{0,1}(k;b,c) \, x^{-(k+1)-1} \\
& \quad - c^2\sum_{\zeta=0}^{\infty} \sum_{\xi=0}^{\infty} \widetilde{M}_{0,1}(\zeta;b,c) \widetilde{M}_{0,1}(\xi;b,c) \, x^{-(\zeta+\xi+2)-1} \\
& = - x^{-1} - b x^{-1} \sum_{k=0}^{\infty} \widetilde{M}_{0,1}(k;b,c) \, x^{-k-1} \\
& \quad - c^2 x^{-1} \sum_{\zeta=0}^{\infty} \sum_{\xi=0}^{\infty} \widetilde{M}_{0,1}(\zeta;b,c) \widetilde{M}_{0,1}(\xi;b,c) \, x^{-\zeta-1}x^{-\xi-1} \\
& = - x^{-1} - b x^{-1} \bigg( - \frac{\partial}{\partial x} F_{0,1}^{\widetilde{M}(b,c)}(x) \bigg) - c^2 x^{-1} \bigg( - \frac{\partial}{\partial x} F_{0,1}^{\widetilde{M}(b,c)}(x) \bigg)^2 \\
\end{align*}

which implies
$$c^2 \bigg( \frac{\partial}{\partial x} F_{0,1}^{\widetilde{M}(b,c)}(x) \bigg)^2 + (x-b) \bigg( \frac{\partial}{\partial x} F_{0,1}^{\widetilde{M}(b,c)}(x) \bigg) + 1 = 0.$$

Thus,
$$\frac{\partial}{\partial x} F_{0,1}^{\widetilde{M}(b,c)}(x) = \frac{1}{2c} \bigg[ - \bigg( \frac{x-b}{c} \bigg) - \sqrt{ \bigg( \frac{x-b}{c} \bigg)^2 - 4} \; \bigg],$$
where we took the negative square root in the quadratic formula.

Now, we define $t(x,b,c)$ to be such that 
$$\frac{x - b}{c} = 2 + \frac{4}{t^2-1} = 2\frac{t^2+1}{t^2-1}.$$

Then,
$$\frac{\partial x}{\partial t} = c \frac{-8t}{(t^2-1)^2},$$
and we obtain
\begin{align*}
\frac{\partial}{\partial t} F_{0,1}^{\widetilde{M}(b,c)}(t) & = \frac{\partial x}{\partial t} \frac{\partial}{\partial x} F_{0,1}^{\widetilde{M}(b,c)}(x) \\
& = \bigg( c\frac{-8t}{(t^2-1)^2}\bigg) \frac{1}{2c} \bigg[ - 2\frac{t^2+1}{t^2-1} - \sqrt{ \bigg( 2\frac{t^2+1}{t^2-1} \bigg)^2 - 4 } \; \bigg] \\
& = \frac{-8t}{(t^2-1)^2} \bigg[ - \frac{t^2+1}{t^2-1} - \sqrt{ \frac{ (t^2+1)^2 - (t^2-1)^2}{(t^2-1)^2} } \; \bigg] \\
& = \frac{8t}{(t+1)(t-1)^3}. \\
\end{align*}

This gives equation \eqref{eqn:bcmdr01}.

More generally, for the case when $(g,v) \neq (0,1)$, we may proceed as follows to prove Theorem~\ref{thm:bcmdiffrec}.

\begin{proof}

With notation as in Section~\ref{sect:diffrecformula}, we have the following computations.

For the first term, 
\begin{align*}
\text{I}_{g,v}(x_1,x_2,\dots,x_v) & = \sum_{n_1=1}^{\infty} \cdots \sum_{n_v=1}^{\infty} \bigg[ b \widetilde{M}_{g,v}(n_1-1,n_2,\dots,n_v;b,c) \bigg] \frac{x_1^{-n_1-1} x_2^{-n_2} \cdots x_v^{-n_v}}{n_2 \cdots n_v} \\
& = bx_1^{-1} \sum_{k=1}^{\infty} \sum_{n_2=1}^{\infty} \cdots \sum_{n_v=1}^{\infty} \frac{\widetilde{M}_{g,v}(k,n_2,\dots,n_v;b,c)}{n_2 \cdots n_v} x_1^{-k-1} x_2^{-n_2} \cdots x_v^{-n_v} \\
& = -bx_1^{-1} \frac{\partial}{\partial x_1} F_{g,v}^{\widetilde{M}(b,c)}(x_1,x_2,\dots,x_v). \\
\end{align*}

For the second term, 
\begin{align*}
& \quad \text{II}_{g,v}(x_1,x_2,\dots,x_v) \\
& = \sum_{n_1=1}^{\infty} \cdots \sum_{n_v=1}^{\infty} \bigg[ c^2 \sum_{j=2}^v n_j \, \widetilde{M}_{g,v-1} (n_1+n_j-2, n_2, \dots, \widehat{n_j}, \dots, n_v; b; c) \bigg] \frac{x_1^{-n_1-1} x_2^{-n_2} \cdots x_v^{-n_v}}{n_2 \cdots n_v} \\
& = c^2 \sum_{j=2}^v \sum_{n_1=1}^{\infty} \cdots \sum_{n_v=1}^{\infty} \frac{\widetilde{M}_{g,v-1} (n_1+n_j-2, n_2, \dots, \widehat{n_j}, \dots, n_v; b; c)}{n_2 \cdots \widehat{n_j} \cdots n_v} x_1^{-n_1-1} x_2^{-n_2} \cdots x_v^{-n_v} \\
& = c^2 \sum_{j=2}^v \sum_{k=1}^{\infty} \sum_{n_2=1}^{\infty} \cdots \widehat{ \sum_{n_j=1}^{\infty} } \cdots \sum_{n_v=1}^{\infty} \sum_{n_1+n_j=k+2} \frac{\widetilde{M}_{g,v-1} (k, n_2, \dots, \widehat{n_j}, \dots, n_v; b; c)}{n_2 \cdots \widehat{n_j} \cdots n_v} \\
& \quad \cdot (x_1^{-n_1-1} x_j^{-n_j}) x_2^{-n_2} \cdots \widehat{ x_j^{-n_j} } \cdots x_v^{-n_v} \\
& = c^2 \sum_{j=2}^v \sum_{k=1}^{\infty} \sum_{n_2=1}^{\infty} \cdots \widehat{ \sum_{n_j=1}^{\infty} } \cdots \sum_{n_v=1}^{\infty} \frac{\widetilde{M}_{g,v-1} (k, n_2, \dots, \widehat{n_j}, \dots, n_v; b; c)}{n_2 \cdots \widehat{n_j} \cdots n_v} \\
& \quad \cdot \bigg[ \sum_{n_1+n_j=k+2} (x_1^{-n_1-1} x_j^{-n_j}) \bigg] x_2^{-n_2} \cdots \widehat{ x_j^{-n_j} } \cdots x_v^{-n_v} \\
& = c^2 \sum_{j=2}^v \sum_{k=1}^{\infty} \sum_{n_2=1}^{\infty} \cdots \widehat{ \sum_{n_j=1}^{\infty} } \cdots \sum_{n_v=1}^{\infty} \frac{\widetilde{M}_{g,v-1} (k, n_2, \dots, \widehat{n_j}, \dots, n_v; b; c)}{n_2 \cdots \widehat{n_j} \cdots n_v} \\
& \quad \bigg[ \frac{x_1^{-k-1}-x_j^{-k-1}}{x_1(x_j-x_1)} \bigg] x_2^{-n_2} \cdots \widehat{ x_j^{-n_j} } \cdots x_v^{-n_v} \\
& = c^2 \sum_{j=2}^v \frac{1}{x_1(x_j-x_1)} \bigg[ \sum_{k=1}^{\infty} \sum_{n_2=1}^{\infty} \cdots \widehat{ \sum_{n_j=1}^{\infty} } \cdots \sum_{n_v=1}^{\infty} \frac{\widetilde{M}_{g,v-1} (k, n_2, \dots, \widehat{n_j}, \dots, n_v; b; c)}{n_2 \cdots \widehat{n_j} \cdots n_v} \\
& \quad  \cdot \big[ x_1^{-k-1} x_2^{-n_2} \cdots \widehat{ x_j^{-n_j} } \cdots x_v^{-n_v} - x_2^{-n_2} \cdots x_{j-1}^{-n_{j-1}} x_j^{-k-1} x_{j+1}^{-n_{j+1}} \cdots x_v^{-n_v} \big] \bigg] \\
& = c^2 \sum_{j=2}^v \frac{1}{x_1(x_j-x_1)} \bigg[ -\frac{\partial}{\partial x_1} F_{g,v-1}^{\widetilde{M}(b,c)}(x_1,\dots,\widehat{x_j},\dots,x_v) + \frac{\partial}{\partial x_j} F_{g,v-1}^{\widetilde{M}(b,c)}(x_2,\dots,x_v) \bigg]. \\
\end{align*}

For the third term, 
\begin{align*}
& \quad \text{III}_{g,v}(x_1,x_2,\dots,x_v) \\
& = \sum_{n_1=1}^{\infty} \cdots \sum_{n_v=1}^{\infty} \bigg[ c^2 \sum_{\zeta+\xi=n_1-2} \widetilde{M}_{g-1,v+1}(\zeta,\xi,n_2,\dots,n_v;b,c) \bigg] \frac{x_1^{-n_1-1} x_2^{-n_2} \cdots x_v^{-n_v}}{n_2 \cdots n_v} \\
& = c^2 \sum_{\zeta=1}^{\infty} \sum_{\xi=1}^{\infty} \sum_{n_2=1}^{\infty} \cdots \sum_{n_v=1}^{\infty} \bigg[ \widetilde{M}_{g-1,v+1}(\zeta,\xi,n_2,\dots,n_v;b,c) \bigg] \frac{x_1^{-(\zeta+\xi+2)-1} x_2^{-n_2} \cdots x_v^{-n_v}}{n_2 \cdots n_v} \\
& = c^2 x_1^{-1} \sum_{\zeta=1}^{\infty} \sum_{\xi=1}^{\infty} \sum_{n_2=1}^{\infty} \cdots \sum_{n_v=1}^{\infty} \frac{\widetilde{M}_{g-1,v+1}(\zeta,\xi,n_2,\dots,n_v;b,c)}{n_2 \cdots n_v} x_1^{-\zeta-1}x_1^{\xi-1} x_2^{-n_2} \cdots x_v^{-n_v} \\
& = c^2 x_1^{-1} \frac{\partial}{\partial u_1} \frac{\partial}{\partial u_2} F_{g-1,v+1}^{\widetilde{M}(b,c)}(u_1,u_2,x_2,\dots,x_v) \bigg|_{u_1=u_2=x_1}. \\
\end{align*}

For the fourth term, 
\begin{align*}
& \quad \text{IV}_{g,v}(x_1,x_2,\dots,x_v) \\
& = \sum_{n_1=1}^{\infty} \cdots \sum_{n_v=1}^{\infty} \bigg[ c^2 \sum_{\zeta+\xi=n_1-2} \sum_{g_1+g_2=g, \; I \sqcup J = \{2,\dots,v\}} \widetilde{M}_{g_1,|I|+1}(\zeta,n_I;b,c)\widetilde{M}_{g_2,|J|+1}(\xi,n_J;b,c) \bigg] \\
& \quad \cdot \frac{x_1^{-n_1-1} x_2^{-n_2} \cdots x_v^{-n_v}}{n_2 \cdots n_v} \\
& = c^2 \sum_{g_1+g_2=g, \; I \sqcup J = \{2,\dots,v\}} \bigg[ \sum_{\zeta=1}^{\infty} \sum_{n_I=1}^{\infty} \sum_{\xi=1}^{\infty} \sum_{n_J=1}^{\infty} \widetilde{M}_{g_1,|I|+1}(\zeta,n_I;b,c)\widetilde{M}_{g_2,|J|+1}(\xi,n_J;b,c) \bigg] \\
& \quad \cdot \frac{x_1^{-(\zeta+\xi+2)-1} \prod_{i \in I} x_i^{-n_i} \prod_{j\in J} x_j^{-n_j} }{\prod_{i \in I} n_i \prod_{j\in J} n_j} \\
& = c^2 x_1^{-1} \sum_{g_1+g_2=g, \; I \sqcup J = \{2,\dots,v\}} \frac{\partial}{\partial x_1} F_{g_1,|I|+1}^{\widetilde{M}(b,c)}(x_1,x_I) \cdot \frac{\partial}{\partial x_1} F_{g_2,|J|+1}^{\widetilde{M}(b,c)}(x_1,x_J). \\
\end{align*}

Now, when $(g,v) = (0,2)$, we have
\begin{align*}
\frac{\partial}{\partial x_1} F_{0,2}^{\widetilde{M}(b,c)}(x_1,x_2) & = bx_1^{-1} \frac{\partial}{\partial x_1} F_{0,2}^{\widetilde{M}(b,c)}(x_1,x_2) \\
& \quad - c^2 \frac{1}{x_1(x_2-x_1)} \bigg[ -\frac{\partial}{\partial x_1} F_{0,1}^{\widetilde{M}(b,c)}(x_1) + \frac{\partial}{\partial x_2} F_{0,1}^{\widetilde{M}(b,c)}(x_2) \bigg] \\
& \quad - 2 c^2 x_1^{-1} \frac{\partial}{\partial x_1} F_{0,1}^{\widetilde{M}(b,c)}(x_1) \cdot \frac{\partial}{\partial x_1} F_{0,2}^{\widetilde{M}(b,c)}(x_1,x_2). \\
\end{align*}

This implies
\begin{align*}
0 & = \bigg( \frac{x_1-b}{c^2} + 2 \frac{\partial}{\partial x_1} F_{0,1}^{\widetilde{M}(b,c)}(x_1) \bigg) \frac{\partial}{\partial x_1} F_{0,2}^{\widetilde{M}(b,c)}(x_1,x_2) \\
& \quad + \frac{1}{(x_2-x_1)} \bigg( -\frac{\partial}{\partial x_1} F_{0,1}^{\widetilde{M}(b,c)}(x_1) + \frac{\partial}{\partial x_2} F_{0,1}^{\widetilde{M}(b,c)}(x_2) \bigg). \\
\end{align*}

The change of variables formula in equation \eqref{eqn:bcmxtsub} then implies
$$\frac{\partial x_i}{\partial t_i} = c \frac{-8t_i}{(t_i^2-1)^2}.$$

Now, observe that
\begin{align*}
\frac{1}{(x_j-x_1)} & = \frac{1}{\displaystyle \bigg( 2c\frac{t_j^2+1}{t_j^2-1} + b \bigg) - \bigg( 2c\frac{t_1^2+1}{t_1^2-1} + b \bigg)} \\
& = \frac{1}{2c} \frac{(t_1^2-1)(t_j^2-1)}{(t_j^2+1)(t_1^2-1) - (t_1^2+1)(t_j^2-1)} \\
& = \frac{1}{4c} \frac{(t_1^2-1)(t_j^2-1)}{(t_1^2 - t_j^2)}. \\
\end{align*}

And, recall from equation \eqref{eqn:bcmdr01} that, for this choice of $t_1$,
$$\frac{\partial}{\partial t_1} F_{0,1}^{\widetilde{M}(b,c)}(t_1) = \frac{8t_1}{(t_1+1)(t_1-1)^3}.$$

Hence, changing variables from $x_i$ to $t_i$ and plugging in for $F_{0,1}^{\widetilde{M}(b,c)}(t_1)$ gives
\begin{align*}
0 & = \bigg( \frac{2}{c} \frac{t_1^2+1}{t_1^2-1} + 2 \frac{\partial t_1}{\partial x_1} \frac{\partial}{\partial t_1} F_{0,1}^{\widetilde{M}(b,c)}(t_1) \bigg) \frac{\partial t_1}{\partial x_1} \frac{\partial}{\partial t_1} F_{0,2}^{\widetilde{M}(b,c)}(t_1,t_2) \\
& \quad + \frac{1}{4c} \frac{(t_1^2-1)(t_2^2-1)}{(t_1^2 - t_2^2)} \bigg[ - \frac{\partial t_1}{\partial x_1} \frac{\partial}{\partial t_1} F_{0,1}^{\widetilde{M}(b,c)}(t_1) + \frac{\partial t_2}{\partial x_2} \frac{\partial}{\partial t_2} F_{0,1}^{\widetilde{M}(b,c)}(t_2) \bigg] \\
& = \bigg( \frac{2}{c} \frac{t_1^2+1}{t_1^2-1} + 2 \frac{(t_1^2-1)^2}{-8ct_1} \frac{8t_1}{(t_1+1)(t_1-1)^3} \bigg) \frac{(t_1^2-1)^2}{-8ct_1} \frac{\partial}{\partial t_1} F_{0,2}^{\widetilde{M}(b,c)}(t_1,t_2) \\
& \quad + \frac{1}{4c} \frac{(t_1^2-1)(t_2^2-1)}{(t_1^2 - t_2^2)} \bigg[ - \frac{(t_1^2-1)^2}{-8ct_1} \frac{8t_1}{(t_1+1)(t_1-1)^3} + \frac{(t_2^2-1)^2}{-8ct_2} \frac{8t_2}{(t_2+1)(t_2-1)^3} \bigg] \\
& = \frac{(t_1^2-1)^2}{-4c^2t_1} \bigg( \frac{(t_1^2+1)}{(t_1-1)(t_1+1)} - \frac{(t_1+1)}{(t_1-1)} \bigg) \frac{\partial}{\partial t_1} F_{0,2}^{\widetilde{M}(b,c)}(t_1,t_2) \\
& \quad + \frac{1}{4c^2} \frac{(t_1^2-1)(t_2^2-1)}{(t_1^2 - t_2^2)} \bigg[ \frac{(t_1^2-1)}{(t_1-1)^2} - \frac{(t_2^2-1)}{(t_2-1)^2} \bigg] \\
& = \frac{(t_1^2-1)^2}{-4c^2t_1} \bigg( \frac{ -2t_1 }{(t_1-1)(t_1+1)} \bigg) \frac{\partial}{\partial t_1} F_{0,2}^{\widetilde{M}(b,c)}(t_1,t_2) \\
& \quad + \frac{1}{4c^2} \frac{(t_1^2-1)(t_2^2-1)}{(t_1^2 - t_2^2)} \bigg[ \frac{(t_1^2-1)}{(t_1-1)^2} - \frac{(t_2^2-1)}{(t_2-1)^2} \bigg] \\
& = \frac{(t_1^2-1)}{2c^2} \frac{\partial}{\partial t_1} F_{0,2}^{\widetilde{M}(b,c)}(t_1,t_2) + \frac{1}{4c^2} \frac{(t_1^2-1)(t_2^2-1)}{(t_1^2 - t_2^2)} \bigg[ \frac{(t_1^2-1)}{(t_1-1)^2} - \frac{(t_2^2-1)}{(t_2-1)^2} \bigg]. \\
\end{align*}

Therefore,
\begin{align*}
\frac{\partial}{\partial t_1} F_{0,2}^{\widetilde{M}(b,c)}(t_1,t_2) & = \frac{-1}{4c^2} \frac{2c^2}{(t_1^2-1)} \frac{(t_1^2-1)(t_2^2-1)}{(t_1^2 - t_2^2)} \bigg[ \frac{(t_1^2-1)}{(t_1-1)^2} - \frac{(t_2^2-1)}{(t_2-1)^2} \bigg] \\
& = \frac{1}{2} \frac{(t_2^2-1)}{(t_1^2 - t_2^2)} \bigg[ \frac{(t_2+1)}{(t_2-1)} - \frac{(t_1+1)}{(t_1-1)} \bigg] \\
& = \frac{1}{2} \frac{(t_2^2-1)}{(t_1^2 - t_2^2)} \bigg[ \frac{2(t_1-t_2)}{(t_1-1)(t_2-1)} \bigg] \\
& = \frac{(t_2+1)}{(t_1-1)(t_1+t_2)}. \\
\end{align*}

Now, returning to the general case, we want to write the sum in $\text{IV}_{g,v}(x_1,x_2,\dots,x_v)$ in such a way that it does not contain any $(g,v)$ or $(g,v-1)$ terms (so that we can combine these terms with the comparable terms occurring elsewhere in the formula). And, after pulling out the $(g,v)$ and $(g,v-1)$ terms from this sum, the remaining terms are precisely the ``stable'' terms, i.e. when $2g_1 + |I| - 1 > 0$ and $2g_2 + |J| - 1 > 0$. 

Thus, we see that
\begin{align*}
& \quad \text{IV}_{g,v}(x_1,x_2,\dots,x_v) \\
& = c^2 \bigg[ \sum_{g_1+g_2=g, \; I \sqcup J = \{2,\dots,v\}, \; \text{stable}} \frac{\partial}{\partial x_1} F_{g_1,|I|+1}^{\widetilde{M}(b,c)}(x_1,x_I) \cdot \frac{\partial}{\partial x_1} F_{g_2,|J|+1}^{\widetilde{M}(b,c)}(x_1,x_J) \\
& \quad + 2 \frac{\partial}{\partial x_1} F_{0,1}^{\widetilde{M}(b,c)}(x_1) \cdot \frac{\partial}{\partial x_1} F_{g,v}^{\widetilde{M}(b,c)}(x_1,x_2,\dots,x_v) \\
& \quad + 2 \sum_{j=2}^v \frac{\partial}{\partial x_1} F_{0,2}^{\widetilde{M}(b,c)}(x_1,x_j) \cdot \frac{\partial}{\partial x_1} F_{g,v-1}^{\widetilde{M}(b,c)}(x_1,x_2,\dots,\widehat{x_j},\dots,x_v) \bigg]. \\
\end{align*}

Therefore, putting all four terms back into the original equation, we obtain
\begin{align*}
& \quad \frac{\partial}{\partial x_1} F_{g,v}^{\widetilde{M}(b,c)}(x_1,x_2,\dots,x_v) \\
& = - \bigg\{ -bx_1^{-1} \frac{\partial}{\partial x_1} F_{g,v}^{\widetilde{M}(b,c)}(x_1,\dots,x_v) \\
& \quad + c^2 \sum_{j=2}^v \frac{1}{x_1(x_j-x_1)} \bigg[ -\frac{\partial}{\partial x_1} F_{g,v-1}^{\widetilde{M}(b,c)}(x_1,\dots,\widehat{x_j},\dots,x_v) \\
& \quad + \frac{\partial}{\partial x_j} F_{g,v-1}^{\widetilde{M}(b,c)}(x_2,\dots,x_v) \bigg] \\
& \quad + c^2 x_1^{-1} \frac{\partial}{\partial u_1} \frac{\partial}{\partial u_2} F_{g-1,v+1}^{\widetilde{M}(b,c)}(u_1,u_2,x_2,\dots,x_v) \bigg|_{u_1=u_2=x_1} \\
& \quad + c^2 x_1^{-1} \bigg[ \sum_{g_1+g_2=g, \; I \sqcup J = \{2,\dots,v\}, \; \text{stable}} \frac{\partial}{\partial x_1} F_{g_1,|I|+1}^{\widetilde{M}(b,c)}(x_1,x_I) \cdot \frac{\partial}{\partial x_1} F_{g_2,|J|+1}^{\widetilde{M}(b,c)}(x_1,x_J) \\
& \quad + 2 \frac{\partial}{\partial x_1} F_{0,1}^{\widetilde{M}(b,c)}(x_1) \cdot \frac{\partial}{\partial x_1} F_{g,v}^{\widetilde{M}(b,c)}(x_1,x_2,\dots,x_v) \\
& \quad + 2 \sum_{j=2}^v \frac{\partial}{\partial x_1} F_{0,2}^{\widetilde{M}(b,c)}(x_1,x_j) \cdot \frac{\partial}{\partial x_1} F_{g,v-1}^{\widetilde{M}(b,c)}(x_1,x_2,\dots,\widehat{x_j},\dots,x_v) \bigg] \bigg\}. \\
\end{align*}

This implies
\begin{align*}
0 & = \bigg( \frac{x_1-b}{c^2} \bigg) \bigg[ \frac{\partial}{\partial x_1} F_{g,v}^{\widetilde{M}(b,c)}(x_1,x_2,\dots,x_v) \bigg] \\
& \quad + \sum_{j=2}^v \frac{1}{(x_j-x_1)} \bigg[ -\frac{\partial}{\partial x_1} F_{g,v-1}^{\widetilde{M}(b,c)}(x_1,\dots,\widehat{x_j},\dots,x_v) + \frac{\partial}{\partial x_j} F_{g,v-1}^{\widetilde{M}(b,c)}(x_2,\dots,x_v) \bigg] \\
& \quad + \frac{\partial}{\partial u_1} \frac{\partial}{\partial u_2} F_{g-1,v+1}^{\widetilde{M}(b,c)}(u_1,u_2,x_2,\dots,x_v) \bigg|_{u_1=u_2=x_1} \\
& \quad + \sum_{g_1+g_2=g, \; I \sqcup J = \{2,\dots,v\}, \; \text{stable}} \frac{\partial}{\partial x_1} F_{g_1,|I|+1}^{\widetilde{M}(b,c)}(x_1,x_I) \cdot \frac{\partial}{\partial x_1} F_{g_2,|J|+1}^{\widetilde{M}(b,c)}(x_1,x_J) \\
& \quad + 2 \frac{\partial}{\partial x_1} F_{0,1}^{\widetilde{M}(b,c)}(x_1) \cdot \frac{\partial}{\partial x_1} F_{g,v}^{\widetilde{M}(b,c)}(x_1,x_2,\dots,x_v) \\
& \quad + 2 \sum_{j=2}^v \frac{\partial}{\partial x_1} F_{0,2}^{\widetilde{M}(b,c)}(x_1,x_j) \cdot \frac{\partial}{\partial x_1} F_{g,v-1}^{\widetilde{M}(b,c)}(x_1,x_2,\dots,\widehat{x_j},\dots,x_v). \\
\end{align*}

And, hence,
\begin{align*}
0 & = \bigg( \frac{x_1-b}{c^2} + 2 \frac{\partial}{\partial x_1} F_{0,1}^{\widetilde{M}(b,c)}(x_1) \bigg) \bigg[ \frac{\partial}{\partial x_1} F_{g,v}^{\widetilde{M}(b,c)}(x_1,x_2,\dots,x_v) \bigg] \\
& \quad + \sum_{j=2}^v \frac{1}{(x_j-x_1)} \bigg[ \frac{\partial}{\partial x_j} F_{g,v-1}^{\widetilde{M}(b,c)}(x_2,\dots,x_v) \bigg] \\
& \quad + \sum_{j=2}^v \bigg( 2 \frac{\partial}{\partial x_1} F_{0,2}^{\widetilde{M}(b,c)}(x_1,x_j) - \frac{1}{(x_j-x_1)} \bigg) \bigg[ \frac{\partial}{\partial x_1} F_{g,v-1}^{\widetilde{M}(b,c)}(x_1,\dots,\widehat{x_j},\dots,x_v) \bigg] \\
& \quad + \frac{\partial}{\partial u_1} \frac{\partial}{\partial u_2} F_{g-1,v+1}^{\widetilde{M}(b,c)}(u_1,u_2,x_2,\dots,x_v) \bigg|_{u_1=u_2=x_1} \\
& \quad + \sum_{g_1+g_2=g, \; I \sqcup J = \{2,\dots,v\}, \; \text{stable}} \frac{\partial}{\partial x_1} F_{g_1,|I|+1}^{\widetilde{M}(b,c)}(x_1,x_I) \cdot \frac{\partial}{\partial x_1} F_{g_2,|J|+1}^{\widetilde{M}(b,c)}(x_1,x_J) \\
\end{align*}

After applying the substitution for $x_i$ in terms of $t_i$ from equation \eqref{eqn:bcmxtsub}, this becomes
\begin{align*}
0 & = \bigg( \frac{2}{c} \frac{t_1^2+1}{t_1^2-1} + 2 \frac{\partial t_1}{\partial x_1} \frac{\partial}{\partial t_1} F_{0,1}^{\widetilde{M}(b,c)}(t_1) \bigg) \bigg[ \frac{\partial t_1}{\partial x_1} \frac{\partial}{\partial t_1} F_{g,v}^{\widetilde{M}(b,c)}(t_1,t_2,\dots,t_v) \bigg] \\
& \quad + \sum_{j=2}^v \bigg( \frac{1}{4c} \frac{(t_1^2-1)(t_j^2-1)}{(t_1^2 - t_j^2)} \bigg) \bigg[ \frac{\partial t_j}{\partial x_j} \frac{\partial}{\partial t_j} F_{g,v-1}^{\widetilde{M}(b,c)}(t_2,\dots,t_v) \bigg] \\
& \quad + \sum_{j=2}^v \bigg( 2 \frac{\partial t_1}{\partial x_1} \frac{\partial}{\partial t_1} F_{0,2}^{\widetilde{M}(b,c)}(t_1,t_j) - \frac{1}{4c} \frac{(t_1^2-1)(t_j^2-1)}{(t_1^2 - t_j^2)} \bigg) \\
& \quad \cdot \bigg[ \frac{\partial t_1}{\partial x_1} \frac{\partial}{\partial t_1} F_{g,v-1}^{\widetilde{M}(b,c)}(t_1,\dots,\widehat{t_j},\dots,t_v) \bigg] \\
& \quad + \bigg( \frac{\partial t_1}{\partial x_1} \bigg)^2 \frac{\partial}{\partial u_1} \frac{\partial}{\partial u_2} F_{g-1,v+1}^{\widetilde{M}(b,c)}(u_1,u_2,t_2,\dots,t_v) \bigg|_{u_1=u_2=t_1} \\
& \quad + \sum_{g_1+g_2=g, \; I \sqcup J = \{2,\dots,v\}, \; \text{stable}} \frac{\partial t_1}{\partial x_1} \frac{\partial}{\partial t_1} F_{g_1,|I|+1}^{\widetilde{M}(b,c)}(t_1,t_I) \cdot \frac{\partial t_1}{\partial x_1} \frac{\partial}{\partial t_1} F_{g_2,|J|+1}^{\widetilde{M}(b,c)}(t_1,t_J) \\
& = \bigg( \frac{2}{c} \frac{t_1^2+1}{t_1^2-1} + 2 \frac{(t_1^2-1)^2}{-8ct_1} \frac{8t_1}{(t_1+1)(t_1-1)^3} \bigg) \bigg[ \frac{(t_1^2-1)^2}{-8ct_1} \frac{\partial}{\partial t_1} F_{g,v}^{\widetilde{M}(b,c)}(t_1,t_2,\dots,t_v) \bigg] \\
& \quad + \sum_{j=2}^v \bigg( \frac{1}{4c} \frac{(t_1^2-1)(t_j^2-1)}{(t_1^2 - t_j^2)} \bigg) \bigg[ \frac{(t_j^2-1)^2}{-8ct_j} \frac{\partial}{\partial t_j} F_{g,v-1}^{\widetilde{M}(b,c)}(t_2,\dots,t_v) \bigg] \\
& \quad + \sum_{j=2}^v \bigg( 2 \frac{(t_1^2-1)^2}{-8ct_1} \frac{(t_j+1)}{(t_1-1)(t_1+t_j)} - \frac{1}{4c} \frac{(t_1^2-1)(t_j^2-1)}{(t_1^2 - t_j^2)} \bigg) \\
& \quad \cdot \bigg[ \frac{(t_1^2-1)^2}{-8ct_1} \frac{\partial}{\partial t_1} F_{g,v-1}^{\widetilde{M}(b,c)}(t_1,\dots,\widehat{t_j},\dots,t_v) \bigg] \\
& \quad + \bigg( \frac{(t_1^2-1)^2}{-8ct_1} \bigg)^2 \frac{\partial}{\partial u_1} \frac{\partial}{\partial u_2} F_{g-1,v+1}^{\widetilde{M}(b,c)}(u_1,u_2,t_2,\dots,t_v) \bigg|_{u_1=u_2=t_1} \\
& \quad + \sum_{g_1+g_2=g, \; I \sqcup J = \{2,\dots,v\}, \; \text{stable}} \frac{(t_1^2-1)^2}{-8ct_1} \frac{\partial}{\partial t_1} F_{g_1,|I|+1}^{\widetilde{M}(b,c)}(t_1,t_I) \cdot \frac{(t_1^2-1)^2}{-8ct_1} \frac{\partial}{\partial t_1} F_{g_2,|J|+1}^{\widetilde{M}(b,c)}(t_1,t_J) \\
& = \frac{(t_1^2-1)^2}{-4c^2t_1} \bigg( \frac{t_1^2+1}{t_1^2-1} - \frac{(t_1^2-1)}{(t_1-1)^2} \bigg) \bigg[ \frac{\partial}{\partial t_1} F_{g,v}^{\widetilde{M}(b,c)}(t_1,t_2,\dots,t_v) \bigg] \\
& \quad - \sum_{j=2}^v \frac{(t_1^2-1)(t_j^2-1)^3}{32 c^2 t_j(t_1^2 - t_j^2)} \bigg[ \frac{\partial}{\partial t_j} F_{g,v-1}^{\widetilde{M}(b,c)}(t_2,\dots,t_v) \bigg] \\
& \quad + \sum_{j=2}^v \frac{(t_1^2-1)^3}{32 c^2 t_1} \bigg( \frac{(t_1+1)(t_j+1)}{t_1(t_1+t_j)} + \frac{(t_j^2-1)}{(t_1^2 - t_j^2)} \bigg) \bigg[ \frac{\partial}{\partial t_1} F_{g,v-1}^{\widetilde{M}(b,c)}(t_1,\dots,\widehat{t_j},\dots,t_v) \bigg] \\
& \quad + \frac{(t_1^2-1)^4}{64 c^2 t_1^2} \frac{\partial}{\partial u_1} \frac{\partial}{\partial u_2} F_{g-1,v+1}^{\widetilde{M}(b,c)}(u_1,u_2,t_2,\dots,t_v) \bigg|_{u_1=u_2=t_1} \\
& \quad + \frac{(t_1^2-1)^4}{64 c^2 t_1^2} \sum_{g_1+g_2=g, \; I \sqcup J = \{2,\dots,v\}, \; \text{stable}} \frac{\partial}{\partial t_1} F_{g_1,|I|+1}^{\widetilde{M}(b,c)}(t_1,t_I) \cdot \frac{\partial}{\partial t_1} F_{g_2,|J|+1}^{\widetilde{M}(b,c)}(t_1,t_J) \\
& = \frac{(t_1^2-1)^2}{-4c^2t_1} \bigg( \frac{(t_1^2+1) - (t_1+1)^2}{(t_1+1)(t_1-1)} \bigg) \bigg[ \frac{\partial}{\partial t_1} F_{g,v}^{\widetilde{M}(b,c)}(t_1,t_2,\dots,t_v) \bigg] \\
& \quad - \sum_{j=2}^v \frac{(t_1^2-1)(t_j^2-1)^3}{32 c^2 t_j(t_1^2 - t_j^2)} \bigg[ \frac{\partial}{\partial t_j} F_{g,v-1}^{\widetilde{M}(b,c)}(t_2,\dots,t_v) \bigg] \\
& \quad + \sum_{j=2}^v \frac{(t_1^2-1)^3}{32 c^2 t_1} \bigg( \frac{t_1+1}{t_1} + \frac{t_j-1}{t_1-t_j} \bigg) \bigg( \frac{t_j+1}{t_1+t_j} \bigg) \bigg[ \frac{\partial}{\partial t_1} F_{g,v-1}^{\widetilde{M}(b,c)}(t_1,\dots,\widehat{t_j},\dots,t_v) \bigg] \\
& \quad + \frac{(t_1^2-1)^4}{64 c^2 t_1^2} \frac{\partial}{\partial u_1} \frac{\partial}{\partial u_2} F_{g-1,v+1}^{\widetilde{M}(b,c)}(u_1,u_2,t_2,\dots,t_v) \bigg|_{u_1=u_2=t_1} \\
& \quad + \frac{(t_1^2-1)^4}{64 c^2 t_1^2} \sum_{g_1+g_2=g, \; I \sqcup J = \{2,\dots,v\}, \; \text{stable}} \frac{\partial}{\partial t_1} F_{g_1,|I|+1}^{\widetilde{M}(b,c)}(t_1,t_I) \cdot \frac{\partial}{\partial t_1} F_{g_2,|J|+1}^{\widetilde{M}(b,c)}(t_1,t_J) \\
& = \frac{-2t_1(t_1^2-1)}{-4c^2t_1} \bigg[ \frac{\partial}{\partial t_1} F_{g,v}^{\widetilde{M}(b,c)}(t_1,t_2,\dots,t_v) \bigg] \\
& \quad - \sum_{j=2}^v \frac{(t_1^2-1)(t_j^2-1)^3}{32 c^2 t_j(t_1^2 - t_j^2)} \bigg[ \frac{\partial}{\partial t_j} F_{g,v-1}^{\widetilde{M}(b,c)}(t_2,\dots,t_v) \bigg] \\
& \quad + \sum_{j=2}^v \frac{(t_1^2-1)^3}{32 c^2 t_1} \bigg( \frac{t_1^2-t_j}{t_1(t_1-t_j)} \bigg) \bigg( \frac{t_j+1}{t_1+t_j} \bigg) \bigg[ \frac{\partial}{\partial t_1} F_{g,v-1}^{\widetilde{M}(b,c)}(t_1,\dots,\widehat{t_j},\dots,t_v) \bigg] \\
& \quad + \frac{(t_1^2-1)^4}{64 c^2 t_1^2} \frac{\partial}{\partial u_1} \frac{\partial}{\partial u_2} F_{g-1,v+1}^{\widetilde{M}(b,c)}(u_1,u_2,t_2,\dots,t_v) \bigg|_{u_1=u_2=t_1} \\
& \quad + \frac{(t_1^2-1)^4}{64 c^2 t_1^2} \sum_{g_1+g_2=g, \; I \sqcup J = \{2,\dots,v\}, \; \text{stable}} \frac{\partial}{\partial t_1} F_{g_1,|I|+1}^{\widetilde{M}(b,c)}(t_1,t_I) \cdot \frac{\partial}{\partial t_1} F_{g_2,|J|+1}^{\widetilde{M}(b,c)}(t_1,t_J) \\
& = \frac{(t_1^2-1)}{2c^2} \bigg[ \frac{\partial}{\partial t_1} F_{g,v}^{\widetilde{M}(b,c)}(t_1,t_2,\dots,t_v) \bigg] \\
& \quad - \sum_{j=2}^v \frac{(t_1^2-1)(t_j^2-1)^3}{32 c^2 t_j(t_1^2 - t_j^2)} \bigg[ \frac{\partial}{\partial t_j} F_{g,v-1}^{\widetilde{M}(b,c)}(t_2,\dots,t_v) \bigg] \\
& \quad + \sum_{j=2}^v \frac{(t_1^2-1)^3}{32 c^2 t_1^2} \bigg( 1 + \frac{(t_1^2-1)t_j}{t_1^2-t_j^2} \bigg) \bigg[ \frac{\partial}{\partial t_1} F_{g,v-1}^{\widetilde{M}(b,c)}(t_1,\dots,\widehat{t_j},\dots,t_v) \bigg] \\
& \quad + \frac{(t_1^2-1)^4}{64 c^2 t_1^2} \frac{\partial}{\partial u_1} \frac{\partial}{\partial u_2} F_{g-1,v+1}^{\widetilde{M}(b,c)}(u_1,u_2,t_2,\dots,t_v) \bigg|_{u_1=u_2=t_1} \\
& \quad + \frac{(t_1^2-1)^4}{64 c^2 t_1^2} \sum_{g_1+g_2=g, \; I \sqcup J = \{2,\dots,v\}, \; \text{stable}} \frac{\partial}{\partial t_1} F_{g_1,|I|+1}^{\widetilde{M}(b,c)}(t_1,t_I) \cdot \frac{\partial}{\partial t_1} F_{g_2,|J|+1}^{\widetilde{M}(b,c)}(t_1,t_J). \\
\end{align*}

Therefore,
\begin{align*}
& \quad \frac{\partial}{\partial t_1} F_{g,v}^{\widetilde{M}(b,c)}(t_1,t_2,\dots,t_v) \\
& = - \frac{2c^2}{(t_1^2-1)} \bigg\{ - \sum_{j=2}^v \frac{(t_1^2-1)(t_j^2-1)^3}{32 c^2 t_j(t_1^2 - t_j^2)} \bigg[ \frac{\partial}{\partial t_j} F_{g,v-1}^{\widetilde{M}(b,c)}(t_2,\dots,t_v) \bigg] \\
& \quad + \sum_{j=2}^v \frac{(t_1^2-1)^3}{32 c^2 t_1^2} \bigg( 1 + \frac{(t_1^2-1)t_j}{t_1^2-t_j^2} \bigg) \bigg[ \frac{\partial}{\partial t_1} F_{g,v-1}^{\widetilde{M}(b,c)}(t_1,\dots,\widehat{t_j},\dots,t_v) \bigg] \\
& \quad + \frac{(t_1^2-1)^4}{64 c^2 t_1^2} \frac{\partial}{\partial u_1} \frac{\partial}{\partial u_2} F_{g-1,v+1}^{\widetilde{M}(b,c)}(u_1,u_2,t_2,\dots,t_v) \bigg|_{u_1=u_2=t_1} \\
& \quad + \frac{(t_1^2-1)^4}{64 c^2 t_1^2} \sum_{g_1+g_2=g, \; I \sqcup J = \{2,\dots,v\}, \; \text{stable}} \frac{\partial}{\partial t_1} F_{g_1,|I|+1}^{\widetilde{M}(b,c)}(t_1,t_I) \cdot \frac{\partial}{\partial t_1} F_{g_2,|J|+1}^{\widetilde{M}(b,c)}(t_1,t_J) \bigg\} \\
& = \sum_{j=2}^v \frac{(t_j^2-1)^3}{16 t_j(t_1^2 - t_j^2)} \bigg[ \frac{\partial}{\partial t_j} F_{g,v-1}^{\widetilde{M}(b,c)}(t_2,\dots,t_v) \bigg] \\
& \quad - \sum_{j=2}^v \bigg[ \frac{(t_1^2-1)^3 t_j}{16 t_1^2 (t_1^2 - t_j^2) } + \frac{(t_1^2-1)^2}{16 t_1^2} \bigg] \bigg[ \frac{\partial}{\partial t_1} F_{g,v-1}^{\widetilde{M}(b,c)}(t_1,\dots,\widehat{t_j},\dots,t_v) \bigg] \\
& \quad - \frac{(t_1^2-1)^3}{32 t_1^2} \frac{\partial}{\partial u_1} \frac{\partial}{\partial u_2} F_{g-1,v+1}^{\widetilde{M}(b,c)}(u_1,u_2,t_2,\dots,t_v) \bigg|_{u_1=u_2=t_1} \\
& \quad - \frac{(t_1^2-1)^3}{32 t_1^2} \sum_{g_1+g_2=g, \; I \sqcup J = \{2,\dots,v\}, \; \text{stable}} \frac{\partial}{\partial t_1} F_{g_1,|I|+1}^{\widetilde{M}(b,c)}(t_1,t_I) \cdot \frac{\partial}{\partial t_1} F_{g_2,|J|+1}^{\widetilde{M}(b,c)}(t_1,t_J). \\
\end{align*}

This completes the proof of Theorem~\ref{thm:bcmdiffrec}.
\end{proof}


\subsection{Detailed Proof of the $bc$-Motzkin Topological Recursion}
\label{subsect:appendixtoprec}

In this section, we provide a detailed proof of the topological recursion for generalized $bc$-Motzkin numbers, which was given in Theorem~\ref{thm:bcmtoprec}.

\begin{proof}

With notation as in Section~\ref{sect:toprecformula}, we have the following computations.

For the first term,
\begin{align*}
& \quad \text{I}_{g,v}(t_1,t_2,\dots,t_v) \\
& = \frac{\partial}{\partial t_2} \cdots \frac{\partial}{\partial t_v} \bigg[ - \frac{1}{16} \sum_{j=2}^v \bigg[ \frac{t_j}{t_1^2 - t_j^2} \bigg( \frac{(t_1^2-1)^3}{t_1^2} \frac{\partial}{\partial t_1} F_{g,v-1}^{\widetilde{M}(b,c)}(t_1,\dots,\widehat{t_j},\dots,t_v) \\
& \quad - \frac{(t_j^2-1)^3}{t_j^2} \frac{\partial}{\partial t_j} F_{g,v-1}^{\widetilde{M}(b,c)}(t_2,\dots,t_v) \bigg) \bigg] dt_1 dt_2 \cdots dt_v \\
& = - \frac{1}{16} \frac{\partial}{\partial t_j} \sum_{j=2}^v \frac{t_j}{t_1^2 - t_j^2} \bigg( \frac{(t_1^2-1)^3}{t_1^2} \frac{\partial}{\partial t_1} \cdots \widehat{\frac{\partial}{\partial t_j}} \cdots \frac{\partial}{\partial t_v}  F_{g,v-1}^{\widetilde{M}(b,c)}(t_1,\dots,\widehat{t_j},\dots,t_v) \\
& \quad - \frac{(t_j^2-1)^3}{t_j^2} \frac{\partial}{\partial t_2} \cdots \frac{\partial}{\partial t_v}  F_{g,v-1}^{\widetilde{M}(b,c)}(t_2,\dots,t_v) \bigg) dt_1 dt_2 \cdots dt_v \\
& = - \frac{1}{16} \frac{\partial}{\partial t_j} \sum_{j=2}^v \frac{t_j}{t_1^2 - t_j^2} \bigg[ \frac{(t_1^2-1)^3}{t_1^2} W_{g,v-1}^{\widetilde{M}(b,c)}(t_1,\dots,\widehat{t_j},\dots,t_v) \; dt_j \\
& \quad - \frac{(t_j^2-1)^3}{t_j^2} W_{g,v-1}^{\widetilde{M}(b,c)}(t_2,\dots,t_v) \; dt_1 \bigg] \\
& = \frac{1}{16} \sum_{j=2}^v \bigg[ \frac{\partial}{\partial t_j} \bigg( \frac{t_j}{t_1^2 - t_j^2} \frac{(t_j^2-1)^3}{t_j^2} W_{g,v-1}^{\widetilde{M}(b,c)}(t_2,\dots,t_v) \bigg) dt_1 \\
& \quad - \frac{(t_1^2 - t_j^2)(1)-t_j(-2t_j)}{(t_1^2 - t_j^2)^2}\frac{(t_1^2-1)^3}{t_1^2} W_{g,v-1}^{\widetilde{M}(b,c)}(t_1,\dots,\widehat{t_j},\dots,t_v) \; dt_j \bigg] \\
& = \frac{1}{16} \sum_{j=2}^v \bigg[ \frac{\partial}{\partial t_j} \bigg( \frac{t_j}{t_1^2 - t_j^2} \frac{(t_j^2-1)^3}{t_j^2} W_{g,v-1}^{\widetilde{M}(b,c)}(t_2,\dots,t_v) \bigg) dt_1 \\
& \quad - \frac{t_1^2 + t_j^2}{(t_1^2 - t_j^2)^2}\frac{(t_1^2-1)^3}{t_1^2} W_{g,v-1}^{\widetilde{M}(b,c)}(t_1,\dots,\widehat{t_j},\dots,t_v) \; dt_j \bigg]. \\
\end{align*}

And, from the topological recursion formula, we have
\begin{align*}
& \quad - \frac{1}{64} \frac{1}{2\pi i} \int_{\gamma} \bigg( \frac{1}{t+t_1} + \frac{1}{t-t_1} \bigg) \frac{(t^2-1)^3}{t^2} \cdot \frac{1}{dt} \cdot dt_1 \\
& \quad \times \sum_{j=2}^v \bigg( W_{0,2}^{\widetilde{M}(b,c)}(t,t_j) W_{g,v-1}^{\widetilde{M}(b,c)}(-t,t_2,\dots,\widehat{t_j},\dots,t_v) \\
& \quad + W_{0,2}^{\widetilde{M}(b,c)}(-t,t_j) W_{g,v-1}^{\widetilde{M}(b,c)}(t,t_2,\dots,\widehat{t_j},\dots,t_v) \bigg) \\
& = - \frac{1}{64} \frac{1}{2\pi i} \int_{\gamma} \bigg( \frac{2(t^2-1)^3/t}{(t+t_1)(t-t_1)} \bigg) \cdot \frac{1}{dt} \cdot dt_1 \\
& \quad \times \sum_{j=2}^v \bigg( \frac{dt \cdot dt_j}{(t-t_j)^2} w_{g,v-1}^{\widetilde{M}(b,c)}(-t,t_2,\dots,\widehat{t_j},\dots,t_v) \; (-dt) \cdot dt_2 \cdots \widehat {dt_j} \cdots dt_v \\
& \quad + \frac{-dt \cdot dt_j}{(-t-t_j)^2} w_{g,v-1}^{\widetilde{M}(b,c)}(t,t_2,\dots,\widehat{t_j},\dots,t_v) \; dt \cdot dt_2 \cdots \widehat {dt_j} \cdots dt_v \bigg) \\
& = - \frac{1}{64} \frac{1}{2\pi i} \int_{\gamma} \bigg( \frac{2(t^2-1)^3/t}{(t+t_1)(t-t_1)} \bigg) \cdot \frac{1}{dt} \cdot dt_1 \\
& \quad \times \sum_{j=2}^v \bigg( -\frac{1}{(t-t_j)^2} w_{g,v-1}^{\widetilde{M}(b,c)}(t,t_2,\dots,\widehat{t_j},\dots,t_v) \\
& \quad - \frac{1}{(t+t_j)^2} w_{g,v-1}^{\widetilde{M}(b,c)}(t,t_2,\dots,\widehat{t_j},\dots,t_v) \bigg) dt \cdot dt \cdot dt_2 \cdots dt_v \\
& = \frac{1}{64} \sum_{j=2}^v \bigg[ \frac{1}{2\pi i} \int_{\gamma} \bigg( \frac{2(t^2-1)^3/t}{(t+t_1)(t-t_1)} \bigg)\bigg( \frac{1}{(t-t_j)^2} + \frac{1}{(t+t_j)^2}  \bigg) \\
& \quad \cdot w_{g,v-1}^{\widetilde{M}(b,c)}(t,t_2,\dots,\widehat{t_j},\dots,t_v) \; dt \bigg] dt_1 \cdot dt_2 \cdots dt_v \\
& = - \frac{1}{16} \sum_{j=2}^v \bigg[ \bigg( \text{Res}_{t=t_1} + \text{Res}_{t=-t_1} + \text{Res}_{t=t_j} + \text{Res}_{t=-t_j} \bigg) \bigg( \frac{(t^2+t_j^2)(t^2-1)^3/t}{(t+t_1)(t-t_1)(t-t_j)^2(t+t_j)^2} \bigg) \\
& \quad \times w_{g,v-1}^{\widetilde{M}(b,c)}(t,t_2,\dots,\widehat{t_j},\dots,t_v) \bigg] dt_1 \cdot dt_2 \cdots dt_v \\
& = - \frac{1}{16} \sum_{j=2}^v \bigg\{ \bigg( \frac{(t_1^2+t_j^2)(t_1^2-1)^3/t_1}{(t_1+t_1)(t_1-t_j)^2(t_1+t_j)^2} \\
& \quad + \frac{(t_1^2+t_j^2)(t_1^2-1)^3/(-t_1)}{(-t_1-t_1)(-t_1-t_j)^2(-t_1+t_j)^2} \bigg) w_{g,v-1}^{\widetilde{M}(b,c)}(t_1,t_2,\dots,\widehat{t_j},\dots,t_v) \\
& \quad + \bigg[ \frac{\partial}{\partial t} \bigg( \frac{(t^2+t_j^2)(t^2-1)^3/t}{(t+t_1)(t-t_1)(t+t_j)^2} \times w_{g,v-1}^{\widetilde{M}(b,c)}(t,t_2,\dots,\widehat{t_j},\dots,t_v) \bigg)\bigg|_{t=t_j} \\
& \quad + \frac{\partial}{\partial t} \bigg( \frac{(t^2+t_j^2)(t^2-1)^3/t}{(t+t_1)(t-t_1)(t-t_j)^2} \times w_{g,v-1}^{\widetilde{M}(b,c)}(t,t_2,\dots,\widehat{t_j},\dots,t_v) \bigg)\bigg|_{t=-t_j} \bigg] \bigg\} dt_1 \cdot dt_2 \cdots dt_v \\
& = - \frac{1}{16} \sum_{j=2}^v \bigg\{ 2 \bigg( \frac{(t_1^2+t_j^2)(t_1^2-1)^3}{2t_1^2(t_1-t_j)^2(t_1+t_j)^2} \bigg) w_{g,v-1}^{\widetilde{M}(b,c)}(t_1,t_2,\dots,\widehat{t_j},\dots,t_v) \\
& \quad + \frac{\partial}{\partial t_j} \bigg( \frac{(t_j^2-1)^3}{t_j(t_j^2-t_1^2)} w_{g,v-1}^{\widetilde{M}(b,c)}(t_2,\dots,t_v) \bigg) \bigg\} dt_1 \cdot dt_2 \cdots dt_v \\
& = - \frac{1}{16} \sum_{j=2}^v \bigg[ \frac{(t_1^2+t_j^2)(t_1^2-1)^3}{t_1^2(t_1^2-t_j^2)^2} W_{g,v-1}^{\widetilde{M}(b,c)}(t_1,t_2,\dots,\widehat{t_j},\dots,t_v) \; dt_j \\
& \quad + \frac{\partial}{\partial t_j} \bigg( \frac{(t_j^2-1)^3}{t_j(t_j^2-t_1^2)} W_{g,v-1}^{\widetilde{M}(b,c)}(t_2,\dots,t_v) \bigg) \; dt_1 \bigg] \\
& = \frac{1}{16} \sum_{j=2}^v \bigg[ \frac{\partial}{\partial t_j} \bigg( \frac{(t_j^2-1)^3}{t_j(t_1^2-t_j^2)} W_{g,v-1}^{\widetilde{M}(b,c)}(t_2,\dots,t_v) \bigg) \; dt_1 \\
& \quad - \frac{(t_1^2+t_j^2)(t_1^2-1)^3}{t_1^2(t_1^2-t_j^2)^2} W_{g,v-1}^{\widetilde{M}(b,c)}(t_1,t_2,\dots,\widehat{t_j},\dots,t_v) \; dt_j \bigg] \\
& = \text{I}_{g,v}(t_1,t_2,\dots,t_v), \\
\end{align*}

where we used that
\begin{align*}
& \quad \frac{\partial}{\partial t} \bigg( \frac{(t^2+t_j^2)(t^2-1)^3/t}{(t+t_1)(t-t_1)(t+t_j)^2} \times w_{g,v-1}^{\widetilde{M}(b,c)}(t,t_2,\dots,\widehat{t_j},\dots,t_v) \bigg)\bigg|_{t=t_j} \\
& = \bigg[ \frac{(t^2-1)^3}{t(t+t_1)(t-t_1)} w_{g,v-1}^{\widetilde{M}(b,c)}(t,t_2,\dots,\widehat{t_j},\dots,t_v) \bigg( \frac{\partial}{\partial t} \frac{(t^2+t_j^2)}{(t+t_j)^2} \bigg) \\
& \quad + \frac{(t^2+t_j^2)}{(t+t_j)^2} \frac{\partial}{\partial t} \bigg( \frac{(t^2-1)^3}{t(t+t_1)(t-t_1)} w_{g,v-1}^{\widetilde{M}(b,c)}(t,t_2,\dots,\widehat{t_j},\dots,t_v) \bigg)\bigg]_{t=t_j} \\
& = \frac{(t_j^2-1)^3}{t_j(t_j+t_1)(t_j-t_1)} w_{g,v-1}^{\widetilde{M}(b,c)}(t_2,\dots,t_v) \bigg( \frac{(t_j+t_j)^2(2t_j) - (t_j^2+t_j^2)2(t_j+t_j)}{(t_j+t_j)^4} \bigg) \\
& \quad + \frac{(t_j^2+t_j^2)}{(t_j+t_j)^2} \frac{\partial}{\partial t_j} \bigg( \frac{(t_j^2-1)^3}{t_j(t_j+t_1)(t_j-t_1)} w_{g,v-1}^{\widetilde{M}(b,c)}(t_2,\dots,t_v) \bigg) \\
& = \frac{(t_j^2-1)^3}{t_j(t_j+t_1)(t_j-t_1)} w_{g,v-1}^{\widetilde{M}(b,c)}(t_2,\dots,t_v) \bigg( \frac{8t_j^3 - 8t_j^3}{(2t_j)^4} \bigg)  \\
& \quad + \frac{2t_j^2}{(2t_j)^2} \frac{\partial}{\partial t_j} \bigg( \frac{(t_j^2-1)^3}{t_j(t_j+t_1)(t_j-t_1)} w_{g,v-1}^{\widetilde{M}(b,c)}(t_2,\dots,t_v) \bigg) \\
& = \frac{1}{2} \frac{\partial}{\partial t_j} \bigg( \frac{(t_j^2-1)^3}{t_j(t_j^2-t_1^2)} w_{g,v-1}^{\widetilde{M}(b,c)}(t_2,\dots,t_v) \bigg) \\
\end{align*}

and
\begin{align*}
& \quad \frac{\partial}{\partial t} \bigg( \frac{(t^2+t_j^2)(t^2-1)^3/t}{(t+t_1)(t-t_1)(t-t_j)^2} \times w_{g,v-1}^{\widetilde{M}(b,c)}(t,t_2,\dots,\widehat{t_j},\dots,t_v) \bigg)\bigg|_{t=-t_j} \\
& = \bigg[ \frac{(t^2-1)^3}{t(t+t_1)(t-t_1)} w_{g,v-1}^{\widetilde{M}(b,c)}(t,t_2,\dots,\widehat{t_j},\dots,t_v) \bigg( \frac{\partial}{\partial t} \frac{(t^2+t_j^2)}{(t-t_j)^2} \bigg) \\
& \quad + \frac{(t^2+t_j^2)}{(t-t_j)^2} \frac{\partial}{\partial t} \bigg( \frac{(t^2-1)^3}{t(t+t_1)(t-t_1)} w_{g,v-1}^{\widetilde{M}(b,c)}(t,t_2,\dots,\widehat{t_j},\dots,t_v) \bigg)\bigg]_{t=-t_j} \\
& = \frac{(t_j^2-1)^3}{-t_j(-t_j+t_1)(-t_j-t_1)} w_{g,v-1}^{\widetilde{M}(b,c)}(t_2,\dots,t_v) \bigg( \frac{(-t_j-t_j)^2(-2t_j) - (t_j^2+t_j^2)2(-t_j-t_j)}{(-t_j-t_j)^4} \bigg) \\
& \quad + \frac{(t_j^2+t_j^2)}{(-t_j-t_j)^2} \bigg( - \frac{\partial}{\partial t_j} \bigg) \bigg( \frac{(t_j^2-1)^3}{-t_j(-t_j+t_1)(-t_j-t_1)} w_{g,v-1}^{\widetilde{M}(b,c)}(t_2,\dots,t_v) \bigg) \\
& = \frac{(t_j^2-1)^3}{-t_j(-t_j+t_1)(-t_j-t_1)} w_{g,v-1}^{\widetilde{M}(b,c)}(t_2,\dots,t_v) \bigg( \frac{-8t_j^3 + 8t_j^3}{(2t_j)^4} \bigg) \\
& \quad + \frac{2t_j^2}{(2t_j)^2} \bigg( - \frac{\partial}{\partial t_j} \bigg) \bigg( \frac{(t_j^2-1)^3}{t_j(-t_j+t_1)(t_j+t_1)} w_{g,v-1}^{\widetilde{M}(b,c)}(t_2,\dots,t_v) \bigg) \\
& = \frac{1}{2} \frac{\partial}{\partial t_j} \bigg( \frac{(t_j^2-1)^3}{t_j(t_j^2-t_1^2)} w_{g,v-1}^{\widetilde{M}(b,c)}(t_2,\dots,t_v) \bigg). \\
\end{align*}

The second term in the differential recursion becomes zero when we differentiate with respect to $t_j$, so we have
$$\text{II}_{g,v}(t_1,t_2,\dots,t_v) = 0.$$

Now, we may compute
\begin{align*}
& \quad - \frac{1}{64} \frac{1}{2\pi i} \int_{\gamma} \bigg( \frac{1}{t+t_1} + \frac{1}{t-t_1} \bigg) \frac{(t^2-1)^3}{t^2} \; dt \\
& = \frac{1}{64} \bigg[ \text{Res}_{t=t_1} \frac{2(t^2-1)^3/t}{(t+t_1)(t-t_1)} + \text{Res}_{t=-t_1} \frac{2(t^2-1)^3/t}{(t+t_1)(t-t_1)} \bigg] \\
& = \frac{1}{64} \bigg[ \frac{2(t_1^2-1)^3/t_1}{2t_1} + \frac{2(t_1^2-1)^3/(-t_1)}{-2t_1} \bigg] \\
& = \frac{1}{32} \frac{(t_1^2-1)^3}{t_1^2}. \\
\end{align*}

Thus, for the third term in the differential recursion, we have
\begin{align*}
& \quad \text{III}_{g,v}(t_1,t_2,\dots,t_v) \\
& = \frac{\partial}{\partial t_2} \cdots \frac{\partial}{\partial t_v} \bigg[ - \frac{1}{32} \frac{(t_1^2-1)^3}{t_1^2} \frac{\partial}{\partial u_1} \frac{\partial}{\partial u_2} F_{g-1,v+1}^{\widetilde{M}(b,c)}(u_1,u_2,t_2,\dots,t_v) \bigg|_{u_1=u_2=t_1}  \bigg] dt_1 dt_2 \cdots dt_v \\
& = - \frac{1}{32} \frac{(t_1^2-1)^3}{t_1^2} W_{g-1,v+1}^{\widetilde{M}(b,c)}(u_1,u_2,t_2,\dots,t_v) \frac{1}{du_1} \frac{1}{du_2} \bigg|_{u_1=u_2=t_1} dt_1 \\
& = - \frac{1}{32} \frac{(t_1^2-1)^3}{t_1^2} W_{g-1,v+1}^{\widetilde{M}(b,c)}(t_1,t_1,t_2,\dots,t_v) \frac{1}{dt_1}. \\
\end{align*}

And, from the topological recursion formula, we have
\begin{align*}
& \quad - \frac{1}{64} \frac{1}{2\pi i} \int_{\gamma} \bigg( \frac{1}{t+t_1} + \frac{1}{t-t_1} \bigg) \frac{(t^2-1)^3}{t^2} \cdot \frac{1}{dt} \cdot dt_1 \times W_{g-1,v+1}^{\widetilde{M}(b,c)}(t,-t,t_2,\dots,t_v) \\
& = - \frac{1}{64} \frac{1}{2\pi i} \int_{\gamma} \bigg( \frac{1}{t+t_1} + \frac{1}{t-t_1} \bigg) \frac{(t^2-1)^3}{t^2} \cdot \frac{1}{dt} \cdot dt_1 \\ 
& \quad \times w_{g-1,v+1}^{\widetilde{M}(b,c)}(t,-t,t_2,\dots,t_v) \; dt \cdot (-dt) \cdot dt_2 \cdots dt_v \\
& = \frac{1}{64} \bigg[ \frac{1}{2\pi i} \int_{\gamma} \bigg( \frac{1}{t+t_1} + \frac{1}{t-t_1} \bigg) \frac{(t^2-1)^3}{t^2} \times w_{g-1,v+1}^{\widetilde{M}(b,c)}(t,t,t_2,\dots,t_v) \; dt \bigg] \; dt_1 \cdot dt_2 \cdots dt_v \\
& = -\frac{1}{32} \frac{(t_1^2-1)^3}{t_1^2} w_{g-1,v+1}^{\widetilde{M}(b,c)}(t_1,t_1,t_2,\dots,t_v) \; dt_1 \cdot dt_2 \cdots dt_v \\
& = -\frac{1}{32} \frac{(t_1^2-1)^3}{t_1^2} W_{g-1,v+1}^{\widetilde{M}(b,c)}(t_1,t_1,t_2,\dots,t_v) \; \frac{1}{dt_1} \\
& =  \text{III}_{g,v}^{\widetilde{M}(b,c)}(t_1,t_2,\dots,t_v). \\
\end{align*}

Finally, for the fourth term, we have
\begin{align*}
& \quad \text{IV}_{g,v}(t_1,t_2,\dots,t_v) \\
& = \frac{\partial}{\partial t_2} \cdots \frac{\partial}{\partial t_v} \bigg[ - \frac{1}{32} \frac{(t_1^2-1)^3}{t_1^2} \\
& \quad \cdot \sum_{g_1+g_2=g, \; I \sqcup J = \{2,\dots,v\}, \; \text{stable}} \frac{\partial}{\partial t_1} F_{g_1,|I|+1}^{\widetilde{M}(b,c)}(t_1,t_I) \cdot \frac{\partial}{\partial t_1} F_{g_2,|J|+1}^{\widetilde{M}(b,c)}(t_1,t_J) \bigg] dt_1 dt_2 \cdots dt_v \\
& = - \frac{1}{32} \frac{(t_1^2-1)^3}{t_1^2} \sum_{g_1+g_2=g, \; I \sqcup J = \{2,\dots,v\}, \; \text{stable}} \frac{\partial}{\partial t_1} \frac{\partial}{\partial t_I} F_{g_1,|I|+1}^{\widetilde{M}(b,c)}(t_1,t_I) \; dt_1 dt_I \\
& \quad \cdot \frac{\partial}{\partial t_1} \frac{\partial}{\partial t_J} F_{g_2,|J|+1}^{\widetilde{M}(b,c)}(t_1,t_J) dt_1 dt_J \frac{1}{dt_1} \\
& = - \frac{1}{32} \frac{(t_1^2-1)^3}{t_1^2} \sum_{g_1+g_2=g, \; I \sqcup J = \{2,\dots,v\}, \; \text{stable}} W_{g_1,|I|+1}^{\widetilde{M}(b,c)}(t_1,t_I) \cdot W_{g_2,|J|+1}^{\widetilde{M}(b,c)}(t_1,t_J) \cdot \frac{1}{dt_1}. \\
\end{align*}

And, from the topological recursion formula, we have
\begin{align*}
& \quad - \frac{1}{64} \frac{1}{2\pi i} \int_{\gamma} \bigg( \frac{1}{t+t_1} + \frac{1}{t-t_1} \bigg) \frac{(t^2-1)^3}{t^2} \cdot \frac{1}{dt} \cdot dt_1 \\
& \quad \times \bigg[ \sum_{g_1+g_2=g, \; I \sqcup J = \{2,\dots,v\}, \; \text{stable}} \frac{\partial}{\partial t_1} W_{g_1,|I|+1}^{\widetilde{M}(b,c)}(t,t_I) W_{g_2,|J|+1}^{\widetilde{M}(b,c)}(-t,t_J) \bigg] \\
& = - \frac{1}{64} \frac{1}{2\pi i} \int_{\gamma} \bigg( \frac{1}{t+t_1} + \frac{1}{t-t_1} \bigg) \frac{(t^2-1)^3}{t^2} \cdot \frac{1}{dt} \cdot dt_1 \\
& \quad \times\sum_{g_1+g_2=g, \; I \sqcup J = \{2,\dots,v\}, \; \text{stable}}  \bigg[ w_{g_1,|I|+1}^D(t,t_I) \; dt \cdot \prod_{i \in I} dt_i \bigg] \bigg[ w_{g_2,|J|+1}^D(-t,t_J) \; (-dt)  \cdot \prod_{j \in J} dt_j \bigg] \\
& = \frac{1}{64} \frac{1}{2\pi i} \int_{\gamma} \bigg( \frac{1}{t+t_1} + \frac{1}{t-t_1} \bigg) \frac{(t^2-1)^3}{t^2} \\
& \quad \times\sum_{g_1+g_2=g, \; I \sqcup J = \{2,\dots,v\}, \; \text{stable}} w_{g_1,|I|+1}^D(t,t_I) w_{g_2,|J|+1}^D(t,t_J) \; dt \cdot dt_1 \cdot dt_2 \cdots dt_v \\
& = - \frac{1}{32} \frac{(t_1^2-1)^3}{t_1^2} \sum_{g_1+g_2=g, \; I \sqcup J = \{2,\dots,v\}, \; \text{stable}} w_{g_1,|I|+1}^D(t_1,t_I) w_{g_2,|J|+1}^D(t_1,t_J) \; dt_1 \cdot dt_2 \cdots dt_v \\
& = - \frac{1}{32} \frac{(t_1^2-1)^3}{t_1^2} \sum_{g_1+g_2=g, \; I \sqcup J = \{2,\dots,v\}, \; \text{stable}} W_{g_1,|I|+1}^D(t_1,t_I) W_{g_2,|J|+1}^D(t_1,t_J) \; \frac{1}{dt_1} \\
& = \text{IV}_{g,v}(t_1,t_2,\dots,t_v). \\
\end{align*}

This completes the proof of the theorem.
\end{proof}


\begin{ack}

The author would like to thank her thesis advisor, Motohico Mulase, for many helpful conversations about this project. She would also like to thank the UC Davis mathematics department for their financial support during her time studying there as a graduate student, as well as the generous donors of the Schwarze Scholarship in mathematics, whose funding helped to make the completion of this project possible. The author is also very grateful to the many folks who have helped her to improve her problem-solving and critical thinking skills. Without their help and advice, this project would likely have not been completed.

\end{ack}



\providecommand{\bysame}{\leavevmode\hbox to3em{\hrulefill}\thinspace}

\bibliographystyle{amsplain}


\end{document}